\theoremstyle{definition}
\newtheorem{definition}{Definition}[section]
\theoremstyle{remark}
\newtheoremstyle{estilo}{0mm}{0mm}{\slshape}{}{\bfseries}{.}{ }{}
\theoremstyle{theorem}
\newtheorem{theorem}[definition]{Theorem}
\newtheorem{corollary}[definition]{Corollary}
\newtheorem{lemma}[definition]{Lemma}
\newtheoremstyle{mystyle}{2mm}{0mm}{}{}{\bfseries}{}{ }{\thmnumber{#2}.\thmnote{ #3}}
\theoremstyle{mystyle}
\newtheorem{fact}[definition]{}
\newtheoremstyle{myremark}{2mm}{0mm}{}{}{\bfseries}{}{ }{\thmname{#1} \thmnumber{#2}. \thmnote{ #3}}
\theoremstyle{myremark}
\DeclareMathOperator{\Id}{Id}
\DeclareMathOperator{\Ch}{Ch}
\DeclareMathOperator{\End}{End}
\DeclareMathOperator{\Hom}{Hom}
\DeclareMathOperator{\Mod}{Mod}
\DeclareMathOperator{\Tr}{tr}\DeclareMathOperator{\ev}{ev}
\email{}
\thanks{}
\begin{document}
\title{On De Rham Cohomology of Linear Categories}
\author{Andrei Chite\c{s}, M\u{a}d\u{a}lin Ciungu and Drago\c s \c Stefan}
\address{University of Bucharest, Faculty of Mathematics and Computer
Science, 14 Academiei Street, Bucharest Ro-010014, Romania}
\email{andrei.chites@pointlogistix.ro}
\email{madalinciungu@yahoo.com}
\email{dragos.stefan@fmi.unibuc.ro}
\subjclass[2000]{Primary 18G60; Secondary 13D15}

\begin{abstract}
We define the Chern map from the Grothendieck group of a linear category $%
\mathcal{C}$ to the de Rham cohomology of $\mathcal{C}$ with coefficients in
a $DG$-category $\Omega^{\ast}$. In order to achieve our goal, we define the
notion of connection on a $\mathcal{C}$-module, and we show that the trace
of the curvature of a connection is a de Rham cocycle, whose cohomology
class does not depend on the choice of the connection.
\end{abstract}

\keywords{linear category; de Rham cohomology; Chern map}
\maketitle

\section*{Introduction}

The classical Chern character is a morphism from the $K_{0}$-theory group of
a manifold to its de Rham cohomology groups. In his work on the foliations
of a manifold, Alain Connes extended the construction of the Chern character
to show that there is a pairing between the Grothendieck group $K_{0}(A)$ of
the Banach algebra $A$ associated to a foliation and the cyclic cohomology $%
A $, which replaces de Rham cohomology in Noncommutative Geometry ; see \cite%
{Co}. In \cite{Ka}, the definition of the Chern map was extended to $%
K_{n}(A),$ the higher $K$-theory groups of $A.$ The method used in loc. cit.
is purely algebraic, working for any finitely generated projective module
over a, not necessarily commutative, algebra $A$. See also \cite[Chapter 8]%
{Lo}.

Linear categories were defined and investigated in the seminal paper \cite%
{Mi}. Since their appearance, they have been used as a very important tool
not only in algebra, but in many other fields, including algebraic topology,
logic, computer science, etc.

In this paper, proceeding as in \cite{Ka}, we define a Chern map from the
Grothendieck group $K_{0}(\mathcal{C})$ of a linear category $\mathcal{C}$
to the de Rham cohomology of $\mathcal{C}.$ For, we recall the definition of
linear categories and their basic properties in the first section of the
paper. In this part, we also recall some homological properties of the
category of (right) modules over a linear category, and show that the
Hattori-Stallings trace map can be defined for any finitely generated
projective module over a linear category.

In the second section we deal with connections on a finitely generated right 
$\mathcal{C}$-module $M$. As an example, we introduce the Levi-Civita
connection on a finitely generated projective module. We are also able to
describe all connections on a finitely generated free $\mathcal{C}$-module.
In the last part of this section, we define the curvature of a connection
and we compute it for the above mentioned examples of connections.

In the third section we define the de Rham cohomology of a linear category $%
\mathcal{C}$, with respect to a $DG$-category $\Omega ^{\ast }$ such that $%
\Omega ^{0}=\mathcal{C}.$ We prove that the trace of the curvature of a
connection $\nabla$ on a finitely generated projective $\mathcal{C}$-module $%
M$ is a de Rham $2q$-cocycle $\Ch^{q}(M,\nabla ).$ We also show that the de
Rham cohomology class of $\Ch^{q}(M,\nabla )$ does not depend on the choice
of the connection $\nabla $ on $M.$ We shall denote this cohomology class by 
$\Ch^{q}(M)$. In particular we deduce our main result, stating that $%
[M]\mapsto \Ch^q(M)$ defines a morphism from the Grothendieck group $K_{0}(%
\mathcal{C})$ of $\mathcal{C}$ to the de Rham cohomology of $\mathcal{C}$
with coefficients in $\Omega ^{\ast }.$

\section{Preliminaries}

In this section we recall the definitions of the notions that we work with.
Throughout, $\Bbbk $ will denote a commutative field. The tensor product
over $\Bbbk $ will be denoted by $\otimes .$

\begin{fact}[$\Bbbk $-linear categories.]
A category $\mathcal{C}$\ is called $\Bbbk $-linear if every hom-set in $%
\mathcal{C}$\ is a $\Bbbk $-linear space and the composition maps in $%
\mathcal{C}$\ are bilinear. We shall denote the space of morphisms from $x$
to $y$ by $_{y}\mathcal{C}_{x}$, so the composition map can be seen as a
linear transformation from $_{x}\mathcal{C}_{y}\otimes {}_{y}\mathcal{C}%
_{z}\ $to ${}_{x}\mathcal{C}_{z},$ for any objects $x$, $y$ and $z$ in $%
\mathcal{C}$. In this paper $\mathcal{C}$ will always denote a small linear
category. Thus $\mathcal{C}_{0},$ the class of objects in $\mathcal{C},$ is
a set.

A functor $F:\mathcal{C}\longrightarrow \mathcal{D}$ between linear
categories is $\Bbbk $-linear if the corresponding map from ${}_{x}\mathcal{C%
}_{y}$ to ${}_{F(x)}\mathcal{D}_{F(y)}$ is $\Bbbk $-linear, for any objects $%
x$ and $y$ in $\mathcal{C}_{0}.$
\end{fact}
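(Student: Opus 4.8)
The statement is essentially a definition, reinforced by the single observation that specifying a $\Bbbk$-bilinear composition law is the same as specifying a $\Bbbk$-linear map out of a tensor product; accordingly the plan is not to prove anything deep but to make that identification precise and to record the identities it is forced to satisfy.

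First I would fix the variance convention once and for all, writing ${}_{y}\mathcal{C}_{x}=\Hom_{\mathcal{C}}(x,y)$ as in the statement, so that for each triple of objects the composition is a map ${}_{x}\mathcal{C}_{y}\times{}_{y}\mathcal{C}_{z}\to{}_{x}\mathcal{C}_{z}$, which is $\Bbbk$-bilinear in its two arguments by hypothesis. By the universal property of $\otimes=\otimes_{\Bbbk}$, each such bilinear map factors uniquely through a $\Bbbk$-linear map $m_{x,y,z}\colon{}_{x}\mathcal{C}_{y}\otimes{}_{y}\mathcal{C}_{z}\to{}_{x}\mathcal{C}_{z}$ with $m_{x,y,z}(g\otimes f)=g\circ f$ for $g\in{}_{x}\mathcal{C}_{y}$ and $f\in{}_{y}\mathcal{C}_{z}$. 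I would then translate associativity of composition into the commutativity, for all objects $x,y,z,t$, of the square on ${}_{x}\mathcal{C}_{y}\otimes{}_{y}\mathcal{C}_{z}\otimes{}_{z}\mathcal{C}_{t}$ with sides $m_{x,y,z}\otimes\Id$ followed by $m_{x,z,t}$ versus $\Id\otimes m_{y,z,t}$ followed by $m_{x,y,t}$, and translate the unit axiom into $m_{x,x,y}(\Id_{x}\otimes-)=\Id_{{}_{x}\mathcal{C}_{y}}=m_{x,y,y}(-\otimes\Id_{y})$ after the canonical identifications $\Bbbk\otimes{}_{x}\mathcal{C}_{y}\cong{}_{x}\mathcal{C}_{y}\cong{}_{x}\mathcal{C}_{y}\otimes\Bbbk$. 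For the second half, a functor $F\colon\mathcal{C}\to\mathcal{D}$ is $\Bbbk$-linear exactly when every map $F_{x,y}\colon{}_{x}\mathcal{C}_{y}\to{}_{F(x)}\mathcal{D}_{F(y)}$ is $\Bbbk$-linear, and I would note this is compatible with the tensor description, namely $F_{x,z}\circ m^{\mathcal{C}}_{x,y,z}=m^{\mathcal{D}}_{F(x),F(y),F(z)}\circ(F_{x,y}\otimes F_{y,z})$ and $F_{x,x}(\Id_{x})=\Id_{F(x)}$.

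There is no genuine obstacle in this argument: the only thing demanding care is keeping the variance bookkeeping consistent, that is, remembering which factor of a tensor product carries the ``outer'' hom-space, and observing that all structure maps in sight are $\Bbbk$-multilinear so that they descend unambiguously to iterated tensor products such as ${}_{x}\mathcal{C}_{y}\otimes{}_{y}\mathcal{C}_{z}\otimes{}_{z}\mathcal{C}_{t}$. I would flag the latter point explicitly, since iterated hom-tensors over chains of objects are precisely the raw material from which the module category, the bar-type resolutions, and the $DG$-category $\Omega^{\ast}$ of the later sections are built.
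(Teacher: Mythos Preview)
Your proposal is correct, and there is nothing to compare against: in the paper this ``fact'' is a pure definition with no accompanying proof, and the single substantive claim (that a $\Bbbk$-bilinear composition is the same datum as a linear map out of the tensor product) is exactly the universal property you invoke. Your additional unpacking of associativity, units, and functoriality in tensor language is accurate and harmless, though the paper itself does not spell any of it out.
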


\begin{fact}[Examples of $\Bbbk $-linear categories.]
Of course the category of $\Bbbk $-linear spaces is the prototype of $\Bbbk $%
-linear categories. Other examples are listed below.

\begin{enumerate}
\item Let $\mathcal{C}$\ be a given $\Bbbk $-linear category. The opposite
category of $\mathcal{C}$ is also a linear category. Recall that $\mathcal{C}
$\ and $\mathcal{C}^{op}$\ have the same objects, while $_{x}(\mathcal{C}%
^{op})_{y}={}_{y}\mathcal{C}_{x}$. If we denote the composition in $\mathcal{%
C}^{op}$ by $\bullet $, then $f\bullet g=g\circ f$\ \ for any $g\in {}_{x}%
\mathcal{C}_{y}$\ and $f\in {}_{y}\mathcal{C}_{z}$.

\item Let $\mathcal{C}$\ and $\mathcal{D}$\ be two $\Bbbk $-linear
categories. The tensor product $\mathcal{C\boxtimes D}$\ is the $\Bbbk $%
-linear category:

\begin{enumerate}
\item With the set of objects $\mathcal{(C\boxtimes D)}_{0}=\mathcal{C}_{0}%
\mathcal{\times D}_{0}$;

\item The space of morphisms $_{(x,y)}\mathcal{(C\boxtimes D)}_{(x^{\prime
},y^{\prime })}={}_{x}\mathcal{C}_{x^{\prime }}\otimes {}_{y}\mathcal{D}%
_{y^{\prime }}$, for any $(x,y)$ and $(x^{\prime },y^{\prime })$ in $%
\mathcal{C}_{0}\times \mathcal{D}_{0}$.
\end{enumerate}

The composition in $\mathcal{C\boxtimes D}$\ is defined by%
\begin{equation*}
(f\otimes g)\circ \ (f^{\prime }\otimes g^{\prime })=(f\circ f^{\prime
})\otimes (g\circ g^{\prime }),
\end{equation*}%
for any $f\otimes g\in {}_{x}\mathcal{C}_{y}\otimes {}_{x^{\prime }}\mathcal{%
D}_{y^{\prime }},$ and $f^{\prime }\otimes g^{\prime }\in {}_{y}\mathcal{C}%
_{z}\otimes {y^{\prime }}\mathcal{D}_{z^{\prime }}$. In particular, for a
linear category $\mathcal{C}$, one defines its enveloping category by $%
\mathcal{C}^{e}:=\mathcal{C\boxtimes C}^{op}$.

\item The category of left $\mathcal{C}$-modules is a $\Bbbk $-linear
category. By definition, a left $\mathcal{C}$-module is a covariant linear
functor from $\mathcal{C}$ to $\Bbbk $-$\Mod$. It is easy to see that a $%
\mathcal{C}$-module $M$ is a family $\{{}_{x}M\}_{x\in \mathcal{C}_{0}}$ of
linear spaces, together with maps $\cdot :{}_{x}\mathcal{C}_{y}\otimes
{}_{y}M\longrightarrow {}_{x}M$ such that 
\begin{equation*}
1_{z}\cdot m=m\qquad \text{and\qquad }f\cdot (g\cdot m)=(f\circ g)\cdot m,
\end{equation*}%
for any $m\in {}_{z}M$ and for any $f\in {}_{x}\mathcal{C}_{y}$ and $g\in
{}_{y}\mathcal{C}_{z}$. A morphism of $\mathcal{C}$-modules $u:M\rightarrow
N $ is a natural transformation between the corresponding functors.
Equivalently, a morphism $u$ as above is given by a family $\{_{x}u\}_{x\in 
\mathcal{C}_{0}}$ of $\Bbbk $-linear maps $_{x}u:{}_{x}M\rightarrow {}_{x}N$
such that, for all $f\in {}_{y}\mathcal{C}_{x}$\ and $m\in {}_{x}{M}$ we have%
\begin{equation*}
_{y}u(f\cdot m)=f\cdot {}_{x}u(m).
\end{equation*}%
The category $\Mod$-$\mathcal{C}$ of right $\mathcal{C}$-modules is defined
similarly.

\item The category $\mathcal{C}$-$\Mod$-$\mathcal{C}$\ of $\mathcal{C}$%
-bimodules will play an important role in our paper. By definition, a $%
\mathcal{C}$-bimodule is a left $\mathcal{C\boxtimes C}^{op}$-module.
Therefore, a bimodule $M$\ is a family $\{_{x}M_{y}\}_{x,y\in \mathcal{C}%
_{0}}$, endowed with maps $\cdot :{}_{x}\mathcal{C}_{y}\otimes
{}_{y}M_{z}\longrightarrow {}_{x}M_{z}$ and $\cdot :{}_{x}M_{y}\otimes {}_{y}%
\mathcal{C}_{z}\longrightarrow {}_{x}M_{z}.$ By definition, for any $y\in 
\mathcal{C}_{0},$ the family $_{\bullet }M_{y}:=\{_{x}M_{y}\}_{x\in \mathcal{%
C}_{0}}$ is a left module with respect to the former maps. Similarly, for
any $x\in \mathcal{C}_{0}$ the family $_{x}M_{\bullet }:=\{_{x}M_{y}\}_{y\in 
\mathcal{C}_{0}}$ is a right module with respect to the latter maps. In
addition, these module structures are compatible in the sense that, for any $%
m\in {}_{y}{M}_{z},$ and any morphisms $f\in {}_{x}\mathcal{C}_{y}$ and $%
g\in {}_{z}\mathcal{C}_{t},$ we have%
\begin{equation*}
(f\cdot m)\cdot g=f\cdot (m\cdot g).
\end{equation*}%
A morphism of bimodules $u:M\rightarrow N$ is given by a family $%
\{_{x}u_{y}\}_{x,y\in \mathcal{C}_{0}}$ of $\Bbbk $-linear transformations $%
_{x}u_{y}:{}_{x}M_{y}\longrightarrow {}_{x}N_{y}$, which for $m,$ $f$ and $g$
as above verify the relation%
\begin{equation*}
_{x}u_{t}(f\cdot m\cdot g)=f\cdot {}_{y}u_{z}(m)\cdot g.
\end{equation*}
\end{enumerate}

It is well-known that $\mathcal{C}$-$\Mod$ has enough injective objects, for
any linear category $\mathcal{C}$. In $\mathcal{C}$-$\Mod$ there are enough
projective objects as well. Clearly, $\Mod$-$\mathcal{C}$ and $\mathcal{C}$-$%
\Mod$-$\mathcal{C}$ also have the above properties, since they can be
regarded as categories of left modules over $\mathcal{C}^{op}$ and $\mathcal{%
C\boxtimes C}^{op},$ respectively.
\end{fact}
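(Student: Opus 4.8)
The plan is to verify, case by case, that each of the four constructions produces a category whose hom-sets are $\Bbbk$-linear spaces and whose composition maps are $\Bbbk$-bilinear; once this is done, the associativity and unit axioms follow by routine transcription of the corresponding axioms already available in $\mathcal{C}$, $\mathcal{D}$, and $\Bbbk$-$\Mod$.

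First, for the opposite category $\mathcal{C}^{op}$, I would note that the hom-spaces $_{x}(\mathcal{C}^{op})_{y}={}_{y}\mathcal{C}_{x}$ are, by definition, the very hom-spaces of $\mathcal{C}$, hence are already $\Bbbk$-linear. Since the opposite composition is $f\bullet g=g\circ f$ and $\circ$ is bilinear, $\bullet$ is bilinear as well; that the morphisms $1_{x}$ of $\mathcal{C}$ serve as identities for $\bullet$, and that $\bullet$ is associative, are then immediate from the corresponding properties of $\circ$. Next, for the tensor product $\mathcal{C}\boxtimes\mathcal{D}$, the hom-spaces ${}_{x}\mathcal{C}_{y}\otimes{}_{x'}\mathcal{D}_{y'}$ are $\Bbbk$-spaces by construction, and the unit of the object $(x,x')$ is $1_{x}\otimes 1_{x'}$; the one point needing care is the well-definedness of composition, which I address below.

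For the category of left $\mathcal{C}$-modules, I would equip each hom-set $\Hom(M,N)$, consisting of the natural transformations $u=\{{}_{x}u\}_{x\in\mathcal{C}_{0}}$, with the pointwise structure given by $_{x}(au+bu')=a\,{}_{x}u+b\,{}_{x}u'$ for $a,b\in\Bbbk$; the essential check here is that this pointwise combination still satisfies the compatibility relation $_{y}(au+bu')(f\cdot m)=f\cdot{}_{x}(au+bu')(m)$, which follows at once from the $\Bbbk$-bilinearity of the action maps $\cdot:{}_{y}\mathcal{C}_{x}\otimes{}_{x}M\to{}_{y}M$. Composition of module morphisms is levelwise composition of the underlying linear maps, which is visibly bilinear, and the levelwise identities furnish the identity morphisms. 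The bimodule category $\mathcal{C}$-$\Mod$-$\mathcal{C}$ then requires no separate argument: by definition it is the category of left $\mathcal{C}\boxtimes\mathcal{C}^{op}$-modules, so cases (1), (2), and (3) already cover it.

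The only step demanding genuine attention is the well-definedness of composition in $\mathcal{C}\boxtimes\mathcal{D}$. Here the prescription $(f\otimes g)\circ(f'\otimes g')=(f\circ f')\otimes(g\circ g')$ is given only on elementary tensors, so I would define the assignment $(f,g,f',g')\mapsto(f\circ f')\otimes(g\circ g')$, with $f\in{}_{x}\mathcal{C}_{y}$, $g\in{}_{x'}\mathcal{D}_{y'}$, $f'\in{}_{y}\mathcal{C}_{z}$, $g'\in{}_{y'}\mathcal{D}_{z'}$, and observe that it is $\Bbbk$-multilinear in each of its four arguments precisely because $\circ$ is bilinear in both $\mathcal{C}$ and $\mathcal{D}$. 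By the universal property of the tensor product it therefore factors through $({}_{x}\mathcal{C}_{y}\otimes{}_{x'}\mathcal{D}_{y'})\otimes({}_{y}\mathcal{C}_{z}\otimes{}_{y'}\mathcal{D}_{z'})$, yielding the desired bilinear composition ${}_{x}\mathcal{C}_{z}\otimes{}_{x'}\mathcal{D}_{z'}$ as target. Associativity and the unit law are then verified on elementary tensors and extended by linearity. Everything else in the Fact is a direct reading-off of the module and bimodule axioms stated above, so no further obstacle arises.
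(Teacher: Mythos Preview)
Your verifications are correct, but the paper provides no proof for this Fact: it is presented purely as a list of definitions and examples, with the $\Bbbk$-linearity of each construction taken as self-evident. Your argument is thus more detailed than anything the authors offer; in particular, your careful treatment of the well-definedness of composition in $\mathcal{C}\boxtimes\mathcal{D}$ via the universal property of the tensor product is a point the paper simply passes over in silence.
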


\begin{fact}[The tensor product of two (bi)modules.]
Let $\mathcal{C}$ be a linear category. We assume that ${M}$ is a right $%
\mathcal{C}$-module and that $N$ is a left $\mathcal{C}$-module. The vector
space${\ M}\otimes _{\mathcal{C}}N$ is defined as the quotient of $%
\oplus_{z\in \mathcal{C}_{0}}(M_{z}\otimes {}_{z}N)$ through the linear
subspace generated by the elements $m\cdot f\otimes n-m\otimes f\cdot n,$
for arbitrary $m\in {}M_{u},$ $f\in {}_{u}\mathcal{C}_{v}$ and $n\in
{}_{v}N. $ The class of $m\otimes n\in {}M_{u}\otimes {}_{u}N$ in the
quotient linear space ${M}\otimes _{\mathcal{C}}N$ will be denoted by $%
m\otimes _{\mathcal{C}}n.$ These tensor monomials generate ${M}\otimes _{%
\mathcal{C}}N$ as a vector space.

Now one can define the tensor product of two bimodules $X$ and $Y$ as
follows. For $x$ and $y$ in $\mathcal{C}_{0}$ we know that $_{x}X_{\bullet }$
is a a right module and that $_{\bullet }Y_{y}$ is left module. Thus the
tensor product $_{x}{X}_{\bullet }\otimes _{\mathcal{C}}{}_{\bullet }Y_{y}$\
makes sense. By definition $X\otimes _{\mathcal{C}}Y$ is the bimodule whose
components are the vector spaces%
\begin{equation*}
_{x}({X}\otimes _{\mathcal{C}}Y)_{y}:={}{X}_{\bullet }\otimes _{\mathcal{C}%
}{}_{\bullet }Y_{y}.
\end{equation*}%
The bimodule structure on ${X}\otimes _{\mathcal{C}}Y$ is induced by the
left action on ${X}$ and the right action on $Y$.

Note that the category $\mathcal{C}$-$\Mod$-$\mathcal{C}$ is a monoidal
category with respect to the tensor product of $\mathcal{C}$-bimodules. Its
unit object is the bimodule $\mathcal{C}$. In $\mathcal{C}$-$\Mod$-$\mathcal{%
C}$ there are arbitrary coproducts, and it is easy to see that the tensor
product $\otimes _{\mathcal{C}}$ is distributive over coproducts.
\end{fact}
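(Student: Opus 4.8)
The plan is to establish, in turn, the four assertions above: that $\otimes_{\mathcal{C}}$ makes $\mathcal{C}$-$\Mod$-$\mathcal{C}$ a monoidal category, that $\mathcal{C}$ is its unit object, that arbitrary coproducts exist, and that $\otimes_{\mathcal{C}}$ is distributive over them. Each is the classical statement for bimodules over a ring, transported here by carrying out the argument componentwise over $\mathcal{C}_{0}$; the only genuine work is keeping the two-sided index bookkeeping straight. I would begin with coproducts, since they are needed in order to phrase distributivity. As $\mathcal{C}$-$\Mod$-$\mathcal{C}$ is the category of $\Bbbk$-linear functors $\mathcal{C}^{e}\to\Bbbk$-$\Mod$ and $\Bbbk$-$\Mod$ is cocomplete, coproducts are computed pointwise: for a family $\{X_{i}\}_{i\in I}$ of bimodules one sets ${}_{x}\big(\bigoplus_{i}X_{i}\big)_{y}:=\bigoplus_{i}\,{}_{x}(X_{i})_{y}$, lets the left and right $\mathcal{C}$-actions act summandwise, and checks the universal property one component at a time.

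Next I would treat distributivity. Fixing a family $\{X_{i}\}_{i\in I}$ and a bimodule $Y$, unwinding the definition of $\otimes_{\mathcal{C}}$ shows that ${}_{x}\big((\bigoplus_{i}X_{i})\otimes_{\mathcal{C}}Y\big)_{y}$ is the quotient of $\bigoplus_{z\in\mathcal{C}_{0}}\big(\bigoplus_{i}{}_{x}(X_{i})_{z}\big)\otimes{}_{z}Y_{y}\cong\bigoplus_{i}\bigoplus_{z}\big({}_{x}(X_{i})_{z}\otimes{}_{z}Y_{y}\big)$ by the subspace generated by the elements $m\cdot f\otimes n-m\otimes f\cdot n$. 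Since $\otimes_{\Bbbk}$ and the passage to this quotient both commute with the $I$-indexed direct sum, the quotient decomposes as $\bigoplus_{i}\,{}_{x}(X_{i}\otimes_{\mathcal{C}}Y)_{y}$; the resulting map is an isomorphism of bimodules, natural in $Y$, and the version with the variable argument on the right is identical. This is precisely what makes the claim ``easy to see'': the relations defining $\otimes_{\mathcal{C}}$ are stable under direct sums.

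For the monoidal structure I would write down the coherence data explicitly on tensor monomials. The associativity constraint $a_{X,Y,Z}\colon(X\otimes_{\mathcal{C}}Y)\otimes_{\mathcal{C}}Z\to X\otimes_{\mathcal{C}}(Y\otimes_{\mathcal{C}}Z)$ sends $(p\otimes_{\mathcal{C}}q)\otimes_{\mathcal{C}}r$ to $p\otimes_{\mathcal{C}}(q\otimes_{\mathcal{C}}r)$; one checks that it is well defined (the relations defining source and target match up summandwise over the intermediate objects), $\Bbbk$-linear, compatible with the outer left action on $X$ and right action on $Z$, and inverted by the analogous formula. The unit constraints $\lambda_{Y}\colon\mathcal{C}\otimes_{\mathcal{C}}Y\to Y$ and $\rho_{X}\colon X\otimes_{\mathcal{C}}\mathcal{C}\to X$ are the action maps $f\otimes_{\mathcal{C}}n\mapsto f\cdot n$ and $m\otimes_{\mathcal{C}}g\mapsto m\cdot g$; their inverses send $n$ to $1\otimes_{\mathcal{C}}n$ and $m$ to $m\otimes_{\mathcal{C}}1$, where $1$ denotes the identity morphism at the appropriate object, and the unitality and associativity of the bimodule actions recalled earlier show that these maps are mutually inverse. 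Finally, the pentagon and the triangle axioms hold because, evaluated on an iterated tensor monomial, every path in the relevant diagram produces the same fully reassociated monomial.

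I do not anticipate a real obstacle. The one point that deserves care is to verify that these pointwise constructions really do assemble into objects of $\mathcal{C}$-$\Mod$-$\mathcal{C}$ — in particular that ${}_{x}X_{\bullet}\otimes_{\mathcal{C}}{}_{\bullet}Y_{y}$ depends functorially on $x$ and on $y$, with mutually compatible left and right actions, so that $X\otimes_{\mathcal{C}}Y$ is indeed a bimodule — after which each of the four assertions follows from the corresponding well-known fact about modules over a ring, applied in each component.
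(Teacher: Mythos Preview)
Your sketch is correct and covers the four assertions with the appropriate componentwise arguments. Note, however, that the paper does not actually prove this statement: it is presented as a definitional/background fact, with the monoidal structure, the unit, the existence of coproducts, and distributivity all asserted without argument (the last being explicitly flagged as ``easy to see''). Your proposal therefore supplies the details that the paper deliberately omits; there is nothing to compare against beyond observing that your approach is the standard one and matches what the authors evidently have in mind.
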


\begin{fact}[Exemples of $\mathcal{C}$-bimodules.]
By definition, $\mathcal{C}\otimes \mathcal{C}$ is the $\mathcal{C}$%
-bimodule whose components are the vector spaces%
\begin{equation*}
_{x}(\mathcal{C}\otimes \mathcal{C})_{y}=\textstyle\bigoplus\limits_{z\in 
\mathcal{C}_{0}}(_{x}\mathcal{C}_{z}\otimes {}_{z}\mathcal{C}_{y}).
\end{equation*}%
The left and right actions are induced by the composition in $\mathcal{C}$.
Thus the bimodule structure is defined by the relation%
\begin{equation*}
f^{\prime }\cdot (g^{\prime \prime \prime })\cdot f^{\prime \prime
}=(f^{\prime }\circ g^{\prime })\otimes (g^{\prime \prime }\circ f^{\prime
\prime }),
\end{equation*}%
where $f^{\prime }$, $f^{\prime \prime }$, $g^{\prime }$ and $g^{\prime
\prime }$ are arbitrary morphisms in $\mathcal{C}$ such that $f^{\prime
}\circ g^{\prime }$ and $g^{\prime \prime }\circ f^{\prime \prime }$ make
sense. One can prove easily that $\mathcal{C}\otimes \mathcal{C}$ is
projective as a $\mathcal{C}$-bimodule.

Another example of $\mathcal{C}$-bimodule is $\mathcal{C}$ itself. Its
components are the linear spaces $_{x}\mathcal{C}_{y}$\ and the actions are
defined by the composition in $\mathcal{C}$. Therefore, for any object $x$
in $\mathcal{C}$ we can consider the right $\mathcal{C}$-module $_{x}%
\mathcal{C}_{\bullet }.$ In view of the remarks from the previous
subsection, for any family $\mathcal{I}:=\{x_{i}\}_{i\in I}$ of objects in $%
\mathcal{C}_{0},$ we can consider the coproduct $\mathcal{C}{(}\mathcal{I}{)}%
:=\oplus_{i\in I}{}_{x_{i}}\mathcal{C}_{\bullet }$ in the category of right $%
\mathcal{C}$-modules. Of course, a similar coproduct exists in the category
of left $\mathcal{C}$-modules. %%
\end{fact}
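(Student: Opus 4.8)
The plan is to realize $\mathcal{C}\otimes\mathcal{C}$ as a coproduct of representable modules over the enveloping category $\mathcal{C}^{e}=\mathcal{C}\boxtimes\mathcal{C}^{op}$, and then to invoke the two standard facts that a representable module over a linear category is projective and that a coproduct of projective modules is again projective.

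First I would split $\mathcal{C}\otimes\mathcal{C}$ as a $\mathcal{C}$-bimodule. For each object $z\in\mathcal{C}_{0}$ consider the family $P^{(z)}$ with components ${}_{x}(P^{(z)})_{y}:={}_{x}\mathcal{C}_{z}\otimes{}_{z}\mathcal{C}_{y}$. Inspecting the formula that defines the bimodule structure of $\mathcal{C}\otimes\mathcal{C}$, the left action composes on the first tensor factor and the right action on the second, so neither of them changes the index $z$; hence every $P^{(z)}$ is a sub-bimodule of $\mathcal{C}\otimes\mathcal{C}$ and
\begin{equation*}
\mathcal{C}\otimes\mathcal{C}=\bigoplus_{z\in\mathcal{C}_{0}}P^{(z)}\qquad\text{in }\mathcal{C}\text{-}\Mod\text{-}\mathcal{C}.
\end{equation*}

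Next I would identify $P^{(z)}$ with a representable module. Since a $\mathcal{C}$-bimodule is the same thing as a left $\mathcal{C}^{e}$-module and
\begin{equation*}
{}_{(x,y)}(\mathcal{C}^{e})_{(z,z)}={}_{x}\mathcal{C}_{z}\otimes{}_{y}(\mathcal{C}^{op})_{z}={}_{x}\mathcal{C}_{z}\otimes{}_{z}\mathcal{C}_{y},
\end{equation*}
the bimodule $P^{(z)}$ is precisely the representable left $\mathcal{C}^{e}$-module attached to the object $(z,z)$, equipped with its tautological action by composition. This identification is the one step that requires genuine care: one has to keep track of the $\mathcal{C}^{op}$-factor and check that the composition action of $\mathcal{C}^{e}$ on ${}_{(\bullet,\bullet)}(\mathcal{C}^{e})_{(z,z)}$ matches, factor by factor, the left and right $\mathcal{C}$-actions on $P^{(z)}$ recalled above; everything else is formal. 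Finally, for any linear category $R$ and any object $u$, the representable module ${}_{\bullet}R_{u}$ is projective, because by the $\Bbbk$-linear Yoneda lemma $\Hom_{R}({}_{\bullet}R_{u},-)$ is naturally isomorphic to the evaluation functor $M\mapsto{}_{u}M$, and the latter is exact since kernels, cokernels and images in $R$-$\Mod$ are computed componentwise. Taking $R=\mathcal{C}^{e}$ and $u=(z,z)$ shows each $P^{(z)}$ is projective; and since $\Hom\!\bigl(\bigoplus_{i}Q_{i},-\bigr)\cong\prod_{i}\Hom(Q_{i},-)$ is exact whenever each $\Hom(Q_{i},-)$ is, the coproduct $\mathcal{C}\otimes\mathcal{C}=\bigoplus_{z}P^{(z)}$ is projective as a $\mathcal{C}$-bimodule. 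There is no substantial obstacle in this argument beyond getting the variance conventions right in the identification step.
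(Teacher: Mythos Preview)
Your argument is correct and is exactly the standard one: split $\mathcal{C}\otimes\mathcal{C}$ as $\bigoplus_{z}P^{(z)}$, identify each summand with the representable $\mathcal{C}^{e}$-module at $(z,z)$, and appeal to Yoneda plus closure of projectives under coproducts. The paper does not actually write out a proof here---it simply asserts that ``one can prove easily that $\mathcal{C}\otimes\mathcal{C}$ is projective as a $\mathcal{C}$-bimodule''---so your proposal supplies precisely the details the authors leave to the reader, and there is nothing to compare beyond noting that this is the intended argument.
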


\begin{fact}[Finitely generated projective $\mathcal{C}$-modules and the
trace map.]
\label{fa: fgp}We fix, as before, a $\Bbbk $-linear category $\mathcal{C}$.
Let $M$ be a right $\mathcal{C}$-module. By definition, $M$ is finitely
generated if there is a finite family $\mathcal{I}:=\{x_{i}\}_{i\in I}$ of
objects in $\mathcal{C}$ such that $M$ is a quotient of ${}\mathcal{C}{(}%
\mathcal{I}{)}$. Equivalently, there is a set $\left\{ m_{i}\right\} _{i\in
I}$, with $m_{i}\in M_{x_{i}}$, such that any $m\in M_{x}$ can be written as
a linear combination%
\begin{equation}
m=\textstyle\sum\limits_{i\in I}m_{i}\cdot f_{i},  \label{ec:m}
\end{equation}%
for some $f_{i}\in {}_{x_{i}}\mathcal{C}_{x}$. If, in addition, $M$ is
projective then the canonical projection from $\mathcal{C}{(}\mathcal{I}{)}$
to $M$ has a section in $\Mod$-$\mathcal{C}.$ Therefore $M$ is a direct
summand of $\mathcal{C}{(\mathcal{I})}$. The converse also holds, as $%
\mathcal{C}{(}\mathcal{I}{)}$ is a finitely generated projective right $%
\mathcal{C}$-module, for any finite family $\mathcal{I}$ of objects in $%
\mathcal{C}.$

From the above characterization we deduce that $(M,\cdot )$ is a finitely
generated projective right $\mathcal{C}$-module if and only if there are 
\emph{dual bases} $\{m_{i}\}_{i\in I}$ and $\{\varphi ^{i}\}_{i\in I}$ on $M$%
. By definition, $\mathcal{I}:=\{x_{i}\}_{i\in I}$ is a finite family of
objects, $m_{i}\in M_{x_{i}}$ and $\varphi ^{i}:M\rightarrow {}_{x_{i}}%
\mathcal{C}_{\bullet }$ is a morphism of right $\mathcal{C}$-modules, for
all $i\in I$. We shall denote the components of $\varphi ^{i}$ by $\varphi
_{x}^{i}:M_{x}\rightarrow {}_{x_{i}}\mathcal{C}_{x}$. In addition, for any $%
m\in M_{x}$, the following relation holds%
\begin{equation}
m=\textstyle\sum\limits_{i\in I}m_{i}\cdot \varphi _{x}^{i}(m).
\label{ec:phi}
\end{equation}%
Using the existence of dual bases for finitely generated projective $%
\mathcal{C}$-modules, one proves that%
\begin{equation}
\Hom_{\mathcal{C}}(M,M)\overset{\cong }{\longrightarrow }M\otimes _{\mathcal{%
C}}M^{\ast },  \label{ec:izo}
\end{equation}%
where $M^{\ast }$ is the dual of $M.$ By definition, $M^{\ast }$ is a left $%
\mathcal{C}$-module, and its components are the linear spaces $_{x}M^{\ast
}:=\Hom_{\mathcal{C}}(M,{}_{x}\mathcal{C}_{\bullet })$. If $\varphi
=\{\varphi _{y}\}_{y\in \mathcal{C}_{0}}$ is a morphism of right $\mathcal{C}
$-modules from $M$ to $_{x}\mathcal{C}_{\bullet }$ and $f\in {}_{y}\mathcal{C%
}_{x},$ then the component $\left( f\cdot \varphi \right)
_{z}:M_{z}\rightarrow {}_{y}\mathcal{C}_{z}$ of $f\cdot \varphi \in
{}_{y}M^{\ast }$ maps $m\in M_{z}$ to $f\cdot \varphi _{z}(m).$

As in the case of $\Bbbk $-algebras, we can speak about the commutator $%
[f,g] $ of two morphisms $f\in {}_{x}\mathcal{C}_{y}$ and $g\in {}_{y}%
\mathcal{C}_{x}.$ By definition, $[f,g]:=f\circ g-g\circ f$ is an element in 
$\oplus _{z\in \mathcal{C}_{0}}{}_{z}\mathcal{C}_{z}.$ Let $\mathcal{C}_{ab}$
denote the quotient vector space of $\oplus _{z\in \mathcal{C}_{0}}{}_{z}%
\mathcal{C}_{z}$ through the subspace $[\mathcal{C},\mathcal{C}]$ spanned by
all commutators in $\mathcal{C}.$ The class of $f\in{}_x\mathcal{C}_x$ in $%
\mathcal{C}_{ab}$ will be denoted by $f+[\mathcal{C},\mathcal{C}]$.

If $M$ is a right $\mathcal{C}$-module, then the \textit{evaluation map} $\ev%
_{M}:M\otimes _{\mathcal{C}}M^{\ast }\rightarrow \mathcal{C}_{ab}$ is
uniquely defined by%
\begin{equation*}
\ev_{M}(m\otimes _{\mathcal{C}}\varphi ):=\varphi _{x}(m)+[\mathcal{C},%
\mathcal{C}],
\end{equation*}%
for any morphism of right $\mathcal{C}$-modules $\varphi =\{\varphi
_{y}\}_{y\in \mathcal{C}_{0}}$ in $_{x}M^{\ast }$ and $m\in M_{x}$.

We can now define the \emph{Hattori-Stallings trace map} $\Tr_{M}:\Hom_{%
\mathcal{C}}(M,M)\rightarrow \mathcal{C}_{ab},$ for any finitely generated
projective right $\mathcal{C}$-module $M,$ as being the composition of the
evaluation map $\ev_{M}$ with the isomorphism (\ref{ec:izo}). Therefore, if $%
\{m_{i}\}_{z\in I}$ and $\{\varphi ^{i}\}_{z\in I}$ are finite dual bases on 
$M,$ and $u:=\{u_{x}\}_{x\in \mathcal{C}_{0}}$ is an endomorphism of $M,$
then%
\begin{equation}
\Tr_{M}(u)=\textstyle\sum\limits_{i\in I}\varphi
_{x_{i}}^{i}(u_{x_{i}}(m_{i}))+[\mathcal{C},\mathcal{C}].  \label{ec:Tr}
\end{equation}%
Note that the formula for $\Tr_{M}(u)$ does not depend on the choice of the
dual bases. Furthermore, if $v:N\rightarrow M$ and $w:M\rightarrow N$ are
morphisms between two finitely generated projective right $\mathcal{C}$%
-modules, then by using \eqref{ec:Tr} one can easily see that $\Tr%
_{M}(v\circ w)=\Tr_{N}(w\circ v).$
\end{fact}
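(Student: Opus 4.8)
The plan is to evaluate both sides of the asserted identity directly from formula \eqref{ec:Tr}, using one pair of finite dual bases on $M$ and one on $N$, and then to observe that the two resulting expressions differ only by a sum of commutators, hence represent the same class in $\mathcal{C}_{ab}$.

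Concretely, since $M$ and $N$ are finitely generated projective, I fix dual bases $\{m_i\}_{i\in I}$, $\{\varphi^i\}_{i\in I}$ on $M$ with $m_i\in M_{x_i}$, and dual bases $\{n_j\}_{j\in J}$, $\{\psi^j\}_{j\in J}$ on $N$ with $n_j\in N_{y_j}$. I then set
\begin{equation*}
a_{ij}:=\varphi^i_{y_j}\bigl(v_{y_j}(n_j)\bigr)\in{}_{x_i}\mathcal{C}_{y_j},
\qquad
b_{ji}:=\psi^j_{x_i}\bigl(w_{x_i}(m_i)\bigr)\in{}_{y_j}\mathcal{C}_{x_i}.
\end{equation*}
The two computations to carry out are
\begin{equation*}
\Tr_M(v\circ w)=\sum_{i\in I}\sum_{j\in J}a_{ij}\circ b_{ji}+[\mathcal{C},\mathcal{C}],
\qquad
\Tr_N(w\circ v)=\sum_{i\in I}\sum_{j\in J}b_{ji}\circ a_{ij}+[\mathcal{C},\mathcal{C}].
\end{equation*}
For the first one I would start from $\Tr_M(v\circ w)=\sum_i\varphi^i_{x_i}\bigl(v_{x_i}(w_{x_i}(m_i))\bigr)+[\mathcal{C},\mathcal{C}]$, expand $w_{x_i}(m_i)\in N_{x_i}$ via \eqref{ec:phi} applied to the dual basis of $N$, and then commute $v_{x_i}$ and $\varphi^i_{x_i}$ across the right $\mathcal{C}$-action; this is legitimate because $v$ and each $\varphi^i$ are morphisms of right $\mathcal{C}$-modules. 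The second identity is obtained symmetrically, by expanding $v_{y_j}(n_j)\in M_{y_j}$ in the dual basis of $M$ and then commuting $w_{y_j}$ and $\psi^j_{y_j}$ across the action.

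Finally, for each pair $(i,j)$ the element $a_{ij}\circ b_{ji}-b_{ji}\circ a_{ij}=[a_{ij},b_{ji}]$ is precisely the commutator of the morphisms $a_{ij}\in{}_{x_i}\mathcal{C}_{y_j}$ and $b_{ji}\in{}_{y_j}\mathcal{C}_{x_i}$, hence lies in $[\mathcal{C},\mathcal{C}]$; summing over the finite set $I\times J$ shows that the two double sums above have the same image in $\mathcal{C}_{ab}$, which is exactly $\Tr_M(v\circ w)=\Tr_N(w\circ v)$.

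I do not expect a genuine obstacle here: the argument is essentially bookkeeping, and the only points requiring care are keeping track of which hom-space each symbol inhabits while pushing $v$, $w$, $\varphi^i$, $\psi^j$ past the $\mathcal{C}$-action, and using the finiteness of $I$ and $J$ to rearrange the double sum freely. As a sanity check, specializing to $N=M$ and $w=\Id_M$ recovers, as a byproduct, the independence of \eqref{ec:Tr} on the chosen dual bases that was noted just above.
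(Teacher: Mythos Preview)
Your proposal is correct and is precisely the computation the paper has in mind when it says ``by using \eqref{ec:Tr} one can easily see that $\Tr_{M}(v\circ w)=\Tr_{N}(w\circ v)$''; the paper gives no further details beyond that hint, and your expansion via the dual bases of $M$ and $N$ into the double sums $\sum_{i,j}a_{ij}\circ b_{ji}$ and $\sum_{i,j}b_{ji}\circ a_{ij}$, followed by the observation that their difference lies in $[\mathcal{C},\mathcal{C}]$, is exactly the intended argument.
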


\section{Connections. The curvature of a connection.}

In this section we introduce the main tools that we need for the
construction of the Chern map. We start by recalling the definition of $DG$%
-categories, which explicitly appear in the definition of connections.

\begin{fact}[$DG$-categories.]
We say that a category $\Omega ^{\ast }$ is graded if every hom-space $%
_{x}\Omega _{y}^{\ast }$ is endowed with a decomposition $_{x}\Omega
_{y}^{\ast }=\oplus _{n\geq 0\,}${}$_{x}\Omega _{y}^{n}$ such that $f\circ
g\in {}_{x}\Omega _{z}^{n+m},$ for any $f\in {}_{x}\Omega _{y}^{n}$ and $%
g\in {}_{y}\Omega _{z}^{m}.$ For simplicity, the composition of two forms in
a $DG$-category $\Omega ^{\ast }$ will be denoted by concatenation, that is $%
\omega \circ \zeta =\omega \zeta .$

A differential $d^{\ast }$ on $\Omega ^{\ast }$ is a family of linear maps $%
_{x}d_{y}^{n}:{}_{x}\Omega _{y}^{n}\rightarrow {}_{x}\Omega _{y}^{n+1}$ such
that $_{x}d_{y}^{n+1}\circ {}_{x}d_{y}^{n}=0$ and%
\begin{equation}
_{x}d_{z}^{n+m}(\omega \zeta )={}_{x}d_{y}^{n}(\omega )\zeta +(-1)^{n}\omega
\,{}_{y}d_{z}^{m}(\zeta ),  \label{ec:Leibniz2}
\end{equation}%
for $\omega \in {}_{x}\Omega _{y}^{n}$ and $\zeta \in {}_{y}\Omega _{z}^{m}$%
. We refer to the relation (\ref{ec:Leibniz2}) saying that $d^{\ast }$
satisfies the (graded) Leibniz rule.

Recall that a $DG$-category is a couple $(\Omega ^{\ast },d^{\ast }),$ with $%
\Omega ^{\ast }$ a graded category and $d^{\ast }\ $a differential on $%
\Omega ^{\ast }.$ If $(\Omega ^{\ast },d^{\ast })$ is a $DG$-category, then
the vector spaces $\left\{ _{x}\Omega _{y}^{0}\right\} _{x,y\in \Omega
_{0}^{\ast }}$ define a subcategory $\Omega ^{0}$ of $\Omega ^{\ast }$.
Moreover, for every $n\in \mathbb{N},$ the family $\Omega ^{n}:=\left\{
_{x}\Omega _{y}^{n}\right\} _{x,y\in \Omega _{0}^{\ast }}$ has a natural
structure of $\Omega ^{0}$-bimodule.

An element $\omega \in {}_{x}\Omega _{y}^{n}$ will be called a differential
form in $\Omega ^{\ast }$ from $y$ to $x$ of degree $n.$ The degree of $%
\omega $ will also be denoted by $\left\vert \omega \right\vert $ and, if
there is no danger of confusion, we shall write $d\omega$ instead of $%
{}_{x}d_{y}^{n}(\omega ).$
\end{fact}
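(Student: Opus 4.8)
The plan is to derive both assertions in the closing lines of the statement straight from the single structural axiom available --- compatibility of composition with the grading, $f\circ g\in {}_x\Omega^{n+m}_z$ for $f\in {}_x\Omega^n_y$ and $g\in {}_y\Omega^m_z$ --- together with the ordinary axioms of a category. The only point that is not purely formal is that the identity morphisms must be homogeneous of degree zero, so I would settle that first.

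To see that $1_x\in {}_x\Omega^0_x$ for every object $x$, write $1_x=\sum_{n\ge 0}e_n$ with $e_n\in {}_x\Omega^n_x$ (a finite sum). Since $e_0$ is an endomorphism of $x$ we have $e_0\circ 1_x=e_0=1_x\circ e_0$; comparing homogeneous components, and using that $e_n\circ e_m\in {}_x\Omega^{n+m}_x$, yields $e_0\circ e_0=e_0$ and $e_0\circ e_m=e_n\circ e_0=0$ for $n,m\ge 1$. Now expand
\begin{equation*}
\textstyle \sum_{n\ge 0}e_n=1_x=1_x\circ 1_x=\sum_{n,m\ge 0}e_n\circ e_m
\end{equation*}
and read off the degree-$k$ component: $e_k=\sum_{n+m=k}e_n\circ e_m$. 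For $k=1$ the right-hand side is $e_0\circ e_1+e_1\circ e_0=0$, so $e_1=0$; by induction on $k$ one gets $e_k=0$ for every $k\ge 1$, since in the sum for $e_k$ each term with $n=0$ or $m=0$ already vanishes, while each term with $n,m\ge 1$ has both factors of degree strictly below $k$ and hence vanishes by the inductive hypothesis. Thus $1_x=e_0\in {}_x\Omega^0_x$.

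Granting this, the first assertion is immediate: composition carries ${}_x\Omega^0_y\otimes {}_y\Omega^0_z$ into ${}_x\Omega^0_z$, the identities lie in $\Omega^0$, and associativity is inherited from $\Omega^*$, so $\Omega^0$ is a subcategory of $\Omega^*$ on the same set of objects. For the second assertion, fix $n$ and define the left and right actions of $\Omega^0$ on $\Omega^n$ as the relevant restrictions of the composition of $\Omega^*$, namely ${}_x\Omega^0_y\otimes {}_y\Omega^n_z\to {}_x\Omega^n_z$ and ${}_x\Omega^n_y\otimes {}_y\Omega^0_z\to {}_x\Omega^n_z$; these land in degree $n$ because $0+n=n+0=n$. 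The unit laws $1_x\cdot\omega=\omega=\omega\cdot 1_z$ are exactly the homogeneity of identities established above, and the remaining module laws, along with the bimodule compatibility $(f\cdot\omega)\cdot g=f\cdot(\omega\cdot g)$, are nothing but special instances of the associativity of composition in $\Omega^*$. Hence each $\Omega^n$ is naturally an $\Omega^0$-bimodule. I expect no real obstacle beyond the bookkeeping in the homogeneity step, which is the only place where the bare definition of a graded category --- which does not explicitly postulate homogeneous identities --- has to be supplemented by an argument.
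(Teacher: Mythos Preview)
Your argument is correct. The paper does not prove this block at all: it is written as a definitional paragraph (a ``fact'' environment), and the two assertions in the closing lines are simply stated as evident consequences of the definition, with no justification offered.

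Your treatment is therefore strictly more careful than the paper's. In particular, the paper's definition of a graded category only demands that composition respect the grading; it does \emph{not} explicitly require $1_x\in{}_x\Omega^0_x$, yet this is exactly what is needed for $\Omega^0$ to be a subcategory and for the unit laws in the bimodule structure on $\Omega^n$. Your inductive argument extracting $e_k=0$ for $k\ge 1$ from $1_x=1_x\circ 1_x$ fills this gap cleanly. (One small stylistic point: in the line ``$e_0\circ e_m=e_n\circ e_0=0$ for $n,m\ge 1$'' you are conflating two separate conclusions drawn from $e_0\circ 1_x=e_0$ and $1_x\circ e_0=e_0$ respectively; it would read more clearly to separate them.) Once the homogeneity of identities is in hand, the rest is, as you say, bookkeeping, and matches what the paper takes for granted.
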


\begin{fact}[Connections.]
\label{fact:conn}Let $M$ be a finitely generated right module over a linear
category $\mathcal{C}$. We fix a $DG$-category $(\Omega ^{\ast },d^{\ast })$
such that $\Omega ^{0}=\mathcal{C}$. Since $\Omega ^{1}$ is a $\mathcal{C}$%
-bimodule, the tensor product $M\otimes _{\mathcal{C}}{}_{\bullet }\Omega
_{x}^{1}$ makes sense for any $x\in\mathcal{C}_0$.

By definition, a \emph{connection} $\nabla :M\rightarrow M\otimes _{\mathcal{%
C}}\Omega ^{1}$ on $M$ (with respect to $\Omega ^{\ast })$ is a family $%
\{\nabla _{x}\}_{x\in \mathcal{C}_{0}}$ of $\Bbbk $-linear maps $\nabla
_{x}:M_{x}\rightarrow M\otimes _{\mathcal{C}}{}_{\bullet }\Omega _{x}^{1}$
such that, for all $m\in M_{x}$ and $f\in {}_{x}\mathcal{C}_{y}$,%
\begin{equation}
\nabla _{y}(m\cdot f)=\nabla _{x}(m)\cdot f+m\otimes _{\mathcal{C}}df.
\label{ec:conn}
\end{equation}%
By assumption, there is a finite set $\left\{ m_{i}\right\} _{i\in I}$ of
generators for $M,$ with $m_{i}\in M_{x_{i}}$ for some $x_{i}\in \mathcal{C}%
_{0}.$ In view of equation \eqref{ec:conn}, the connection $\nabla $ is
uniquely determined by the elements $\lambda _{ij}\in {}_{x_{i}}\Omega
_{x_{j}}^{1}$ such that%
\begin{equation}
\nabla _{x_{i}}(m_{i})=\textstyle\sum\limits_{j\in I}m_{j}\otimes _{\mathcal{%
C}}\lambda _{ji}.  \label{ec:lambda}
\end{equation}%
Indeed, if $m\in M_{x},$ then there is $\{f_{i}\}_{i\in I}$ in $\mathcal{C}{(%
}\mathcal{I}{)}_{x}$ such that the relation (\ref{ec:m}) holds. Thus,%
\begin{equation}
\nabla _{x}(m)=\textstyle\sum\limits_{i,j\in I}m_{i}\otimes _{\mathcal{C}%
}\lambda _{ij}f_{j}+\textstyle\sum\limits_{i\in I}m_{i}\otimes _{\mathcal{C}%
}df_{i}.  \label{ec:coord}
\end{equation}%
We shall denote the matrix $(\lambda _{ij})_{i,j\in I}$ by $\Lambda $.
\end{fact}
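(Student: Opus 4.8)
The plan is to establish the two assertions packaged in the statement: that a connection $\nabla$ on $M$ is completely determined by the matrix $\Lambda=(\lambda_{ij})$ introduced in \eqref{ec:lambda}, and that for an arbitrary $m\in M_x$ expressed as in \eqref{ec:m} the value $\nabla_x(m)$ is given by the coordinate formula \eqref{ec:coord}. Since the right-hand side of \eqref{ec:coord} involves only the forms $\lambda_{ij}$, the fixed generators $m_i$, the morphisms $f_i$ furnished by \eqref{ec:m}, and the differential $d^\ast$ of $\Omega^\ast$, the uniqueness claim follows at once from the formula. So the task reduces to proving \eqref{ec:coord}.

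First I would fix $m\in M_x$ together with a presentation $m=\sum_{i\in I}m_i\cdot f_i$, with $f_i\in{}_{x_i}\mathcal{C}_x$, which exists because $\{m_i\}_{i\in I}$ generates $M$. Using the $\Bbbk$-linearity of $\nabla_x$ and then applying the Leibniz identity \eqref{ec:conn} to each summand $m_i\cdot f_i$ (here $m_i\in M_{x_i}$ and $f_i\in{}_{x_i}\mathcal{C}_x$, so the target of $\nabla_x$ is the correct one), one gets
\begin{equation*}
\nabla_x(m)=\sum_{i\in I}\nabla_x(m_i\cdot f_i)=\sum_{i\in I}\bigl(\nabla_{x_i}(m_i)\cdot f_i+m_i\otimes_{\mathcal{C}}df_i\bigr).
\end{equation*}
Next I would substitute \eqref{ec:lambda} and use the description of the right $\mathcal{C}$-action on $M\otimes_{\mathcal{C}}\Omega^1$, that is $(m_j\otimes_{\mathcal{C}}\lambda_{ji})\cdot f_i=m_j\otimes_{\mathcal{C}}(\lambda_{ji}f_i)$, where $\lambda_{ji}f_i=\lambda_{ji}\circ f_i\in{}_{x_j}\Omega^1_x$ is the composition in $\Omega^\ast$ of $\lambda_{ji}\in{}_{x_j}\Omega^1_{x_i}$ with $f_i\in{}_{x_i}\Omega^0_x$. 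This turns the above into $\nabla_x(m)=\sum_{i,j\in I}m_j\otimes_{\mathcal{C}}\lambda_{ji}f_i+\sum_{i\in I}m_i\otimes_{\mathcal{C}}df_i$, and after swapping the names of the summation indices $i$ and $j$ in the double sum this is precisely \eqref{ec:coord}.

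I do not expect a genuine obstacle: the argument is just an unwinding of the defining Leibniz rule \eqref{ec:conn} together with the definitions of $\otimes_{\mathcal{C}}$ and of the $\mathcal{C}$-bimodule structure on $\Omega^1$. The points that need a little care are purely bookkeeping: tracking the degrees and the source/target objects of $\lambda_{ji}$, $f_i$ and $df_i$ so that each tensor monomial genuinely lies in $M\otimes_{\mathcal{C}}{}_\bullet\Omega^1_x$, and performing the relabelling of the double sum so that the outcome matches \eqref{ec:coord} verbatim. One small conceptual remark worth making is that, since the left-hand side of \eqref{ec:coord} depends only on $m$, the same computation shows the right-hand side is independent of the chosen presentation $m=\sum_i m_i\cdot f_i$; for a finitely generated free module this is automatic since the presentation is unique, and that observation is exactly what is needed if one also wants the converse (every $\Lambda$ arises from a connection): take \eqref{ec:coord} as the definition of $\nabla_x$ on a finitely generated free module, verify \eqref{ec:conn} by the same Leibniz computation, and transport the result along a splitting $\mathcal{C}(\mathcal{I})\cong M\oplus M'$ when $M$ is projective.
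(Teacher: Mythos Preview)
Your proposal is correct and follows exactly the route the paper takes: the paper's justification is just the single word ``Thus'' before \eqref{ec:coord}, and you have spelled out what that word means---apply $\Bbbk$-linearity of $\nabla_x$, use the Leibniz rule \eqref{ec:conn} termwise, insert \eqref{ec:lambda}, and relabel the double sum. Your additional remarks on well-definedness and the converse construction are correct and anticipate \S\ref{fa:ExConn}, where the paper carries out exactly that converse for the free module $\mathcal{C}(\mathcal{I})$.
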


In the following lemma we give two methods for constructing new connections.

\begin{lemma}
\label{le:conn}Let $(\Omega ^{\ast },d^{\ast })$ be a $DG$-category such
that $\Omega ^{0}=\mathcal{C}.$

\begin{enumerate}
\item[(i)] Let $\nabla _{p}:M_{p}\rightarrow M_{p}\otimes _{\mathcal{C}%
}\Omega ^{1}$ be a connection on $M_{p},$ where $p\in \{1,2\}$. There exists
a unique connection $\nabla _{1}\oplus \nabla _{2}$ on $M_{1}\oplus M_{2}$
such that the diagram below is commutative for any $p\in \{1,2\}$.%
\begin{equation*}
\xymatrix{ M_{1}\oplus M_{2}\ar[r]^-{\nabla_1\oplus\nabla_2} & \left(
M_{1}\oplus M_{2}\right) \otimes _{\mathcal{C}}\Omega ^{1} \\
M_{p}\ar[r]_-{\nabla_p}\ar@{^{(}->}[u] & M_{p} \otimes _{\mathcal{C}}\Omega
^{1}\ar@{^{(}->}[u] }
\end{equation*}

\item[(ii)] A connection $\nabla:M\to M\otimes_{\mathcal{C}}\Omega^1$
induces a connection on every direct summand $N$ of $M$.
\end{enumerate}
\end{lemma}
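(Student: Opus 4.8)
The plan is to treat the two parts separately, since (i) is a direct hands-on construction while (ii) can be deduced from (i) together with the projection/inclusion formalism.

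\textbf{Part (i).} For the direct sum, I would first observe that there is a canonical identification $\left(M_{1}\oplus M_{2}\right)_{x}={}_{}M_{1,x}\oplus M_{2,x}$ and, since $\otimes_{\mathcal{C}}$ is distributive over coproducts, a canonical identification $\left(M_{1}\oplus M_{2}\right)\otimes_{\mathcal{C}}\Omega^{1}\cong\left(M_{1}\otimes_{\mathcal{C}}\Omega^{1}\right)\oplus\left(M_{2}\otimes_{\mathcal{C}}\Omega^{1}\right)$. Using these, I simply define $\left(\nabla_{1}\oplus\nabla_{2}\right)_{x}(m_{1},m_{2}):=\left(\nabla_{1,x}(m_{1}),\nabla_{2,x}(m_{2})\right)$. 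The Leibniz identity \eqref{ec:conn} for this map follows componentwise from the Leibniz identities for $\nabla_{1}$ and $\nabla_{2}$, because the right $\mathcal{C}$-action and the map $f\mapsto m\otimes_{\mathcal{C}}df$ are both defined componentwise under the above identifications. Commutativity of the square is immediate from the definition, since the vertical inclusions are the coproduct injections in each degree. Uniqueness follows because the two injections are jointly epimorphic on underlying families (every element of $\left(M_{1}\oplus M_{2}\right)_{x}$ is a sum of an element coming from $M_{1,x}$ and one coming from $M_{2,x}$), so a $\Bbbk$-linear family making both squares commute is forced to be the one just constructed.

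\textbf{Part (ii).} Let $N$ be a direct summand of $M$, with inclusion $\iota\colon N\to M$ and retraction $\pi\colon M\to N$ of right $\mathcal{C}$-modules, so $\pi\circ\iota=\mathrm{Id}_{N}$. Both $\iota$ and $\pi$ induce maps $\iota\otimes_{\mathcal{C}}\Omega^{1}\colon N\otimes_{\mathcal{C}}\Omega^{1}\to M\otimes_{\mathcal{C}}\Omega^{1}$ and likewise for $\pi$, functorially in the first variable. I would then define $\nabla^{N}:=\left(\pi\otimes_{\mathcal{C}}\Omega^{1}\right)\circ\nabla\circ\iota$, a family of $\Bbbk$-linear maps $\nabla^{N}_{x}\colon N_{x}\to N\otimes_{\mathcal{C}}{}_{\bullet}\Omega^{1}_{x}$. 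To verify \eqref{ec:conn}, take $n\in N_{x}$ and $f\in{}_{x}\mathcal{C}_{y}$. Since $\iota$ is a module map, $\iota(n\cdot f)=\iota(n)\cdot f$; applying the connection $\nabla$ on $M$ gives $\nabla(\iota(n)\cdot f)=\nabla(\iota(n))\cdot f+\iota(n)\otimes_{\mathcal{C}}df$; and applying $\pi\otimes_{\mathcal{C}}\Omega^{1}$, which is a bimodule map (in particular right $\mathcal{C}$-linear) and satisfies $\left(\pi\otimes_{\mathcal{C}}\Omega^{1}\right)(\iota(n)\otimes_{\mathcal{C}}df)=\pi(\iota(n))\otimes_{\mathcal{C}}df=n\otimes_{\mathcal{C}}df$, yields exactly $\nabla^{N}_{y}(n\cdot f)=\nabla^{N}_{x}(n)\cdot f+n\otimes_{\mathcal{C}}df$.

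\textbf{Main obstacle.} Nothing here is deep; the only point that needs genuine care is making the canonical identifications in Part (i) and the functoriality of $-\otimes_{\mathcal{C}}\Omega^{1}$ in Part (ii) precise at the level of the families $\{{}_{x}M_{y}\}$, so that "componentwise" manipulations and the compatibility of $(\pi\otimes_{\mathcal{C}}\Omega^{1})$ with the right action and with the universal element $df$ are rigorously justified rather than merely plausible. Once the bookkeeping of components and the distributivity of $\otimes_{\mathcal{C}}$ over coproducts (recorded in the paragraph on the tensor product of bimodules) are in place, both verifications reduce to the already-established Leibniz rule \eqref{ec:conn} for the given connections.
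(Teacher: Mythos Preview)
Your proposal is correct and follows essentially the same approach as the paper: for (i) you define the direct-sum connection componentwise (the paper writes this out using explicit tensors rather than invoking the distributivity isomorphism, but this is the same construction), and for (ii) your formula $\nabla^{N}=(\pi\otimes_{\mathcal{C}}\Omega^{1})\circ\nabla\circ\iota$ is exactly the paper's $\nabla'_{x}=(\pi\otimes_{\mathcal{C}}\Id_{\Omega^{1}})_{x}\circ\nabla_{x}\circ\sigma_{x}$. Your treatment is in fact slightly more complete, since you verify the Leibniz rule for $\nabla^{N}$ and address uniqueness in (i), both of which the paper leaves to the reader.
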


\begin{proof}
Let us sketch the proof of the first part of the lemma. The details are left
to the reader. Let $m_{p}$ be an element in $\left( M_{p}\right) _{x},$
where $p\in \{1,2\}$. Then%
\begin{equation*}
\nabla _{p}(m_{p})=\textstyle\sum_{j=1}^{n_{p}}m_{pj}\otimes _{\mathcal{C}%
}\omega _{pj},
\end{equation*}%
for some $m_{pj}\in \left( M_{p}\right) _{y_{j}}$ and $\omega _{pj}\in
{}_{y_{j}}\Omega _{x}^{1}.$ We now define the connection $\nabla _{1}\oplus
\nabla _{2}$ by%
\begin{equation*}
\left( \nabla _{1}\oplus \nabla _{2}\right) _{x}(m_{1},m_{2})=\textstyle%
\sum_{j=1}^{n_{1}}\left( m_{1j},0\right) \otimes _{\mathcal{C}}\omega _{1j}+%
\textstyle\sum_{j=1}^{n_{2}}\left( 0,m_{2j}\right) \otimes _{\mathcal{C}%
}\omega _{2j}.
\end{equation*}%
To prove the second part of the lemma, we choose a morphism $\pi
:M\rightarrow N$ in $\Mod$-$\mathcal{C}$ which has a section $\sigma
:N\rightarrow M$ in this category. Then, for an object $x$ in $\mathcal{C},$
we define $\nabla _{x}^{\prime }:N_{x}\rightarrow N\otimes _{\mathcal{C}%
}{}_{\bullet }\Omega _{x}^{1}$ by%
\begin{equation*}
\nabla _{x}^{\prime }=(\pi \otimes _{\mathcal{C}}\Id_{\Omega ^{1}\mathcal{C}%
})_{x}\circ \nabla _{x}\circ \sigma _{x}.
\end{equation*}%
It is easy to see that $\nabla ^{\prime }:=\{\nabla _{x}^{\prime }\}_{x\in 
\mathcal{C}_{0}}$ is a connection on $N$.
\end{proof}

\begin{fact}[Examples of connections.]
\label{fa:ExConn}We start by describing all connections on the right module $%
\mathcal{C}{(}\mathcal{I}{)}$, where $\mathcal{I}:=\{x_{i}\}_{i\in I}$ is a
finite family of objects in $\mathcal{C}.$ Clearly, $\left\{ e_{{i}}\right\}
_{i\in I}$ is a set of generators for $M$, where $e_{i}\in \mathcal{C}{(}%
\mathcal{I}{)}_{x_{i}}$ is the family $\{\delta _{i,j}1_{x_{i}}\}_{j\in I}.$
Here $\delta _{i,j}$ denotes the Kronecker symbol. Hence, for an element $%
\{f_{i}\}_{i\in I}$ in $\mathcal{C}{(}\mathcal{I}{)}_{x},$ we get%
\begin{equation}
\nabla _{x}(\{f_{i}\}_{i\in I})=\nabla _{x}(\textstyle\sum\limits_{i\in
I}e_{i}\cdot f_{i})=\textstyle\sum\limits_{i,j\in I}e_{i}\otimes _{\mathcal{C%
}}\lambda _{ij}f_{j}+\textstyle\sum\limits_{i\in I}e_{i}\otimes _{\mathcal{C}%
}df_{i},  \label{ec:conn_free}
\end{equation}%
where $\lambda _{ij}\in {}_{x_{i}}\Omega _{x_{j}}^{1}$. Conversely, if $%
\Lambda :=(\lambda _{ij})_{i,j\in I}$ is an arbitrary matrix of differential
forms of degree $1$ such that $\lambda _{ij}\in {}_{x_{i}}\Omega
_{x_{j}}^{1},$ then \eqref{ec:conn_free} defines a connection $\nabla
^{\Lambda }$ on $\mathcal{C}{(}\mathcal{I}{)}.$ In particular, if $\Lambda
=0 $, then%
\begin{equation}
\nabla _{x}(\{f_{i}\}_{i\in I})=\textstyle\sum\limits_{i\in I}e_{i}\otimes _{%
\mathcal{C}}df_{i}  \label{ec:d2}
\end{equation}%
defines a connection on $\mathcal{C}{(}\mathcal{I}{)}.$

Another important example is the Levi-Civita connection, which is defined on
a finitely generated projective module $M$. Let $\{m_{i}\}_{i\in I}$ and $%
\{\varphi ^{i}\}_{i\in I}$ be dual bases on $M.$ Recall that $m_{i}\in
M_{x_{i}}$ and $\varphi ^{i}$ is a linear map from $M$ to $_{x_{i}}\mathcal{C%
}_{\bullet }$. If $\sigma _{x}:M_{x}\rightarrow \mathcal{C}{(}\mathcal{I}{)}%
_{x}$ is the application%
\begin{equation*}
\sigma _{x}(m)=\left\{ \varphi _{x}^{i}(m)\right\} _{i\in I},
\end{equation*}%
then $\sigma =\{\sigma _{x}\}_{x\in \mathcal{C}_{0}}$ is a morphism in $\Mod$%
-$\mathcal{C}.$ Moreover, $\sigma $ is a section of the morphism $\pi :%
\mathcal{C}{(}\mathcal{I}{)}\rightarrow M$ whose component $\pi _{x}$ maps $%
\{f_{i}\}_{i\in I}\in \mathcal{C}{(}\mathcal{I}{)}_{x}$ to $\textstyle%
\sum\nolimits_{i\in I}m_{i}\cdot f_{i}.$ Thus $M$ can be regarded via $\pi $
and $\sigma $ as a direct summand of $\mathcal{C}{(}\mathcal{I}{)}.$ By the
second part of the preceding lemma, the connection on $\mathcal{C}{(}%
\mathcal{I}{)}$ given by the equation (\ref{ec:d2}) induces a connection $%
\nabla ^{LC}$ on $M$. We call $\nabla ^{LC}$ the Levi-Civita connection on $%
M.$ For every $m\in M_{x}$ we have%
\begin{equation}
\nabla _{x}^{LC}(m)=\textstyle\sum\limits_{i\in I}m_{i}\otimes _{\mathcal{C}%
}d\varphi _{x}^{i}(m).  \label{ec:LC}
\end{equation}
\end{fact}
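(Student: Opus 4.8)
The block makes four assertions and my plan is to check them in turn: (A) every connection on $\mathcal{C}{(}\mathcal{I}{)}$ has the form \eqref{ec:conn_free}; (B) conversely, each matrix $\Lambda =(\lambda _{ij})_{i,j\in I}$ with $\lambda _{ij}\in {}_{x_{i}}\Omega ^{1}_{x_{j}}$ yields a connection $\nabla ^{\Lambda }$ via \eqref{ec:conn_free}; (C) the family $\sigma =\{\sigma _{x}\}_{x\in \mathcal{C}_{0}}$ is a morphism in $\Mod$-$\mathcal{C}$ and a section of $\pi $; and (D) the connection induced on $M$ by \eqref{ec:d2} through the pair $(\pi ,\sigma )$ is the one given by \eqref{ec:LC}. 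Assertion (A) is immediate from Fact \ref{fact:conn}: since $\{e_{i}\}_{i\in I}$ is a finite generating family of $\mathcal{C}{(}\mathcal{I}{)}$ with $e_{i}\in \mathcal{C}{(}\mathcal{I}{)}_{x_{i}}$, a connection is determined by the forms $\lambda _{ij}$ defined through $\nabla _{x_{i}}(e_{i})=\sum _{j\in I}e_{j}\otimes _{\mathcal{C}}\lambda _{ji}$, and then \eqref{ec:coord} specialises to \eqref{ec:conn_free}. The only point that deserves a remark is that $\nabla _{x_{i}}(e_{i})$ indeed lies in $\bigoplus _{j\in I}{}_{x_{j}}\Omega ^{1}_{x_{i}}$; this follows from the distributivity of $\otimes _{\mathcal{C}}$ over coproducts together with the canonical identification ${}_{x}\mathcal{C}_{\bullet }\otimes _{\mathcal{C}}N\cong {}_{x}N$, $1_{x}\otimes _{\mathcal{C}}n\mapsto n$, valid for any left $\mathcal{C}$-module $N$.

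For (B), one first observes that $\{e_{i}\}_{i\in I}$ is in fact a \emph{basis}, i.e.\ $\mathcal{C}{(}\mathcal{I}{)}_{x}=\bigoplus _{i\in I}{}_{x_{i}}\mathcal{C}_{x}$, so the coordinates $\{f_{i}\}_{i\in I}$ of an element are unique and \eqref{ec:conn_free} unambiguously defines a $\Bbbk $-linear map $\nabla ^{\Lambda }_{x}$. It remains to verify the Leibniz rule \eqref{ec:conn}. For $\{f_{i}\}_{i\in I}\in \mathcal{C}{(}\mathcal{I}{)}_{x}$ and $g\in {}_{x}\mathcal{C}_{y}$ we have $\{f_{i}\}_{i\in I}\cdot g=\{f_{i}g\}_{i\in I}$; applying \eqref{ec:conn_free} and then \eqref{ec:Leibniz2} in the form $d(f_{i}g)=(df_{i})g+f_{i}\,dg$ (the sign is $+$ because $|f_{i}|=0$) gives
\[
\nabla ^{\Lambda }_{y}(\{f_{i}g\}_{i\in I})=\sum _{i,j\in I}e_{i}\otimes _{\mathcal{C}}\lambda _{ij}f_{j}g+\sum _{i\in I}e_{i}\otimes _{\mathcal{C}}(df_{i})g+\sum _{i\in I}e_{i}\otimes _{\mathcal{C}}f_{i}\,dg .
\]
The first two sums add up to $\nabla ^{\Lambda }_{x}(\{f_{i}\}_{i\in I})\cdot g$, while the third equals $\sum _{i\in I}(e_{i}\cdot f_{i})\otimes _{\mathcal{C}}dg=\{f_{i}\}_{i\in I}\otimes _{\mathcal{C}}dg$; this is precisely \eqref{ec:conn}, so $\nabla ^{\Lambda }$ is a connection, and $\Lambda =0$ gives \eqref{ec:d2}.

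For (C): each $\varphi ^{i}$ is a morphism of right $\mathcal{C}$-modules, so for $m\in M_{y}$ and $f\in {}_{y}\mathcal{C}_{x}$ one has $\sigma _{x}(m\cdot f)=\{\varphi ^{i}_{x}(m\cdot f)\}_{i\in I}=\{\varphi ^{i}_{y}(m)\cdot f\}_{i\in I}=\{\varphi ^{i}_{y}(m)\}_{i\in I}\cdot f=\sigma _{y}(m)\cdot f$, using that the right action on $\mathcal{C}{(}\mathcal{I}{)}$ is componentwise composition; hence $\sigma \in \Mod$-$\mathcal{C}$. That $\pi \circ \sigma =\Id _{M}$ is exactly the dual-basis identity \eqref{ec:phi}: $\pi _{x}(\sigma _{x}(m))=\sum _{i\in I}m_{i}\cdot \varphi ^{i}_{x}(m)=m$. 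Finally, for (D), Lemma \ref{le:conn}(ii) gives $\nabla ^{LC}_{x}=(\pi \otimes _{\mathcal{C}}\Id )_{x}\circ \nabla _{x}\circ \sigma _{x}$ with $\nabla $ the connection \eqref{ec:d2}; writing $\sigma _{x}(m)=\sum _{i\in I}e_{i}\cdot \varphi ^{i}_{x}(m)$ we get $\nabla _{x}(\sigma _{x}(m))=\sum _{i\in I}e_{i}\otimes _{\mathcal{C}}d\varphi ^{i}_{x}(m)$, and since $\pi _{x_{i}}(e_{i})=m_{i}$ this yields $\nabla ^{LC}_{x}(m)=\sum _{i\in I}m_{i}\otimes _{\mathcal{C}}d\varphi ^{i}_{x}(m)$, which is \eqref{ec:LC}.

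The one step that is more than pure bookkeeping is the identification $\mathcal{C}{(}\mathcal{I}{)}\otimes _{\mathcal{C}}\Omega ^{1}\cong \bigoplus _{i\in I}{}_{x_{i}}\Omega ^{1}_{\bullet }$ (and the accompanying fact that $\{e_{i}\}_{i\in I}$ is a basis), used in (A) and (B); once this is in hand, everything reduces to the two Leibniz rules \eqref{ec:conn} and \eqref{ec:Leibniz2} together with the dual-basis relation \eqref{ec:phi}, so I do not expect any real obstacle here.
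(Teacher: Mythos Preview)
Your proposal is correct and follows exactly the approach implicit in the paper: the paper derives \eqref{ec:conn_free} from the general formula \eqref{ec:coord} of Fact~\ref{fact:conn}, asserts the converse without details, and obtains \eqref{ec:LC} by applying Lemma~\ref{le:conn}(ii) to the pair $(\pi,\sigma)$. You have simply filled in the routine verifications (the Leibniz rule for $\nabla^{\Lambda}$, that $\sigma$ is a $\mathcal{C}$-linear section of $\pi$, and the explicit computation of $(\pi\otimes_{\mathcal{C}}\Id)\circ\nabla\circ\sigma$) that the paper leaves to the reader.
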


\begin{fact}[The curvature of a connection.]
We fix a connection $\nabla :M\rightarrow M\otimes _{\mathcal{C}}\Omega ^{1}$
on a finitely generated right $\mathcal{C}$-module $M$. Our goal is to
extend $\nabla $ to an endomorphism $R(\nabla )$ of degree $2$ of the graded 
$\Omega ^{\ast }$-module $M\otimes _{\mathcal{C}}\Omega ^{\ast }.$ First,
for $n\geq 0$ and $x\in \mathcal{C}_{0},$ we define $\nabla
_{x}^{n}:(M\otimes _{\mathcal{C}}\Omega ^{n})_{x}\rightarrow (M\otimes _{%
\mathcal{C}}\Omega ^{n+1})_{x}$ by%
\begin{equation*}
\nabla _{x}^{n}(m\otimes _{\mathcal{C}}\omega ):=\nabla _{x}(m)\cdot \omega
+(-1)^{n}m\otimes _{\mathcal{C}}d\omega .
\end{equation*}%
In the above defining relation $m$ and $\omega $ are elements in $M_{y}$ and 
${}_{y}\Omega _{x}^{n},$ respectively. By equation (\ref{ec:conn}), it
easily follows that $\nabla ^{n}$ is well defined, that is 
\begin{equation*}
\nabla _{x}^{n}(m\cdot f\otimes _{\mathcal{C}}\zeta )=\nabla
_{x}^{n}(m\otimes _{\mathcal{C}}f\zeta ),
\end{equation*}%
for any $m\otimes f\otimes \zeta \in M_{y}\otimes {}_{y}\mathcal{C}%
_{z}\otimes {}_{z}\Omega _{x}^{n}$. Since $d^{\ast }$ satisfies the Leibniz
rule, we get%
\begin{equation*}
\nabla _{x}^{n+m}(m\otimes _{\mathcal{C}}\omega \zeta )=\nabla
_{z}^{n}(m\otimes _{\mathcal{C}}\omega )\cdot \zeta +(-1)^{n}m\otimes _{%
\mathcal{C}}\omega d\zeta ,
\end{equation*}%
for all $m\otimes \omega \otimes \zeta \in {}M_{y}\otimes {}_{y}\Omega
_{z}^{n}\otimes {}_{z}\Omega _{x}^{m}$. Now it is not difficult to prove
that the family $R(\nabla )=\{R(\nabla )_{x}\}_{x\in \mathcal{C}_{0}}$
defines a morphism of graded right $\Omega ^{\ast }$-modules, where 
\begin{equation*}
R^{n}(\nabla )_{x}:(M\otimes _{\mathcal{C}}\Omega ^{n})_{x}\rightarrow
(M\otimes _{\mathcal{C}}\Omega ^{n+2})_{x},\quad R^{n}(\nabla )_{x}=\nabla
_{x}^{n+1}\circ \nabla _{x}^{n}.
\end{equation*}%
We shall say that $R(\nabla )$ is the \emph{curvature} of $\nabla $.

Note that $M_{x}$ can be identified with $(M\otimes _{\mathcal{C}}\Omega
^{0})_{x}$ via the map $m\mapsto m\otimes _{\mathcal{C}}1_{x}.$ On the other
hand, if $\{m_{i}\}_{i\in I}$ is a set of generators of $M,$ such that $%
m_{i} $ belongs to a certain $M_{x_{i}},$ then $M\otimes _{\mathcal{C}%
}\Omega ^{\ast }$ is generated by $\{m_{i}\otimes _{\mathcal{C}%
}1_{x_{i}}\}_{i\in I}. $ It follows that the curvature of $\nabla $ in
completely determined by its values$\ $at each $m_{i}.$ Using the matrix $%
\Lambda =\left( \lambda _{ij}\right) _{i,j\in I}$ that corresponds to $%
\nabla $ and taking into account (\ref{ec:lambda}), we obtain%
\begin{equation*}
R(\nabla )_{x_{i}}(m_{i})=\textstyle\sum\limits_{j\in I}\nabla (m_{j})\cdot
\lambda _{ji}+\textstyle\sum\limits_{j\in I}m_{j}\otimes _{\mathcal{C}%
}d\lambda _{ji}=\textstyle\sum\limits_{j,k\in I}m_{j}\otimes _{\mathcal{C}%
}\lambda _{jk}\lambda _{ki}+\textstyle\sum\limits_{j\in I}m_{j}\otimes _{%
\mathcal{C}}d\lambda _{ji}.
\end{equation*}%
Hence $R(\nabla )$ satisfies the relation%
\begin{equation}
R(\nabla )_{x_{i}}(m_{i})=\textstyle\sum\limits_{j\in I}m_{j}\otimes _{%
\mathcal{C}}\gamma _{ji},  \label{ec:R}
\end{equation}%
where $\gamma _{ij}=d\lambda _{ij}+\textstyle\sum\nolimits_{k\in I}\lambda
_{ik}\lambda _{kj}.$ If $\Gamma =(\gamma _{ij})_{i,j\in I}$, then the
relations that define the elements of $\Gamma $ are equivalent to the matrix
equation%
\begin{equation}
\Gamma =d\Lambda +\Lambda ^{2}.  \label{ec:Gamma}
\end{equation}%
For example, let $\nabla ^{\Lambda }$ be the connection on $\mathcal{C}(%
\mathcal{I})$ that we constructed in \S \ref{fa:ExConn} for a matrix $%
\Lambda =(\lambda _{ij})_{i,j\in I}$. Since $\{e_{i}\}_{i\in I}$ generates $%
\mathcal{C}(\mathcal{I}),$ and the matrix corresponding to $\nabla ^{\Lambda
}$ is precisely $\Lambda ,$ the curvature $R(\nabla ^{\Lambda })$ verifies
the formula (\ref{ec:R}), in which one substitutes $m_{i}$ by $e_{i}$, and $%
\Gamma =(\gamma _{ij})_{i,j\in I}$ is given by (\ref{ec:Gamma}).

In a similar way one computes the curvature of the Levi-Civita connection on
a finitely generated projective $\mathcal{C}$-module $M$. If $%
\{m_{i}\}_{i\in I}$ and $\{\varphi ^{i}\}_{i\in I}$ are dual bases, then by
equation (\ref{ec:LC}) we have%
\begin{equation*}
\nabla _{x_{i}}^{LC}(m_{i})=\textstyle\sum\limits_{j\in I}m_{j}\otimes _{%
\mathcal{C}}d\varphi _{x_{i}}^{j}(m_{i}).
\end{equation*}%
Hence, for the curvature of $\nabla ^{LC},$ the elements of the matrix $%
\Lambda $ are $\lambda _{ij}:=d\varphi _{x_{i}}^{j}(m_{i}).$ Since $d\Lambda
=0$, we have $\Gamma =\Lambda ^{2}.$ Thus, the curvature of the Levi-Civita
connection verifies the equations%
\begin{equation*}
R(\nabla )_{x_{i}}(m_{i})=\textstyle\sum\limits_{j,k\in I}m_{j}\otimes _{%
\mathcal{C}}d\varphi _{x_{k}}^{j}(m_{k})d\varphi _{x_{i}}^{k}(m_{i}).
\end{equation*}
\end{fact}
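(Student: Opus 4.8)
The plan is to prove the several assertions bundled in the statement in the following order: first, that each $\nabla_x^n$ descends to a well-defined map on the quotient space $(M\otimes_{\mathcal{C}}\Omega^n)_x$; second, that the total degree-one operator $\nabla$ obeys a graded Leibniz rule with respect to the right $\Omega^{\ast}$-action; third, that $R(\nabla)=\nabla^{n+1}\circ\nabla^n$ is $\Omega^{\ast}$-linear, which is exactly the assertion that it is a morphism of graded right $\Omega^{\ast}$-modules; and finally, that since $M\otimes_{\mathcal{C}}\Omega^{\ast}$ is generated over $\Omega^{\ast}$ by the elements $m_i\otimes_{\mathcal{C}}1_{x_i}$, the curvature is completely encoded by the matrix identity $\Gamma=d\Lambda+\Lambda^{2}$ recorded in \eqref{ec:Gamma}.

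For well-definedness I would check that $\nabla_x^n$ respects the single defining relation of $M\otimes_{\mathcal{C}}\Omega^n$, that is $\nabla_x^n(m\cdot f\otimes_{\mathcal{C}}\zeta)=\nabla_x^n(m\otimes_{\mathcal{C}}f\zeta)$ for $m\in M_y$, $f\in{}_y\mathcal{C}_z$ and $\zeta\in{}_z\Omega_x^n$. Expanding the left-hand side by using the connection axiom \eqref{ec:conn} to rewrite $\nabla(m\cdot f)$, and the right-hand side by using the graded Leibniz rule \eqref{ec:Leibniz2} to rewrite $d(f\zeta)$ (here $\left\vert f\right\vert =0$, so the product rule for $d$ contributes no sign on the $f$-factor), one sees that both expressions reduce to the same sum after a further use of the tensor relation. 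The graded product rule
\begin{equation*}
\nabla_x^{n+m}(m\otimes_{\mathcal{C}}\omega\zeta)=\nabla_z^{n}(m\otimes_{\mathcal{C}}\omega)\cdot\zeta+(-1)^{n}m\otimes_{\mathcal{C}}\omega\,d\zeta
\end{equation*}
is obtained in the same spirit, by expanding the definition of $\nabla_x^{n+m}$ and applying \eqref{ec:Leibniz2} to $d(\omega\zeta)$; conceptually it says that $\nabla$ is a degree-one operator satisfying a Leibniz rule on the graded right $\Omega^{\ast}$-module $M\otimes_{\mathcal{C}}\Omega^{\ast}$.

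The heart of the argument is the $\Omega^{\ast}$-linearity of $R(\nabla)=\nabla\circ\nabla$. Writing $\xi$ for an element of $M\otimes_{\mathcal{C}}\Omega^n$ and abbreviating the product rule as $\nabla(\xi\cdot\zeta)=\nabla(\xi)\cdot\zeta+(-1)^{\left\vert \xi\right\vert }\xi\cdot d\zeta$, I would apply $\nabla$ a second time to both sides. Since $\left\vert \nabla(\xi)\right\vert =\left\vert \xi\right\vert +1$, the first term produces $R(\xi)\cdot\zeta+(-1)^{\left\vert \xi\right\vert +1}\nabla(\xi)\cdot d\zeta$, while the second produces $(-1)^{\left\vert \xi\right\vert }\nabla(\xi)\cdot d\zeta$, the remaining contribution $\xi\cdot d(d\zeta)$ being zero because $d^2=0$. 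The two cross-terms carry opposite signs and cancel, leaving $R(\xi\cdot\zeta)=R(\xi)\cdot\zeta$. This cancellation, resting on $d^{2}=0$, is the only delicate point; it is the categorical counterpart of the classical fact that the curvature is tensorial, and it is precisely what certifies that $R(\nabla)$ is a morphism of graded right $\Omega^{\ast}$-modules.

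Finally, because $R(\nabla)$ is right $\Omega^{\ast}$-linear and $M\otimes_{\mathcal{C}}\Omega^{\ast}$ is generated over $\Omega^{\ast}$ by $\{m_i\otimes_{\mathcal{C}}1_{x_i}\}_{i\in I}$, the curvature is determined by its values $R(\nabla)_{x_i}(m_i)$. Using $d(1_{x_i})=0$ one computes $\nabla_{x_i}^{0}(m_i\otimes_{\mathcal{C}}1_{x_i})=\nabla_{x_i}(m_i)=\sum_{j}m_j\otimes_{\mathcal{C}}\lambda_{ji}$ by \eqref{ec:lambda}; applying $\nabla^{1}$, rewriting each $\nabla(m_j)\cdot\lambda_{ji}$ again by \eqref{ec:lambda} as $\sum_{k}m_k\otimes_{\mathcal{C}}\lambda_{kj}\lambda_{ji}$, and collecting coefficients yields $R(\nabla)_{x_i}(m_i)=\sum_{j}m_j\otimes_{\mathcal{C}}\gamma_{ji}$ with $\gamma_{ji}=d\lambda_{ji}+\sum_{k}\lambda_{jk}\lambda_{ki}$, which is the matrix equation $\Gamma=d\Lambda+\Lambda^{2}$ of \eqref{ec:Gamma}. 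The formulas for $\nabla^{\Lambda}$ on $\mathcal{C}(\mathcal{I})$ and for the Levi-Civita connection then follow at once by substituting the corresponding matrices $\Lambda$.
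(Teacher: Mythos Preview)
Your proposal is correct and follows exactly the route the paper sketches: the paper merely asserts that well-definedness ``easily follows'' from \eqref{ec:conn}, that the graded Leibniz identity is a consequence of \eqref{ec:Leibniz2}, and that it is ``not difficult'' to see $R(\nabla)$ is right $\Omega^{\ast}$-linear, and then performs the same generator computation leading to \eqref{ec:R} and \eqref{ec:Gamma}. You have supplied the details the paper omits---in particular the sign cancellation $(-1)^{|\xi|+1}+(-1)^{|\xi|}=0$ (together with $d^{2}=0$) that makes $R(\nabla)$ tensorial---so there is nothing to add.
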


\begin{fact}[The powers of $R(\protect\nabla )$.]
Let $\nabla :M\longrightarrow M\otimes _{\mathcal{C}}\Omega ^{1}$ be a
connection on $M$. We have seen that $R(\nabla )$ is an endomorphism of $%
M\otimes _{\mathcal{C}}\Omega ^{\ast }$ of degree $2$. Thus $R(\nabla )^{q},$
the $q^{\text{th}}$ power of $R(\nabla ),$ is an endomorphism of degree $2q$%
. Let $\Lambda :=\left( \lambda _{ij}\right) _{i,j\in I}$ be the matrix
associated to $\nabla $ with respect to a set of generators $\{m_{i}\}_{i\in
I}.$ By induction on $q$, it follows that the elements $R(\nabla
)_{x_{i}}^{q}(m_{i})\in (M\otimes _{\mathcal{C}}\Omega ^{2q})_{x_{i}}$ are
given by the relation%
\begin{equation}
R(\nabla )_{x_{i}}^{q}(m_{i})=\textstyle\sum\limits_{i_{1},\dots ,i_{q}\in
I}m_{i_{1}}\otimes _{\mathcal{C}}\gamma _{i_{1}i_{2}}\gamma
_{i_{2}i_{3}}\cdots \gamma _{i_{q-1}i_{q}}\gamma _{i_{q}i},
\label{ec:R_nabla}
\end{equation}%
where $\Gamma =(\gamma _{ij})_{i,j\in I}$ is defined by the formula (\ref%
{ec:Gamma}).

For the connection $\nabla ^{\Lambda }$ associated to the matrix $\Lambda
=(\lambda _{ij})_{i,j\in I}$ we have seen that $\Gamma =(\gamma
_{ij})_{i,j\in I}$ is given by $\Gamma =d\Lambda +\Lambda ^{2},$ so%
\begin{equation*}
R(\nabla ^{\Lambda })_{x_{i}}^{q}(e_{i})=\textstyle\sum\limits_{i_{1},\dots
,i_{q}\in I}e_{i_{1}}\otimes _{\mathcal{C}}\gamma _{i_{1}i_{2}}\gamma
_{i_{2}i_{3}}\cdots \gamma _{i_{q-1}i_{q}}\gamma _{i_{q}i}.
\end{equation*}%
Let $\{m_{i}\}_{i\in I}$ and $\{\varphi ^{i}\}_{i\in I}$ be dual bases on a
finitely generated projective module $M.$ For the Levi-Civita connection on $%
M,$ we get%
\begin{equation*}
R(\nabla ^{LC})_{x_{i}}^{q}(m_{i})=\textstyle\sum\limits_{i_{1},\dots
,i_{2q}\in I}m_{i_{1}}\otimes _{\mathcal{C}}d\varphi
_{x_{i_{2}}}^{i_{1}}(m_{i_{2}})\cdots d\varphi
_{x_{i_{_{2q}}}}^{i_{2q-1}}(m_{i_{2q}})d\varphi _{x_{i}}^{i_{2q}}(m_{i}).
\end{equation*}
\end{fact}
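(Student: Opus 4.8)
The plan is to establish \eqref{ec:R_nabla} by induction on $q$. Everything needed is already in place: it was shown above that $R(\nabla )$ is a morphism of graded right $\Omega ^{\ast }$-modules, and that, via the identification $M_{x}\cong (M\otimes _{\mathcal{C}}\Omega ^{0})_{x}$, $m\mapsto m\otimes _{\mathcal{C}}1_{x}$, the $\Omega ^{\ast }$-module $M\otimes _{\mathcal{C}}\Omega ^{\ast }$ is generated by the finitely many elements $m_{i}\otimes _{\mathcal{C}}1_{x_{i}}$. For $q=1$ the asserted formula coincides with \eqref{ec:R}, which has already been verified, with $\gamma _{ij}=d\lambda _{ij}+\sum _{k\in I}\lambda _{ik}\lambda _{kj}$ as in \eqref{ec:Gamma}.

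For the inductive step, assume \eqref{ec:R_nabla} holds for a fixed $q\geq 1$. Writing $R(\nabla )^{q+1}=R(\nabla )\circ R(\nabla )^{q}$ as endomorphisms of $M\otimes _{\mathcal{C}}\Omega ^{\ast }$, I would apply $R(\nabla )$ to the degree-$2q$ element $R(\nabla )_{x_{i}}^{q}(m_{i})=\sum _{i_{1},\dots ,i_{q}\in I}m_{i_{1}}\otimes _{\mathcal{C}}\gamma _{i_{1}i_{2}}\cdots \gamma _{i_{q}i}$ supplied by the hypothesis. Rewriting each summand as $\bigl(m_{i_{1}}\otimes _{\mathcal{C}}1_{x_{i_{1}}}\bigr)\cdot \bigl(\gamma _{i_{1}i_{2}}\cdots \gamma _{i_{q}i}\bigr)$ and invoking right $\Omega ^{\ast }$-linearity of $R(\nabla )$ (no signs arise, since $R(\nabla )$ has even degree), one obtains
\begin{equation*}
R(\nabla )_{x_{i}}^{q+1}(m_{i})=\sum _{i_{1},\dots ,i_{q}\in I}R(\nabla )_{x_{i_{1}}}(m_{i_{1}})\cdot \bigl(\gamma _{i_{1}i_{2}}\cdots \gamma _{i_{q}i}\bigr).
\end{equation*}
Now substitute the already established case $q=1$, namely $R(\nabla )_{x_{i_{1}}}(m_{i_{1}})=\sum _{j\in I}m_{j}\otimes _{\mathcal{C}}\gamma _{ji_{1}}$, turning the right-hand side into $\sum _{j,i_{1},\dots ,i_{q}\in I}m_{j}\otimes _{\mathcal{C}}\gamma _{ji_{1}}\gamma _{i_{1}i_{2}}\cdots \gamma _{i_{q}i}$; relabelling $(j,i_{1},\dots ,i_{q})$ as $(i_{1},i_{2},\dots ,i_{q+1})$ gives exactly \eqref{ec:R_nabla} with $q$ replaced by $q+1$, which completes the induction.

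The computation is really just formal; I do not expect a serious obstacle, the delicate point being only the index bookkeeping in the final relabelling, together with the (already justified) legitimacy of evaluating the morphism $R(\nabla )$ on chosen representatives of the classes $m_{i_{1}}\otimes _{\mathcal{C}}(\cdots )$. Finally, the two displayed formulas for the examples follow by specializing \eqref{ec:R_nabla}: for $\nabla ^{\Lambda }$ one takes the generators $\{e_{i}\}_{i\in I}$ and $\Gamma =d\Lambda +\Lambda ^{2}$; for the Levi-Civita connection one has $d\Lambda =0$, hence $\Gamma =\Lambda ^{2}$, so expanding each $\gamma _{i_{a}i_{a+1}}=\sum _{k\in I}\lambda _{i_{a}k}\lambda _{ki_{a+1}}$ in \eqref{ec:R_nabla} and inserting the values of the $\lambda _{ij}$ recorded above yields the stated sum over the $2q$ indices $i_{1},\dots ,i_{2q}$.
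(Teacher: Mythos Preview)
Your proposal is correct and follows exactly the approach the paper indicates: the paper simply writes ``By induction on $q$'' and states \eqref{ec:R_nabla}, and you have supplied precisely the intended inductive step, using the right $\Omega^{\ast}$-linearity of $R(\nabla)$ together with the base case \eqref{ec:R}. The specializations to $\nabla^{\Lambda}$ and $\nabla^{LC}$ are likewise just the substitutions the paper makes, so there is nothing to add.
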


\section{de Rham cohomology and the Chern map.}

In this section, which is the main part of our paper, we define the de Rham
cohomology of a linear category $\mathcal{C}$ with coefficients in a $DG$%
-category $\Omega ^{\ast }$. Our goal is to associate to every finitely
generated projective $\mathcal{C}$-module $M$ some de Rham cohomology
classes, that will lead us to the construction of the Chern map.

\begin{fact}[de Rham cohomology $H^{\ast }(\mathcal{C},\Omega ^{\ast }).$]
We fix a $\Bbbk $-linear category $\mathcal{C}$, and we suppose that $%
(\Omega ^{\ast },d^{\ast })$ is a $DG$-category such that $\Omega ^{0}=%
\mathcal{C}.$ For $\omega \in {}_{x}\Omega _{y}^{p}$ and $\zeta \in
{}_{y}\Omega _{x}^{n-p}$ we define the graded commutator%
\begin{equation}
\lbrack \omega ,\zeta ]:=\omega \zeta -(-1)^{p(n-p)}\zeta \omega ,
\label{ec:grcomm}
\end{equation}%
which is an element in $\oplus _{x\in \Omega _{0}^{\ast }\,}{}_{x}\Omega
_{x}^{n}$. The subspace spanned by all graded commutators in $\oplus _{x\in
\Omega _{0}^{\ast }}{}_{x}\Omega _{x}^{n}$ will be denoted by $[\Omega
^{\ast },\Omega ^{\ast }]^{n}.$ Note the usual commutator $\omega \zeta
-\zeta \omega $ makes sense in $\Omega ^{\ast }$ as well. However, in a
graded linear category, we shall always use the notation $[\omega ,\zeta ]$
for the graded commutator of $\omega $ and $\zeta .$

For a $DG$-category $(\Omega ^{\ast },d^{\ast })$ as above we now define 
\begin{equation*}
\Omega _{ab}^{n}:=(\textstyle\bigoplus_{x\in \mathcal{C}_{0}}{}_{x}\Omega
_{x}^{n})/\left[ \Omega ^{\ast },\Omega ^{\ast }\right] ^{n}.
\end{equation*}%
For example, $[\Omega ^{\ast },\Omega ^{\ast }]^{0}$ coincides with the
subspace $[\mathcal{C},\mathcal{C}],$ used in \S \ref{fa: fgp} to define $%
\mathcal{C}_{ab},$ the target of the Hattori-Stallings trace map. Hence, $%
\Omega _{ab}^{0}=\mathcal{C}_{ab}.$

As a consequence of the Leibniz rule, we immediately deduce that $d^{n}$
maps commutators to commutators. Hence $d^{n}$ factorizes through a linear
map $d_{ab}^{n}:\Omega _{ab}^{n}\rightarrow \Omega _{ab}^{n+1}.$ Obviously,
the sequence%
\begin{equation*}
\xymatrix{ 0\ar[r] & \Omega^0_{ab}\ar[r]^-{d^0_{ab}} &
\Omega^1_{ab}\ar[r]^-{d^1_{ab}} & \dotsb \ar[r] &
\Omega^n_{ab}\ar[r]^-{d^n_{ab}} &\Omega^{n+1}_{ab}\ar[r] &\dotsb }
\end{equation*}%
is a cochain complex $(\Omega _{ab}^{\ast },d_{ab}^{\ast }),$ that will be
called the de Rham complex. By definition, the de Rham cohomology $%
H_{DR}^{\ast }(\mathcal{C},\Omega ^{\ast })$ of $\mathcal{C}$ with respect
to the $DG$-category $(\Omega ^{\ast },d^{\ast })$ is the cohomology of $%
(\Omega _{ab}^{\ast },d_{ab}^{\ast })$.
\end{fact}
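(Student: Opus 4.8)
The plan is to establish the two things that the sentence ``$d^{n}$ maps commutators to commutators, hence it factorizes through $d_{ab}^{n}\colon\Omega_{ab}^{n}\to\Omega_{ab}^{n+1}$, and the resulting sequence is a cochain complex'' actually asserts: first, that $d^{n}$ sends the subspace $[\Omega^{\ast},\Omega^{\ast}]^{n}$ into $[\Omega^{\ast},\Omega^{\ast}]^{n+1}$, so that it descends to a well-defined linear map $d_{ab}^{n}$ on the quotients $\Omega_{ab}^{n}$; and second, that $d_{ab}^{n+1}\circ d_{ab}^{n}=0$.

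For the first point I would argue as follows. If $\eta\in{}_{x}\Omega_{x}^{n}$ then $d^{n}(\eta)\in{}_{x}\Omega_{x}^{n+1}$, so $d^{\ast}$ does restrict to a $\Bbbk$-linear map $\bigoplus_{x\in\mathcal{C}_{0}}{}_{x}\Omega_{x}^{n}\to\bigoplus_{x\in\mathcal{C}_{0}}{}_{x}\Omega_{x}^{n+1}$. Since $[\Omega^{\ast},\Omega^{\ast}]^{n}$ is spanned by the graded commutators $[\omega,\zeta]=\omega\zeta-(-1)^{p(n-p)}\zeta\omega$ with $\omega\in{}_{x}\Omega_{y}^{p}$ and $\zeta\in{}_{y}\Omega_{x}^{n-p}$, and $d^{\ast}$ is linear, it is enough to check that $d^{n}([\omega,\zeta])$ is again a sum of graded commutators of degree $n+1$. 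Applying the Leibniz rule \eqref{ec:Leibniz2} to the two products $\omega\zeta$ and $\zeta\omega$ and collecting terms, one is led to the identity
\begin{equation*}
d^{n}\bigl([\omega,\zeta]\bigr)=[d\omega,\zeta]+(-1)^{p}[\omega,d\zeta],
\end{equation*}
where on the right $[d\omega,\zeta]$ denotes the degree-$(n+1)$ graded commutator of $d\omega\in{}_{x}\Omega_{y}^{p+1}$ with $\zeta$, and $[\omega,d\zeta]$ that of $\omega$ with $d\zeta\in{}_{y}\Omega_{x}^{n-p+1}$; both terms lie in $[\Omega^{\ast},\Omega^{\ast}]^{n+1}$ by definition. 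Hence $d^{n}$ carries $[\Omega^{\ast},\Omega^{\ast}]^{n}$ into $[\Omega^{\ast},\Omega^{\ast}]^{n+1}$ and therefore factors through a linear map $d_{ab}^{n}\colon\Omega_{ab}^{n}\to\Omega_{ab}^{n+1}$ making the obvious square with the quotient projections commute.

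For the second point, $d_{ab}^{n+1}\circ d_{ab}^{n}$ is the map induced on the quotients by $d^{n+1}\circ d^{n}$, which vanishes by the defining property $d^{n+1}\circ d^{n}=0$ of a differential on a $DG$-category; since the quotient projections are surjective, $d_{ab}^{n+1}\circ d_{ab}^{n}=0$ as well. This gives the cochain complex $(\Omega_{ab}^{\ast},d_{ab}^{\ast})$, whose cohomology is by definition $H_{DR}^{\ast}(\mathcal{C},\Omega^{\ast})$; one also records, as noted in the text, that $[\Omega^{\ast},\Omega^{\ast}]^{0}=[\mathcal{C},\mathcal{C}]$ and hence $\Omega_{ab}^{0}=\mathcal{C}_{ab}$, so the complex starts at the target of the Hattori--Stallings trace.

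The only step requiring any care is the sign bookkeeping in the displayed identity: expanding both sides, it comes down to the elementary congruences $(p+1)(n-p)\equiv p(n-p)+(n-p)$ and $p+p(n-p+1)\equiv p(n-p)\pmod 2$. There is no genuine obstacle here; the whole statement is a formal consequence of the graded Leibniz rule together with $d\circ d=0$, and the real work of the section will come afterwards, when one has to show that $\Tr_{M}(R(\nabla)^{q})$ is a cocycle in this complex.
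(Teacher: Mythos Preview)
Your proposal is correct and follows exactly the approach the paper indicates: the paper merely asserts that ``as a consequence of the Leibniz rule'' $d^{n}$ maps commutators to commutators and that ``obviously'' the resulting sequence is a cochain complex, without writing out any details. Your explicit identity $d^{n}([\omega,\zeta])=[d\omega,\zeta]+(-1)^{p}[\omega,d\zeta]$ and the sign check are precisely the computation the paper is suppressing, and the rest (factoring through the quotient, $d_{ab}^{n+1}\circ d_{ab}^{n}=0$) is immediate from $d\circ d=0$ as you say.
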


\begin{fact}[The cocycles $\Ch^{\ast}(M,\protect\nabla)$.]
\label{fa:Chern} Our aim now is to associate certain cohomology classes in $%
H_{DR}^{\ast }(\mathcal{C},\Omega ^{\ast })$ to any finitely generated
projective $\mathcal{C}$-module $M$ which is endowed with a connection $%
\nabla :M\longrightarrow M\otimes _{\mathcal{C}}\Omega ^{1}.$

First, we remark that the subspaces $\oplus {}_{n\in \mathbb{N\,}%
}{}_{x}\Omega _{y}^{2n}$ define a linear subcategory $\Omega ^{2\ast }$ of $%
\Omega ^{\ast },$ which contains $\mathcal{C}.$ The vector space $\left(
\Omega ^{2\ast }\right) _{ab},$ that corresponds to the subcategory of even
forms, is the quotient of the coproduct of the family $\{_{x}\Omega
_{x}^{2q}\}_{(q,x)\in \mathbb{N}\times \mathcal{C}_{0}}$ through the
subspace spanned by the commutators of two even forms. On the other hand, $%
\Omega _{ab}^{2q}$ is obtained by killing all commutators of degree $2q$ in $%
\oplus _{x\in \mathcal{C}_{0}\,x}\Omega _{x}^{2q},$ including those that
corresponds to two odd forms. Hence, $\left( \Omega ^{2\ast }\right) _{ab}$
and $\oplus _{q\geq 0}\Omega _{ab}^{2q}$ are not identical, but there is a
canonical linear transformation from the former vector space to the latter
one, that respects the canonical $\mathbb{N}$-gradings, in the sense that
the equivalence class of a form $\omega \in {}_{x}\Omega _{x}^{2q}$ is
mapped to its equivalence class in $\Omega _{ab}^{2q}.$ In conclusion, for
any finitely generated projective right $\Omega ^{2\ast }$-module $N,$ we
can compose the Hattori-Stallings trace map $\Tr_{N}:\End_{\Omega ^{2\ast
}}(N)\rightarrow $ $\left( \Omega ^{2\ast }\right) _{ab}$ with the above
canonical map. We still denote the resulting map by $\Tr_{N}$. Throughout
the remaining part of this paper we shall work only with this new trace map,
whose codomain is $\oplus _{q\geq 0}\Omega _{ab}^{2q}.$

Recall that, by assumption, $M$ is a finitely generated projective $\mathcal{%
C}$-module. Thus, the right $\Omega ^{\ast }$-module $M\otimes _{\mathcal{C}%
}\Omega ^{2\ast }$ has the same properties. Therefore, it makes sense to
speak about the trace of its endomorphisms, which are elements of $\oplus
_{q\geq 0}\Omega _{ab}^{2q},\ $in view of the foregoing remarks.

We have seen that $R(\nabla )^{q}$ is an endomorphism of $M\otimes _{%
\mathcal{C}}\Omega ^{\ast }$ of degree $2q.$ Thus, it induces a homogeneous
endomorphism of $M\otimes _{\mathcal{C}}\Omega ^{2\ast }\ $of the same
degree, so we can compute its trace%
\begin{equation*}
\Ch^{q}(M,\nabla ):=\Tr_{M\otimes _{\mathcal{C}}\Omega ^{2\ast }}\left(
R(\nabla )^{q}\right) .
\end{equation*}%
We claim that $\Ch^{q}(M,\nabla )$ is a $2q$-cochain in the de Rham complex $%
(\Omega _{ab}^{\ast },d_{ab}^{\ast }).$ Let $\{m_{i}\}_{i\in I}$ and $%
\{\varphi ^{i}\}_{i\in I}$ be dual bases on $M$ such that each $m_{i}$
belongs to a certain component $M_{x_{i}}.$ Then $\{m_{i}\otimes _{\mathcal{C%
}}1_{x_{i}}\}_{i\in I}$ and $\{\varphi ^{i}\otimes _{\mathcal{C}}\Id_{\Omega
^{2\ast }}\}_{i\in I}$ are dual bases on $M\otimes _{\mathcal{C}}\Omega
^{2\ast }.$ Therefore, by the definition of the trace and the relation (\ref%
{ec:R_nabla}) we get that%
\begin{align*}
\Ch^{q}(M,\nabla )& =\textstyle\sum_{i_{0}\in I}\left( \varphi
^{i_{0}}\otimes _{\mathcal{C}}\Id_{\Omega ^{2\ast }}\right) \left( R(\nabla
)^{q}(m_{i_{0}})\right) +\left[ \Omega ^{\ast },\Omega ^{\ast }\right] ^{2q}
\\
& =\textstyle\sum\limits_{i_{0},i_{1},\dots ,i_{q}=1}\varphi
_{x_{i_{1}}}^{i_{0}}(m_{i_{1}})\gamma _{i_{1}i_{2}}\ldots \gamma
_{i_{q-1}i_{q}}\gamma _{i_{q}i_{0}}+\left[ \Omega ^{\ast },\Omega ^{\ast }%
\right] ^{2q}.
\end{align*}%
In particular, our claim has been proved, as $\Ch^{q}(M,\nabla )$ is the
class of $\omega ^{q}(M,\nabla )$ in $\Omega _{ab}^{2q}$, where%
\begin{equation}
\omega ^{q}(M,\nabla )=\textstyle\sum\limits_{i_{0},i_{1},\dots
,i_{q}=1}\varphi _{x_{i_{1}}}^{i_{0}}(m_{i_{1}})\gamma _{i_{1}i_{2}}\ldots
\gamma _{i_{q-1}i_{q}}\gamma _{i_{q}i_{0}}.  \label{ec:wq}
\end{equation}%
Note that, since the trace of an endomorphism does not depend on the choice
of the dual bases on $M$, the class of $\omega ^{q}(M,\nabla )$ in $\Omega
_{ab}^{\ast }$ is also independent of such a choice.

On the projective and finitely generated module $\mathcal{C}(\mathcal{I})$
we take the dual bases $\{e_{i}\}_{z\in I}$ and $\{\varphi ^{i}\}_{i\in I}.$
The components of the morphism $\varphi ^{i}$ are the canonical projections $%
\varphi _{x}^{i}:\,_{x_{i}}\mathcal{C}(\mathcal{I})_{x}\rightarrow \,_{x_{i}}%
\mathcal{C}_{x}$ that maps $\{f_{i}\}_{i\in I}$ to $f_{i}.$ On $\mathcal{C}(%
\mathcal{I})$ we take, as usual, the connection $\nabla ^{\Lambda }$
associated to a matrix $\Lambda =(\lambda _{ij})_{i,j\in I},$ with $\lambda
_{ij}\in \,_{x_{i}}\Omega _{x_{j}}^{1}.$ Since the matrix that corresponds
to $\nabla ^{\Lambda }$ with respect to the generating set $\{e_{i}\}_{i\in
I}$ is $\Lambda ,$ and $\varphi _{x_{i_{1}}}^{i_{0}}(e_{i_{1}})=\delta
_{i_{0},i_{1}}1_{x_{i_{1}}},$ the equation (\ref{ec:wq}) is equivalent in
this setting to%
\begin{equation}
\omega ^{q}(M,\nabla ^{\Lambda })=\textstyle\sum\limits_{i_{1},\ldots
,i_{q}\in I}\gamma _{i_{1}i_{2}}\ldots \gamma _{i_{q-1}i_{q}}\gamma
_{i_{q}i_{1}},  \label{ec:wLambda}
\end{equation}%
where $\Gamma =(\gamma _{ij})_{i,j\in I}$ is given $\Gamma =d\Lambda
+\Lambda ^{2}.$

With respect to the dual bases $\{m_{i}\}_{i\in I}$ and $\{\varphi
^{i}\}_{i\in I},$ the Levi-Civita connection $\nabla ^{LC}$ on a finitely
generated projective module $M$ has the matrix $\Lambda :=(d\varphi
_{x_{j}}^{i}(m_{j}))_{i,j\in I}.$ It follows that $\omega ^{q}(M,\nabla
^{LC})$ satisfies the following equation%
\begin{equation}
\omega ^{q}(M,\nabla ^{LC})=\textstyle\sum\limits_{i_{0},i_{1},\dots
,i_{2q}}\varphi _{x_{i_{1}}}^{i_{0}}(m_{{i_{1}}})d\varphi
_{x_{i_{2}}}^{i_{1}}(m_{{i_{2}}})\cdots d\varphi _{x_{i_{2q}}}^{i_{2q-1}}(m_{%
{i_{2q}}})d\varphi _{x_{i_{0}}}^{i_{2q}}(m_{{i_{0}}}).  \label{ec:wLC}
\end{equation}
\end{fact}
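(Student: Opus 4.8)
The statement to be proved is that $\Ch^q(M,\nabla)$, the class of the form $\omega^{q}(M,\nabla)$ of \eqref{ec:wq}, is a $2q$-cocycle of the de Rham complex, i.e. $d^{2q}_{ab}\bigl(\Ch^q(M,\nabla)\bigr)=0$. The plan is to run the Chern--Weil argument in operator form. Let $\mathcal{E}^{\ast}$ be the graded algebra whose degree-$n$ part consists of the $\Omega^{\ast}$-linear endomorphisms of $M\otimes_{\mathcal{C}}\Omega^{\ast}$ of degree $n$; since $M$, hence $M\otimes_{\mathcal{C}}\Omega^{\ast}$, is finitely generated projective, it carries the Hattori--Stallings trace $\Tr\colon\mathcal{E}^{\ast}\to\Omega^{\ast}_{ab}$, and the trace of $R(\nabla)^{q}\in\mathcal{E}^{2q}$ is exactly $\Ch^q(M,\nabla)\in\Omega^{2q}_{ab}$ (passing from $\Omega^{2\ast}$ to all of $\Omega^{\ast}$ changes nothing, $R(\nabla)^{q}$ being homogeneous). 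Assembling the maps $\nabla^{n}$ from the curvature paragraph yields a degree-$1$ operator $\overline{\nabla}$ on $M\otimes_{\mathcal{C}}\Omega^{\ast}$, and I would first check that $\delta(u):=\overline{\nabla}\circ u-(-1)^{|u|}u\circ\overline{\nabla}$ sends $\mathcal{E}^{n}$ into $\mathcal{E}^{n+1}$ (the would-be non-$\Omega^{\ast}$-linear terms cancel thanks to \eqref{ec:conn}) and is a graded derivation of $\mathcal{E}^{\ast}$.

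The argument then hinges on two facts. First, the \emph{Bianchi identity}: $\delta\bigl(R(\nabla)\bigr)=\overline{\nabla}\circ\overline{\nabla}\circ\overline{\nabla}-\overline{\nabla}\circ\overline{\nabla}\circ\overline{\nabla}=0$, because $R(\nabla)=\overline{\nabla}\circ\overline{\nabla}$; and, $R(\nabla)$ being of even degree, the derivation property upgrades this to $\delta\bigl(R(\nabla)^{q}\bigr)=0$. Second, the \emph{trace identity}: for every homogeneous $u\in\mathcal{E}^{\ast}$,
\[
\Tr\bigl(\delta(u)\bigr)=(-1)^{|u|}\,d^{\ast}_{ab}\bigl(\Tr(u)\bigr)\qquad\text{in }\Omega^{\ast}_{ab}.
\]
Granting these, the proof is immediate: $d^{2q}_{ab}\bigl(\Ch^q(M,\nabla)\bigr)=d^{\ast}_{ab}\bigl(\Tr(R(\nabla)^{q})\bigr)=\Tr\bigl(\delta(R(\nabla)^{q})\bigr)=\Tr(0)=0$, the middle equality being the trace identity for $u=R(\nabla)^{q}$, whose degree $2q$ is even.

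The hard part is the trace identity, which I would prove by moving to matrices. Fix dual bases $\{m_i\}_{i\in I}$, $\{\varphi^{i}\}_{i\in I}$ of $M$ and set $E=(e_{ij})$ with $e_{ij}=\varphi^{i}_{x_j}(m_j)$; then $E^{2}=E$, whence $E(dE)E=0$. Using $m_i\otimes_{\mathcal{C}}1_{x_i}=\sum_j m_j\otimes_{\mathcal{C}}e_{ji}$ and $\Omega^{\ast}$-linearity, any $u\in\mathcal{E}^{\ast}$ is described by a matrix $U$ of forms via $u(m_i\otimes_{\mathcal{C}}1_{x_i})=\sum_j m_j\otimes_{\mathcal{C}}U_{ji}$, and replacing $U$ by $EUE$ leaves the endomorphism unchanged, so I may assume $U=EUE$; then \eqref{ec:Tr} reads $\Tr(u)=\sum_i U_{ii}+[\Omega^{\ast},\Omega^{\ast}]$, the ordinary matrix trace. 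Writing $\Lambda$ for the matrix of $\nabla$ relative to $\{m_i\}$, a direct computation from the definition of $\nabla^{n}$ shows that $\delta(u)$ is represented by $\Lambda U+(-1)^{|u|}dU-(-1)^{|u|}U\Lambda$. Passing to matrix traces modulo graded commutators — using $E^{2}=E$, the relation $E(dE)E=0$, $U=EUE$, and graded cyclicity $\Tr_{\mathrm{mat}}(AB)\equiv(-1)^{|A||B|}\Tr_{\mathrm{mat}}(BA)$ — the two $\Lambda$-terms cancel and one is left with $\Tr(\delta(u))\equiv(-1)^{|u|}\Tr_{\mathrm{mat}}(E\,dU)\equiv(-1)^{|u|}\Tr_{\mathrm{mat}}(dU)=(-1)^{|u|}d^{\ast}_{ab}(\Tr(u))$. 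The main obstacle is exactly this bookkeeping: keeping the graded signs straight and verifying that every term containing $E$ or $\Lambda$ that is not visibly a commutator vanishes after reduction. One can also argue entirely inside $\Omega^{\ast}_{ab}$ by differentiating \eqref{ec:wq} term by term, inserting the Bianchi identity $d\gamma_{ij}=\sum_k(\gamma_{ik}\lambda_{kj}-\lambda_{ik}\gamma_{kj})$ together with the relation $E\Lambda(I-E)=E\,dE$ obtained by applying $\nabla$ to $m_i=\sum_j m_j e_{ji}$, and telescoping; this reaches the same conclusion but is more painful.

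The further results announced in the introduction follow from the same tools. For the independence of the cohomology class of $\Ch^q(M,\nabla)$ on $\nabla$: given connections $\nabla^{0},\nabla^{1}$, the difference $\alpha=\nabla^{1}-\nabla^{0}$ lies in $\mathcal{E}^{1}$, the family $\nabla^{t}=\nabla^{0}+t\alpha$ consists of connections with $\tfrac{d}{dt}R(\nabla^{t})=\delta^{(t)}(\alpha)$, and using $\delta^{(t)}(R(\nabla^{t}))=0$, graded cyclicity and the trace identity once more one obtains $\tfrac{d}{dt}\Tr\bigl(R(\nabla^{t})^{q}\bigr)=-q\,d^{\ast}_{ab}\bigl(\Tr(\alpha\,R(\nabla^{t})^{q-1})\bigr)$; integrating in $t$ exhibits $\Ch^q(M,\nabla^{1})-\Ch^q(M,\nabla^{0})$ as a de Rham coboundary (this step requires the positive integers up to $2q-1$ to be invertible in $\Bbbk$, in particular it is unconditional in characteristic zero). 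Additivity of $\Ch^q$ on direct sums is immediate from Lemma \ref{le:conn}(i) and the block form of the trace; together with the presentation of $K_{0}(\mathcal{C})$ by finitely generated projective modules this yields the homomorphism $[M]\mapsto\Ch^q(M)$ from $K_{0}(\mathcal{C})$ to $H^{2q}_{DR}(\mathcal{C},\Omega^{\ast})$.
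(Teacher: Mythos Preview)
Your proposal is correct, but you should be aware that you are proving considerably more than the Fact itself asserts. The only \emph{claim} inside \S\ref{fa:Chern} is that $\Ch^{q}(M,\nabla)$ is a $2q$-\emph{cochain}, i.e.\ that the trace lands in $\Omega^{2q}_{ab}$; the paper establishes this simply by writing down the explicit formula \eqref{ec:wq} via the dual bases, which is a definition-unwinding rather than an argument. The \emph{cocycle} statement you prove is the content of the Theorem immediately following \S\ref{fa:Chern}, and the independence and additivity statements are the two results after that.

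For the cocycle property your route is genuinely different from the paper's. The paper does not work with the operator $\overline{\nabla}$ or the derivation $\delta$ at all. Instead it writes $M\oplus N\cong\mathcal{C}(\mathcal{I})$ and splits the problem into two explicit matrix computations: for the free summand it proves the Bianchi-type identity $d(\Gamma^{q})=\Gamma^{q}\Lambda-\Lambda\Gamma^{q}$ and reads off that $\Tr(d\Gamma^{q})$ is a commutator; for the Levi-Civita piece on $N$ it uses the idempotent trick $\Pi=2\Psi-1$, $\Pi^{2}=1$, $\Pi(d\Psi)=-(d\Psi)\Pi$, which forces $\Tr((d\Psi)^{2q+1})$ to be a commutator. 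Your Chern--Weil argument (Bianchi $\delta(R(\nabla))=0$ plus the trace identity $\Tr(\delta u)=(-1)^{|u|}d_{ab}(\Tr u)$) handles an arbitrary projective $M$ and connection $\nabla$ in one stroke, without the direct-sum reduction; it also avoids the hypothesis $\mathrm{char}\,\Bbbk\neq 2$ that the paper needs for the $\Pi$-trick. The price you pay is the bookkeeping you flag: verifying that the matrix of $\delta(u)$ really is $E(\Lambda U+(-1)^{|u|}dU-(-1)^{|u|}U\Lambda)E$ and that the $E$- and $\Lambda$-terms die modulo graded commutators. This is correct, but it is exactly the place where a referee would ask for one more line.

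For the independence of the cohomology class on $\nabla$, the contrast is sharper. The paper builds two auxiliary $DG$-categories $\Omega^{\ast}[t]$ and $\widetilde{\Omega}^{\ast}$, shows that the evaluation maps $ev_{0},ev_{1}$ on the associated de Rham complexes are chain-homotopic via an explicit integration operator, and then places a single connection $\widetilde{\nabla}$ on $\mathcal{C}[t](\mathcal{I})$ whose evaluations at $0$ and $1$ recover the two Chern cocycles. Your transgression formula $\tfrac{d}{dt}\Tr(R(\nabla^{t})^{q})=-q\,d_{ab}\Tr(\alpha\,R(\nabla^{t})^{q-1})$ reaches the same conclusion more directly and stays inside $\Omega^{\ast}$; both arguments need characteristic zero (yours to integrate the polynomial in $t$, theirs for the homotopy operator $\int_{0}^{1}$). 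The paper's detour has the virtue of packaging the homotopy once and for all at the level of complexes, while your argument is shorter but re-derives the transgression each time.
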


\begin{theorem}
Let $\Bbbk $ be a field of characteristic different of $2.$ If $M$ is a
finitely generated projective $\mathcal{C}$-module and $\nabla :M\rightarrow
M\otimes _{\mathcal{C}}\Omega ^{1}$ is a connection on $M,$ then $\Ch%
^{q}(M,\nabla )$ is a $2q$-cocycle in the de Rham complex $(\Omega
_{ab}^{\ast },d_{ab}^{\ast }).$
\end{theorem}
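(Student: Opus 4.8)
The plan is to prove the equivalent statement that $d\,\omega^{q}(M,\nabla)\in[\Omega^{\ast},\Omega^{\ast}]^{2q+1}$, where $\omega^{q}(M,\nabla)$ is the explicit form \eqref{ec:wq}. Fix dual bases $\{m_{i}\}_{i\in I}$, $\{\varphi^{i}\}_{i\in I}$ with $m_{i}\in M_{x_{i}}$, and set $P:=(p_{ij})_{i,j\in I}$, $p_{ij}:=\varphi^{i}_{x_{j}}(m_{j})\in{}_{x_{i}}\mathcal{C}_{x_{j}}$, together with the connection matrix $\Lambda=(\lambda_{ij})$ of $\nabla$ and the curvature matrix $\Gamma=d\Lambda+\Lambda^{2}$ of \eqref{ec:Gamma}. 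Applying \eqref{ec:phi} to $m_{j}$ shows $P^{2}=P$, so $P$ is an idempotent matrix of degree-$0$ forms, and \eqref{ec:wq} becomes $\omega^{q}(M,\nabla)=\operatorname{tr}(P\,\Gamma^{q})$ after identifying the $x$-component of \eqref{ec:wq} with the corresponding diagonal entry; here $\operatorname{tr}$ is the naive matrix trace with values in $\bigoplus_{x}{}_{x}\Omega^{2q}_{x}$, followed by the projection onto $\Omega^{2q}_{ab}$. The one general fact about this trace that I would use repeatedly is its graded cyclicity modulo commutators: for homogeneous matrices of forms $A,B$ one has $\operatorname{tr}(AB)-(-1)^{|A||B|}\operatorname{tr}(BA)\in[\Omega^{\ast},\Omega^{\ast}]^{|A|+|B|}$, since each $a_{ik}b_{ki}-(-1)^{|a_{ik}||b_{ki}|}b_{ki}a_{ik}$ is a graded commutator in the sense of \eqref{ec:grcomm}.

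Two identities feed the computation. First, a Bianchi-type relation $d\Gamma=\Gamma\Lambda-\Lambda\Gamma$, a purely formal consequence of $\Gamma=d\Lambda+\Lambda^{2}$, $d^{2}=0$ and the Leibniz rule \eqref{ec:Leibniz2}. Second, the compatibility of $\nabla$ with the dual bases: applying $\nabla$ to $m_{j}=\sum_{k}m_{k}\cdot p_{kj}$ and then the morphisms $\varphi^{i}$ gives $P\Lambda=P\Lambda P+P\,dP$, equivalently $P\Lambda(1-P)=P\,dP$; I would also use the consequences $dP=(dP)P+P(dP)$ and $P(dP)P=0$ of $P^{2}=P$. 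Differentiating by the Leibniz rule for matrix products, and using $|\Gamma|=2$, gives $d\operatorname{tr}(P\Gamma^{q})=\operatorname{tr}\bigl((dP)\Gamma^{q}\bigr)+\textstyle\sum_{a=0}^{q-1}\operatorname{tr}\bigl(P\Gamma^{a}(d\Gamma)\Gamma^{q-1-a}\bigr)$, and substituting the Bianchi identity the sum telescopes to $\operatorname{tr}(P\Gamma^{q}\Lambda)-\operatorname{tr}(P\Lambda\Gamma^{q})$.

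For a free module $\mathcal{C}(\mathcal{I})$ one has $P=1$ and $dP=0$, so the surviving terms collapse to $\operatorname{tr}(\Gamma^{q}\Lambda)-\operatorname{tr}(\Lambda\Gamma^{q})$, which is a graded commutator by cyclicity; hence $d\operatorname{tr}(\Gamma^{q})\equiv0$ and the free case is settled with no hypothesis on $\Bbbk$. For a general finitely generated projective $M$ I would reduce to this: the map $\sigma_{x}(m)=\{\varphi^{i}_{x}(m)\}_{i}$ realizes $M$ as a direct summand of $\mathcal{C}(\mathcal{I})$ with retraction $\pi$, one extends $\nabla$ to a connection $\widetilde{\nabla}$ on $\mathcal{C}(\mathcal{I})$ — for instance $\widetilde{\nabla}=(\sigma\otimes_{\mathcal{C}}\Id)\circ\nabla\circ\pi+\nabla^{0}\circ(\Id-\sigma\pi)$, with $\nabla^{0}$ as in \eqref{ec:d2}, which is a connection because $\Id-\sigma\pi$ is $\mathcal{C}$-linear — and then transports the trace of $R(\nabla)^{q}$ back to the free module by means of the functoriality $\Tr_{M}(v\circ w)=\Tr_{N}(w\circ v)$ of \S\ref{fa: fgp}. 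Alternatively one stays with matrices and feeds $P\Lambda(1-P)=P\,dP$ into $\operatorname{tr}((dP)\Gamma^{q})+\operatorname{tr}(P\Gamma^{q}\Lambda)-\operatorname{tr}(P\Lambda\Gamma^{q})$, rewriting the bare $dP$ and the mismatched occurrences of $P$ so that the remainder acquires a sandwiched factor that vanishes by an argument of the type $P\xi P=2P\xi P\Rightarrow P\xi P=0$.

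The step I expect to be the main obstacle is exactly this passage from free to projective modules: since in general $\Gamma$ does not commute with the idempotent $P$, the telescoped difference $\operatorname{tr}(P\Gamma^{q}\Lambda)-\operatorname{tr}(P\Lambda\Gamma^{q})$ no longer cancels by cyclicity alone, so one must combine it with $\operatorname{tr}((dP)\Gamma^{q})$ through the compatibility relation and split the resulting odd-degree remainder along the involution $1-2P$ (whose square is $1$ precisely because $P^{2}=P$). This splitting — equivalently, halving an element that is known only to be twice a graded commutator — is where the hypothesis $\operatorname{char}\Bbbk\neq2$ enters; note that for the Levi-Civita connection, where $\Lambda=dP$ and $\Gamma=(dP)^{2}$ commutes with $P$, the obstruction disappears and the argument becomes transparent.
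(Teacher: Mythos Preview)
Your matrix setup, the Bianchi identity, the graded cyclicity of $\Tr$, and the treatment of the free case are all correct and match the paper. The gap is in the passage from free to projective.

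In your approach (a) the trace functoriality gives
\[
\Ch^{q}(M,\nabla)=\Tr_{M\otimes\Omega^{2\ast}}\bigl(R(\nabla)^{q}\bigr)
=\Tr_{\mathcal{C}(\mathcal{I})\otimes\Omega^{2\ast}}\bigl((\sigma\pi\otimes\Id)\,R(\widetilde{\nabla})^{q}\bigr),
\]
which still carries the idempotent $\sigma\pi$; it is \emph{not} the free-module cocycle $\Tr\bigl(R(\widetilde{\nabla})^{q}\bigr)$, so the free case you just proved does not apply. In matrix language you have traded $\Tr(P\Gamma^{q})$ for $\Tr(P'\,\widetilde{\Gamma}^{\,q})$ with another idempotent $P'$ --- nothing has been reduced. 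Your approach (b) is only a hope: the single relation $P\Lambda(1-P)=P\,dP$ constrains $P\Lambda$ but says nothing about $(1-P)\Lambda$, and without a companion relation the remainder $\Tr((dP)\Gamma^{q})+\Tr(P\Gamma^{q}\Lambda)-\Tr(P\Lambda\Gamma^{q})$ does not simplify.

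The paper's decomposition is the missing link, and it uses exactly the two computations you already have --- but as \emph{complements}, not alternatives. One takes the \emph{direct-sum} connection $\nabla\oplus\nabla^{LC}$ on $\mathcal{C}(\mathcal{I})=M\oplus N$ (not your upper-triangular extension), so that
\[
\omega^{q}(M,\nabla)+\omega^{q}(N,\nabla^{LC})=\omega^{q}\bigl(\mathcal{C}(\mathcal{I}),\nabla\oplus\nabla^{LC}\bigr).
\]
Your free-case argument (Bianchi plus cyclicity) shows $d$ of the right-hand side is a commutator. Your final remark --- that for the Levi--Civita connection $\Gamma=(d\Psi)^{2}$, and that the involution $\Pi=2\Psi-1$ with $\Pi^{2}=1$, $\Pi\,d\Psi=-(d\Psi)\,\Pi$ forces $2\Tr((d\Psi)^{2q+1})\equiv 0$ --- is precisely what handles $d\omega^{q}(N,\nabla^{LC})$, and is where $\mathrm{char}\,\Bbbk\neq 2$ enters. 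Subtracting the two gives $d\omega^{q}(M,\nabla)\in[\Omega^{\ast},\Omega^{\ast}]^{2q+1}$.
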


\begin{proof}
We have to prove that $d\omega ^{q}(M,\nabla )$ is a sum of graded
commutators. Since $M$ is projective and finitely generated, there is a
right $\mathcal{C}$-module $N$ such that $M\oplus N=\mathcal{C}(\mathcal{I}),
$ for a certain finite subset $I$ of $\mathcal{C}_{0}$. Since $N$ is a
direct summand of $\mathcal{C}(\mathcal{I})$ we can choose dual bases $%
\{n_{i}\}_{i\in I}$ and $\{\psi ^{i}\}_{i\in I}$ on $N.$ By Lemma \ref%
{le:conn}(i), $\nabla \oplus \nabla ^{LC}$ is a connection on $M\oplus N,$
where $\nabla ^{LC}$ denotes the Levi-Civita connection on $N.$ Obviously, 
\begin{equation*}
\omega ^{q}(M,\nabla )+\omega ^{q}\left( N,\nabla ^{LC}\right) =\omega
^{q}\left( \mathcal{C}(\mathcal{I}),\nabla \oplus \nabla ^{LC}\right) .
\end{equation*}%
In conclusion, it is enough to show that $d\omega ^{q}(N,\nabla ^{LC})$ and $%
d\omega ^{q}(\mathcal{C}(\mathcal{I}),\nabla \oplus \nabla ^{LC})$ can be
written as sums of commutators.

We consider first the case of the Levi-Civita connection on $N.$ For a
matrix $X=(\omega _{ij})_{i,j\in I},$ with $\omega _{ij}\in {}_{x_{i}}\Omega
_{x_{j}}^{n}$ we define $\Tr(X)=\textstyle\sum\nolimits_{i\in I}\omega _{ii}.
$ If $Y=(\zeta _{ij})_{i,j\in I}$ is another matrix, with $\zeta _{ij}\in
{}_{x_{i}}\Omega _{x_{j}}^{m},$ then%
\begin{equation*}
\Tr(YX)=(-1)^{nm}\Tr(XY)+[\Omega ^{\ast },\Omega ^{\ast }]^{n+m}.
\end{equation*}%
By the equation (\ref{ec:wLC}) and the definition of the trace of a matrix
we get%
\begin{equation*}
d\omega ^{q}(N,\nabla ^{LC})=\textstyle\sum\limits_{i_{0},i_{1},\dots
,i_{2q}\in I}d\psi _{x_{i_{1}}}^{i_{0}}(m_{{i_{1}}})d\psi
_{x_{i_{2}}}^{i_{1}}(m_{{i_{2}}})\cdots d\psi _{x_{i_{2q}}}^{i_{2q-1}}(m_{{%
i_{2q}}})d\psi _{x_{i_{0}}}^{i_{2q}}(m_{{i_{0}}})=\Tr(\left( d\Psi \right)
^{2q+1}),
\end{equation*}%
where $\Psi :=(\psi _{x_{j}}^{i}(n_{j}))_{i,j\in I}.$ On the other hand,
since $\{n_{i}\}_{i\in I}$ and $\{\psi ^{i}\}_{i\in I}$ are dual bases we
have $n_{i}=\textstyle\sum\nolimits_{j\in I}n_{j}\cdot \psi _{xi}^{j}(n_{i}),
$ for all $i\in I.$ Therefore, for any $k\in I$, we get%
\begin{equation*}
\psi _{i}^{k}(n_{i})=\textstyle\sum\nolimits_{j\in I}\psi
_{j}^{k}(n_{j})\circ \psi _{x_{i}}^{j}(n_{i}).
\end{equation*}%
These identities are equivalent to the matrix equation $\Psi =\Psi ^{2}.$ We
can now proceed as in the proof \cite[Theorem 1.19]{Ka}. Namely, let $\Pi
=2\Psi -1$. Hence $\Pi ^{2}=1$ and $\Pi (d\Psi )=-(d\Psi )\Pi .$ Thus,%
\begin{equation*}
(d\Psi )^{2q+1}=\Pi ^{2}(d\Psi )^{2q+1}=-\Pi (d\Psi )^{2q+1}\Pi .
\end{equation*}%
By the foregoing computations we deduce that%
\begin{equation*}
d\omega ^{q}(N,\nabla ^{LC})=\Tr\left( (d\Psi )^{2q+1}\right) =-\Tr\left(
\Pi (d\Psi )^{2q+1}\Pi \right) =-\Tr\left( (d\Psi )^{2q+1}\Pi ^{2}\right)
+[\Omega ^{\ast },\Omega ^{\ast }]^{2q+1}.
\end{equation*}%
Since $2$ is invertible in $\Bbbk $ we conclude that $d\omega ^{q}(N,\nabla
^{LC})$ is a commutator.

It remains to show that $d\omega ^{q}(\mathcal{C}(\mathcal{I}),\nabla \oplus
\nabla ^{LC})$ is a commutator as well. Since $\nabla \oplus \nabla ^{LC}$
is a connection on $\mathcal{C}(\mathcal{I})$ there exists a matrix $\Lambda
=(\lambda _{ij})_{i,j\in I},$ with $\lambda _{ij}\in {}_{x_{i}}\Omega
_{x_{j}},$ such that $\nabla \oplus \nabla ^{LC}=\nabla ^{\Lambda }.$ Let $%
\Gamma :=(\gamma _{ij})_{i,j\in I}$ be the matrix $\Gamma =d\Lambda +\Lambda
^{2}.$ By (\ref{ec:wLambda}) we have%
\begin{equation*}
\omega ^{q}\left( \mathcal{C}(\mathcal{I}),\nabla ^{\Lambda }\right) =%
\textstyle\sum\limits_{i_{1},\ldots ,i_{q}\in I}\gamma _{i_{1}i_{2}}\ldots
\gamma _{i_{q-1}i_{q}}\gamma _{i_{q}i_{1}}=\Tr(\Gamma ^{q}).
\end{equation*}%
On the other hand, by induction on $q,$ one shows that $d\left( \Gamma
^{q}\right) =\Gamma ^{q}\Lambda -\Lambda \Gamma ^{q}$. As the trace map and $%
d^{\ast }$ commute we have%
\begin{equation*}
d\omega ^{2q}(\mathcal{C}(\mathcal{I}),\nabla ^{\Lambda })=\Tr\left(
d(\Gamma ^{q})\right) =\Tr\left( \Gamma ^{q}\Lambda \right) -\Tr(\Lambda
\Gamma ^{q}).
\end{equation*}%
We conclude the proof by remarking that the elements of $\Gamma $ are all of
even degree, so $\Tr\left( \Gamma ^{q}\Lambda \right) -\Tr(\Lambda \Gamma
^{q})$ is a commutator.
\end{proof}

Our next aim is to prove that the cohomology class of $\Ch^{q}(M,\nabla )$
in $H_{DR}^{2q}(\mathcal{C},\Omega ^{\ast })$ does not depend on the
connection $\nabla $. We start by proving some preliminary results. First of
all, we shall associate to a $DG$-category $(\Omega ^{\ast },d^{\ast })$ two
new $DG$-categories $\Omega ^{\ast }[t]$ and $\widetilde{\Omega }^{\ast }.$
Recall that $\Omega ^{0}=\mathcal{C}.$

\begin{fact}[{The $DG$-category $\Omega ^{\ast }[t]$.}]
By definition, the objects of $\Omega ^{\ast }[t]$ are the elements of $%
\mathcal{C}_{0}$, and we set$\ {}_{x}\Omega ^{n}[t]_{y}:={}_{x}\Omega
_{y}^{n}\otimes \Bbbk \lbrack t]$. Therefore, a morphism $\omega $ in$\
{}_{x}\Omega ^{n}[t]_{y}$ can be uniquely written as a polynomial $\omega =%
\textstyle\sum\nolimits_{i=0}^{p}\omega _{i}t^{i}$ with coefficients in $%
{}_{x}\Omega _{y}^{n}$. The composition in $\Omega ^{\ast }[t]$ is given by
the relation%
\begin{equation*}
(\textstyle\sum\nolimits_{i=0}^{p}\omega _{i}t^{i})\circ (\textstyle%
\sum\nolimits_{j=0}^{q}\zeta _{j}t^{j})=\textstyle\sum\nolimits_{k=0}^{p+q}(%
\textstyle\sum\nolimits_{r=0}^{k}\omega _{r}\zeta _{k-r})t^{k},
\end{equation*}%
while the identity morphism of $x$ in $\Omega ^{\ast }[t]$ is $1_{x}\in
\,_{x}\mathcal{C}_{x}$, regarded as a constant polynomial in ${}_{x}\Omega
^{0}[t]_{x}.$ The differential of $\Omega ^{\ast }[t]$ satisfies, for any
polynomial in $_{x}\Omega \lbrack t]_{y}^{n},$ the following relation%
\begin{equation*}
d(\textstyle\sum\nolimits_{i=0}^{p}\omega _{i}t^{i})=\textstyle%
\sum\nolimits_{i=0}^{p}\left( d\omega _{i}\right) t^{i}.
\end{equation*}
\end{fact}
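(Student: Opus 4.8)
The plan is to recognize $\Omega^\ast[t]$ as the base change of the $DG$-category $\Omega^\ast$ along the commutative $\Bbbk$-algebra $\Bbbk[t]$, with $t$ declared central and of form-degree $0$. Concretely, since ${}_x\Omega^n[t]_y={}_x\Omega_y^n\otimes\Bbbk[t]$ and $\{t^i\}_{i\geq 0}$ is a $\Bbbk$-basis of $\Bbbk[t]$, every hom-space is again a $\Bbbk$-space and each $\omega$ has a unique expression $\omega=\sum_i\omega_it^i$ with $\omega_i\in{}_x\Omega_y^n$; the asserted formulas are exactly those induced by the rules $(\alpha\otimes p)\circ(\beta\otimes q)=(\alpha\circ\beta)\otimes(pq)$ and $d(\alpha\otimes p)=(d\alpha)\otimes p$. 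Thus the whole verification reduces to checking the category and differential axioms coefficient-by-coefficient; there is no deep step, only bookkeeping of degrees and signs.

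First I would verify that $\Omega^\ast[t]$ is a $\Bbbk$-linear graded category. Bilinearity of the composition is immediate from the convolution formula, which is $\Bbbk$-bilinear in the coefficients. For associativity I would expand both $(\omega\circ\zeta)\circ\eta$ and $\omega\circ(\zeta\circ\eta)$ and observe that the coefficient of $t^l$ in each equals $\textstyle\sum_{r+s+u=l}\omega_r\zeta_s\eta_u$, using associativity of the composition in $\Omega^\ast$ inside every term. The unit axiom holds coefficient-wise: for $\omega=\sum_i\omega_it^i\in{}_x\Omega^n[t]_y$ one has $1_x\circ\omega=\sum_i(1_x\omega_i)t^i=\omega$ and $\omega\circ 1_y=\sum_i(\omega_i1_y)t^i=\omega$, since $1_x$ is a two-sided identity in $\Omega^\ast$. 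For the grading, ${}_x\Omega^\ast[t]_y=\bigoplus_{n\geq 0}{}_x\Omega^n[t]_y$ by definition, and if $\omega\in{}_x\Omega^n[t]_y$ and $\zeta\in{}_y\Omega^m[t]_z$ then each product $\omega_r\zeta_{k-r}$ lies in ${}_x\Omega_z^{n+m}$, so $\omega\circ\zeta\in{}_x\Omega^{n+m}[t]_z$; hence $\Omega^\ast[t]$ is graded with $t$ in degree $0$.

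It remains to check that $d$, defined by $d(\sum_i\omega_it^i)=\sum_i(d\omega_i)t^i$, is a differential. It is $\Bbbk$-linear and raises form-degree by one, because each $d\omega_i\in{}_x\Omega_y^{n+1}$, and $d\circ d=0$ follows from $d^2=0$ in $\Omega^\ast$ applied to every coefficient. For the graded Leibniz rule I would take $\omega\in{}_x\Omega^n[t]_y$ and $\zeta\in{}_y\Omega^m[t]_z$ and apply \eqref{ec:Leibniz2} of $\Omega^\ast$ to each coefficient (note $|\omega_r|=n$), obtaining
\begin{equation*}
d(\omega\circ\zeta)=\textstyle\sum_k\textstyle\sum_r\big((d\omega_r)\zeta_{k-r}+(-1)^n\omega_r(d\zeta_{k-r})\big)t^k;
\end{equation*}
regrouping the two families of terms as convolutions yields $d(\omega\circ\zeta)=(d\omega)\circ\zeta+(-1)^n\omega\circ(d\zeta)$, which is precisely \eqref{ec:Leibniz2} for $\Omega^\ast[t]$. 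The only point requiring care — the sole ``obstacle'' — is the sign: it must remain $(-1)^n$ and not acquire extra factors coming from $t$. This is guaranteed by the convention that $t$ has form-degree $0$ and is central, so that $d(t^i)=0$ and $t^i$ commutes with every form without a Koszul sign; once this convention is fixed, all signs coincide with those of $\Omega^\ast$ and the verification that $(\Omega^\ast[t],d)$ is a $DG$-category is complete.
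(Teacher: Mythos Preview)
Your verification is correct. Note, however, that in the paper this passage is a definition rather than a statement with a proof: the authors simply introduce $\Omega^\ast[t]$ by specifying its hom-spaces, composition, identities, and differential, and leave the routine check that these data form a $DG$-category to the reader. Your argument supplies exactly that omitted verification, carried out coefficient-by-coefficient, and there is nothing to compare it against.
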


\begin{fact}[The $DG$-category $\widetilde{\Omega }^{\ast }$.]
\label{fa:Omega_tilde} The set of objects in $\widetilde{\Omega }^{\ast }$
is $\mathcal{C}_{0},$ while $_{x}\widetilde{\Omega }_{y}^{n}$ is defined by%
\begin{equation*}
_{x}\widetilde{\Omega }_{y}^{n}={}_{x}\Omega ^{n}[t]_{y}\oplus {}_{x}\Omega
^{n-1}[t]_{y}.
\end{equation*}%
Note that $_{x}\widetilde{\Omega }_{y}^{0}=$\thinspace $_{x}\Omega
^{0}[t]_{y}{}.$ It is convenient to write an element $\omega $ in $_{x}%
\widetilde{\Omega }_{y}^{n}$ as formal sum $\omega =\omega _{0}+\omega
_{1}\varepsilon ,$ where $\omega _{0}\in {}_{x}\Omega ^{n}[t]_{y}$ and $%
\omega _{1}\in {}_{x}\Omega ^{n-1}[t]_{y}.$ In $\widetilde{\Omega }^{\ast }$
the composition of morphisms is defined by the formula%
\begin{equation*}
(\omega _{0}+\omega _{1}\varepsilon )\circ (\zeta _{0}+\zeta _{1}\varepsilon
)=(\omega _{0}\zeta _{0})+(\omega _{0}\zeta _{1}+(-1)^{\left\vert \zeta
_{0}\right\vert }\omega _{1}\zeta _{0}),
\end{equation*}%
and we take the identity of $x$ in ${}_{x}\widetilde{\Omega }_{x}^{\ast }$
to be $1_{x}+0\varepsilon .$ For $\omega _{0}+\omega _{1}\varepsilon \in
{}_{x}\widetilde{\Omega }_{y}^{n}$ we set%
\begin{equation*}
\partial (\omega _{0}+\omega _{1}\varepsilon )=d\omega _{0}+\left( d\omega
_{1}+(-1)^{n}\dot{\omega}_{0}\right) \varepsilon ,
\end{equation*}%
where $\dot{\omega}_{0}$ is the derivative of $\omega _{0}$ with respect to $%
t.$ Hence, for a polynomial $\zeta =\textstyle\sum\nolimits_{i=1}^{p}\zeta
_{i}t^{i}$ with coefficients in $_{x}\Omega _{y}^{n-1}$, we have $\dot{\zeta}%
=\textstyle\sum\nolimits_{i=1}^{p}i\zeta _{i}t^{i-1}$. It is not difficult
to check that $(\widetilde{\Omega }^{\ast },\partial ^{\ast })$ is a $DG$%
-category.

In the following lemma we describe the de Rham complex associated to $(%
\widetilde{\Omega }^{\ast },\partial ^{\ast }).$ To simplify the notation,
we shall write $\left\langle \omega \right\rangle $ for the class of $\omega
\in \Omega ^{n}[t]$ in $\Omega ^{n}[t]_{ab}.$
\end{fact}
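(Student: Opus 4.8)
The plan is to verify directly the three requirements in the definition of a $DG$-category recalled above: that $\widetilde{\Omega}^{\ast}$ is a graded category, and that $\partial^{\ast}$ is a differential, i.e. it raises degree by one, squares to zero, and obeys the graded Leibniz rule. The conceptual device that organises every sign computation is to recognise $\widetilde{\Omega}^{\ast}$ as the graded category obtained from $\Omega^{\ast}[t]$ by formally adjoining an odd generator $\varepsilon$ of degree $1$ subject to $\varepsilon^2=0$ and to the graded-commutation rule $\varepsilon\zeta=(-1)^{|\zeta|}\zeta\varepsilon$ for $\zeta\in\Omega^{\ast}[t]$. Indeed, expanding $(\omega_0+\omega_1\varepsilon)(\zeta_0+\zeta_1\varepsilon)$ under these rules and using $\varepsilon^2=0$ reproduces exactly the composition formula of \S\ref{fa:Omega_tilde}, and setting $\partial=d+\varepsilon\tfrac{d}{dt}$ with $d\varepsilon=0$ reproduces the stated formula for $\partial$, since $\varepsilon\dot{\omega}_0=(-1)^{n}\dot{\omega}_0\varepsilon$ when $|\omega_0|=n$. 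With this presentation the axioms will reduce to the already-available facts that $\Omega^{\ast}[t]$ is a $DG$-category, being obtained from $\Omega^{\ast}$ by extension of scalars along $\Bbbk\hookrightarrow\Bbbk[t]$, and that $\tfrac{d}{dt}$ commutes with $d$.

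First I would check that $\widetilde{\Omega}^{\ast}$ is a graded category. For $\omega\in{}_x\widetilde{\Omega}_y^n$ and $\zeta\in{}_y\widetilde{\Omega}_z^m$, the degree-zero component $\omega_0\zeta_0$ lies in ${}_x\Omega^{n+m}[t]_z$ while the $\varepsilon$-component $\omega_0\zeta_1+(-1)^m\omega_1\zeta_0$ lies in ${}_x\Omega^{n+m-1}[t]_z$, so $\omega\circ\zeta\in{}_x\widetilde{\Omega}_z^{n+m}$ and the grading is respected. That $1_x+0\varepsilon$ is a two-sided identity is immediate from the composition formula. Associativity is the one step requiring care: expanding both $(\omega\circ\zeta)\circ\xi$ and $\omega\circ(\zeta\circ\xi)$ one finds the degree-zero parts agree by associativity in $\Omega^{\ast}[t]$, and the three $\varepsilon$-terms agree once one observes that the two ways of accumulating the sign on the $\omega_1\zeta_0\xi_0$ summand both yield $(-1)^{m+p}$, with $m=|\zeta_0|$ and $p=|\xi_0|$. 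This is exactly the associativity guaranteed automatically by the $\varepsilon$-presentation.

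Next I would verify that $\partial$ is a differential. That $\partial$ sends ${}_x\widetilde{\Omega}_y^n$ into ${}_x\widetilde{\Omega}_y^{n+1}$ is read off degrees, since $d\omega_0\in{}_x\Omega^{n+1}[t]_y$ while $d\omega_1$ and $\dot{\omega}_0$ both lie in ${}_x\Omega^n[t]_y$. For $\partial^2=0$, applying $\partial$ twice produces $d^2\omega_0=0$ in the degree-zero slot, and in the $\varepsilon$-slot the contributions $(-1)^n d\dot{\omega}_0$ and $(-1)^{n+1}d\dot{\omega}_0$ cancel; here one uses $d^2=0$ in $\Omega^{\ast}[t]$ together with the commutativity $d\circ\tfrac{d}{dt}=\tfrac{d}{dt}\circ d$, which holds because $d$ acts on coefficients and $\tfrac{d}{dt}$ on the variable $t$. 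Finally the graded Leibniz rule $\partial(\omega\zeta)=\partial(\omega)\zeta+(-1)^n\omega\,\partial(\zeta)$ is obtained by expanding both sides: the degree-zero parts match by the Leibniz rule for $d$ in $\Omega^{\ast}[t]$, and comparing the six $\varepsilon$-terms on each side reduces, after the ordinary product rule $\tfrac{d}{dt}(\omega_0\zeta_0)=\dot{\omega}_0\zeta_0+\omega_0\dot{\zeta}_0$, to a handful of sign identities such as $(-1)^{m+n-1}=(-1)^{n+m+1}$, all of which hold.

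The only genuine obstacle is the sign bookkeeping, and I expect the graded Leibniz rule for $\partial$ to be the most delicate point, since it simultaneously involves the Leibniz rule of $d$, the product rule of $\tfrac{d}{dt}$, and the graded-commutation sign $(-1)^{|\zeta_0|}$ built into the composition. The $\varepsilon$-presentation makes all these signs forced rather than ad hoc: because $\partial=d+\varepsilon\tfrac{d}{dt}$ is the sum of the degree-one derivation $d$ and the operator $\varepsilon\tfrac{d}{dt}$, which is itself a degree-one graded derivation, being the product of the odd graded-central element $\varepsilon$ with the even derivation $\tfrac{d}{dt}$, the Leibniz rule for $\partial$ is automatic and the explicit check merely confirms the bookkeeping.
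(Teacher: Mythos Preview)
Your verification is correct. The paper itself offers no proof here: it simply asserts that ``it is not difficult to check that $(\widetilde{\Omega}^{\ast},\partial^{\ast})$ is a $DG$-category'' and moves on. Your argument supplies what the paper omits, and the device of presenting $\widetilde{\Omega}^{\ast}$ as $\Omega^{\ast}[t]$ with an adjoined odd, square-zero, graded-central element $\varepsilon$ and writing $\partial=d+\varepsilon\tfrac{d}{dt}$ is exactly the right way to organise the signs: once one knows that left multiplication by a degree-one graded-central element turns the degree-zero derivation $\tfrac{d}{dt}$ into a degree-one graded derivation, both the Leibniz rule and $\partial^{2}=0$ follow without a term-by-term chase. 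The explicit spot-checks you give (associativity, the cancellation $(-1)^{n}d\dot{\omega}_{0}+(-1)^{n+1}\dot{(d\omega_{0})}=0$, and the sample sign identity) are all accurate.
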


\begin{lemma}
\label{le:DeRham} The de Rham complex associated to $\widetilde{\Omega }%
^{\ast }$ is isomorphic to the complex $(\mathrm{C}^{\ast },\delta ^{\ast }) 
$, where $\mathrm{C}^{n}=\Omega ^{n}[t]_{ab}\oplus \Omega ^{n-1}[t]_{ab}$,
and 
\begin{equation*}
\delta ^{n}(\left\langle \omega _{0}\right\rangle +\left\langle \omega
_{1}\right\rangle \varepsilon )=\left\langle d\omega _{0} \right\rangle
+\left\langle d \omega _{1} +(-1)^{n}\dot{\omega}_{1}\right\rangle
\varepsilon .
\end{equation*}
\end{lemma}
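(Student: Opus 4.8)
The plan is to realise the de Rham complex $(\widetilde\Omega^{\ast}_{ab},\partial^{\ast}_{ab})$ as $(\mathrm{C}^{\ast},\delta^{\ast})$ by tracking, level by level, what the splitting ${}_{x}\widetilde\Omega^{n}_{y}={}_{x}\Omega^{n}[t]_{y}\oplus{}_{x}\Omega^{n-1}[t]_{y}\varepsilon$ becomes after restricting to ``loops'' $x=y$ and quotienting by graded commutators. Summing over $x\in\mathcal{C}_{0}$ we get $\bigoplus_{x}{}_{x}\widetilde\Omega^{n}_{x}=\big(\bigoplus_{x}{}_{x}\Omega^{n}[t]_{x}\big)\oplus\big(\bigoplus_{x}{}_{x}\Omega^{n-1}[t]_{x}\big)\varepsilon$, so the whole problem is to describe the subspace $[\widetilde\Omega^{\ast},\widetilde\Omega^{\ast}]^{n}$ inside this direct sum, and then to identify the differential.

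The core step is a single computation. For $\omega=\omega_{0}+\omega_{1}\varepsilon\in{}_{x}\widetilde\Omega^{p}_{y}$ and $\zeta=\zeta_{0}+\zeta_{1}\varepsilon\in{}_{y}\widetilde\Omega^{n-p}_{x}$, I would expand $\omega\zeta$ and $\zeta\omega$ with the composition rule of \S\ref{fa:Omega_tilde} and plug into \eqref{ec:grcomm} to obtain
\[
[\omega,\zeta]=[\omega_{0},\zeta_{0}]+\big([\omega_{0},\zeta_{1}]+(-1)^{n-p}[\omega_{1},\zeta_{0}]\big)\varepsilon ,
\]
where on the right $[-,-]$ denotes the graded commutator in the $DG$-category $\Omega^{\ast}[t]$; the verification uses only the congruences $p(n-p)\equiv p(n-p-1)+p$ and $p(n-p)\equiv(p-1)(n-p)+(n-p)\pmod 2$. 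From this, the inclusion $[\widetilde\Omega^{\ast},\widetilde\Omega^{\ast}]^{n}\subseteq[\Omega^{\ast}[t],\Omega^{\ast}[t]]^{n}\oplus[\Omega^{\ast}[t],\Omega^{\ast}[t]]^{n-1}\varepsilon$ is immediate. For the reverse inclusion one must exhibit each graded commutator of $\Omega^{\ast}[t]$: those of degree $n$ already appear by taking $\omega_{1}=\zeta_{1}=0$, whereas a degree-$(n-1)$ commutator $[\omega_{0},\zeta_{1}]$ is obtained from $\omega=\omega_{0}$ (so $\omega_{1}=0$) and $\zeta=\zeta_{1}\varepsilon$ (so $\zeta_{0}=0$) --- note that one cannot take $\omega_{0}=\zeta_{0}=0$, since $\varepsilon^{2}=0$ kills such commutators. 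Hence $[\widetilde\Omega^{\ast},\widetilde\Omega^{\ast}]^{n}=[\Omega^{\ast}[t],\Omega^{\ast}[t]]^{n}\oplus[\Omega^{\ast}[t],\Omega^{\ast}[t]]^{n-1}\varepsilon$, and passing to quotients yields a $\Bbbk$-linear isomorphism $\Phi^{n}\colon\widetilde\Omega^{n}_{ab}\overset{\cong}{\longrightarrow}\Omega^{n}[t]_{ab}\oplus\Omega^{n-1}[t]_{ab}\varepsilon=\mathrm{C}^{n}$ carrying the class of $\omega_{0}+\omega_{1}\varepsilon$ to $\esc{\omega_{0}}+\esc{\omega_{1}}\varepsilon$.

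Finally I would check that $\Phi^{\ast}$ is a morphism of complexes. Because $(\widetilde\Omega^{\ast},\partial^{\ast})$ is a $DG$-category, the graded Leibniz rule for $\partial^{\ast}$ forces $\partial^{n}$ to send graded commutators of degree $n$ to graded commutators of degree $n+1$ --- the same argument already recalled for an arbitrary $DG$-category --- so $\partial^{n}_{ab}$ is well defined. Then the explicit formula for $\partial$ from \S\ref{fa:Omega_tilde}, combined with the facts that $d$ and differentiation in $t$ both descend to the commutator quotients $\Omega^{\ast}[t]_{ab}$ (each sending $[\omega,\zeta]$ to $[\,\cdot\,,\zeta]+[\omega,\cdot\,]$ by the respective Leibniz rule), identifies $\Phi^{n+1}\circ\partial^{n}_{ab}$ with $\delta^{n}\circ\Phi^{n}$, i.e.\ with the stated formula for $\delta^{n}$.

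The step I expect to be the real obstacle is the middle one: pinning down $[\widetilde\Omega^{\ast},\widetilde\Omega^{\ast}]^{n}$ exactly. The $\subseteq$ direction is a sign computation, but the $\supseteq$ direction is where it is easy to go wrong, precisely because the ``obvious'' choice $\omega_{0}=\zeta_{0}=0$ produces nothing; one genuinely needs the asymmetric pair $\omega=\omega_{0}$, $\zeta=\zeta_{1}\varepsilon$ to reach the $\varepsilon$-commutators. Everything else is routine bookkeeping.
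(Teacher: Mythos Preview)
Your proposal is correct and follows essentially the same approach as the paper: both reduce the lemma to the identity $[\widetilde{\Omega}^{\ast},\widetilde{\Omega}^{\ast}]^{n}=[\Omega^{\ast}[t],\Omega^{\ast}[t]]^{n}\oplus[\Omega^{\ast}[t],\Omega^{\ast}[t]]^{n-1}\varepsilon$, prove $\subseteq$ via the expanded commutator formula $[\omega_{0}+\omega_{1}\varepsilon,\zeta_{0}+\zeta_{1}\varepsilon]=[\omega_{0},\zeta_{0}]+([\omega_{0},\zeta_{1}]+(-1)^{|\zeta_{0}|}[\omega_{1},\zeta_{0}])\varepsilon$, and prove $\supseteq$ by exhibiting $[\omega,\theta]=[\omega+0\varepsilon,\theta+0\varepsilon]$ and $[\zeta,\xi]\varepsilon=[\zeta+0\varepsilon,0+\xi\varepsilon]$. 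Your explicit remark that the symmetric choice $\omega_{0}=\zeta_{0}=0$ fails because $\varepsilon^{2}=0$, and your verification that the differential descends, are useful additions that the paper leaves implicit.
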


\begin{proof}
It is enough to prove the following identity%
\begin{equation*}
\left[ \widetilde{\Omega }^{\ast },\widetilde{\Omega }^{\ast }\right] ^{n}=%
\left[ \Omega ^{\ast }[t],\Omega ^{\ast }[t]\right] ^{n}\textstyle\bigoplus %
\left[ \Omega ^{\ast }[t],\Omega ^{\ast }[t]\right] ^{n-1}\varepsilon .
\end{equation*}%
The inclusion $\subseteq $ is a simple consequence of the following relation%
\begin{equation*}
\lbrack \omega _{0}+\omega _{1}\varepsilon ,\theta _{0}+\theta
_{1}\varepsilon ]=[\omega _{0},\theta _{0}]+\left( [\omega _{0},\theta
_{1}]+(-1)^{\left\vert \theta _{0}\right\vert }[\omega _{1},\theta
_{0}]\right) \varepsilon .
\end{equation*}%
To prove the other inclusion we notice that the following relations hold%
\begin{equation*}
\lbrack \omega ,\theta ]=[\omega +0\varepsilon ,\theta +0\varepsilon ]\quad 
\text{and\quad }[\zeta ,\xi ]\varepsilon =[\zeta +0\varepsilon ,0+\xi
\varepsilon ],
\end{equation*}%
for any commutators $[\omega ,\theta ]\in \Omega ^{n}[t]$ and $[\zeta ,\xi
]\in \Omega ^{n-1}[t].$ Therefore, $[\omega ,\theta ]+[\zeta ,\xi
]\varepsilon $ is a sum of commutators in $\widetilde{\Omega }^{n}$.
\end{proof}

\begin{fact}[The evaluation map.]
\label{fa:ev_map}Let $\Omega ^{\ast }$ be a $DG$-category. For every
polynomial $\omega _{0}=\textstyle\sum\nolimits_{i=0}^{p}\omega _{i}t^{i}$
with coefficients in $_{x}\Omega _{y}^{n}$ and $a\in \Bbbk ,$ let $\omega
_{0}(a):=\textstyle\sum\nolimits_{i=0}^{p}\omega _{i}a^{i}.$ Furthermore,
for a couple of objects $x$ and $y$ in $\mathcal{C}_{0},$ we define the
linear map $_{x}(ev_{a}^{n})_{y}$ from ${}_{x}\Omega ^{n}[t]_{y}\oplus
\Omega ^{n-1}[t]_{ab}$ to ${}_{x}\Omega _{y}^{n},$ by%
\begin{equation*}
\quad _{x}(ev_{a}^{n})_{y}(\omega _{0}+\left\langle \omega _{1}\right\rangle
\varepsilon )=\omega _{0}(a).
\end{equation*}%
Since $_{x}(ev_{a}^{n})_{x}$ maps a commutator $[\omega _{0},\theta _{0}]$
in ${}_{x}\Omega ^{n}[t]_{x}$ to the commutator $[\omega _{0}(a),\theta
_{0}(a)]$ in ${}_{x}\Omega _{x}^{n}$, the family $\{_{x}(ev_{a}^{n})_{x}%
\}_{x\in \mathcal{C}_{0}}$ induces a linear transformation%
\begin{equation*}
ev_{a}^{n}:\Omega ^{n}[t]_{ab}\oplus \Omega ^{n-1}[t]_{ab}\longrightarrow
\Omega _{ab}^{n},\quad ev_{a}^{n}\left( \left\langle \omega
_{0}\right\rangle +\left\langle \omega _{1}\right\rangle \varepsilon \right)
=\left\langle \omega _{0}(a)\right\rangle .
\end{equation*}
\end{fact}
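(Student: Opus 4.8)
The assertion to be verified is that the formula $\langle\omega_0\rangle+\langle\omega_1\rangle\varepsilon\mapsto\langle\omega_0(a)\rangle$ produces a well-defined $\Bbbk$-linear map, i.e.\ that the diagonal family $\{{}_x(ev_a^n)_x\}_{x\in\mathcal{C}_0}$ carries the graded commutators $[\Omega^{\ast}[t],\Omega^{\ast}[t]]^n$ into $[\Omega^{\ast},\Omega^{\ast}]^n$. The plan is to first recognize the coefficientwise substitution $t\mapsto a$ as a $\Bbbk$-linear, grading-preserving functor $ev_a\colon\Omega^{\ast}[t]\to\Omega^{\ast}$ that is the identity on objects, and then to observe that any such functor sends graded commutators to graded commutators of the same degree.

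First I would fix objects $x,y$ and check that ${}_x(ev_a)_y\colon{}_x\Omega^n[t]_y\to{}_x\Omega_y^n$, $\sum_i\omega_i t^i\mapsto\sum_i\omega_i a^i$, is $\Bbbk$-linear and leaves the form-degree $n$ unchanged, the power of $t$ being invisible to the $\Omega$-grading. The only computation with real content is compatibility with composition: for composable homogeneous forms $\omega_0=\sum_i\omega_i t^i$ and $\zeta_0=\sum_j\zeta_j t^j$ the composition in $\Omega^{\ast}[t]$ is the Cauchy product, so $(\omega_0\zeta_0)(a)=\sum_k\bigl(\sum_r\omega_r\zeta_{k-r}\bigr)a^k$; since $a\mapsto a^k$ is multiplicative this factors as $\bigl(\sum_i\omega_i a^i\bigr)\bigl(\sum_j\zeta_j a^j\bigr)=\omega_0(a)\,\zeta_0(a)$. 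This is precisely the statement that $ev_a$ respects composition, establishing that it is the desired functor.

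Granting functoriality and grading preservation, I would then evaluate a generating graded commutator. For $\omega_0\in{}_x\Omega^p[t]_y$ and $\theta_0\in{}_y\Omega^{n-p}[t]_x$ we have $[\omega_0,\theta_0]=\omega_0\theta_0-(-1)^{p(n-p)}\theta_0\omega_0$, and applying $ev_a$ yields $\omega_0(a)\theta_0(a)-(-1)^{p(n-p)}\theta_0(a)\omega_0(a)=[\omega_0(a),\theta_0(a)]$, a graded commutator of degree $n$ in $\bigoplus_x{}_x\Omega_x^n$. Summing over $x$ gives $ev_a\bigl([\Omega^{\ast}[t],\Omega^{\ast}[t]]^n\bigr)\subseteq[\Omega^{\ast},\Omega^{\ast}]^n$, so the substitution descends to a well-defined linear map $\Omega^n[t]_{ab}\to\Omega_{ab}^n$. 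Since ${}_x(ev_a^n)_x$ annihilates the $\varepsilon$-summand $\Omega^{n-1}[t]_{ab}$ outright, the full formula $ev_a^n(\langle\omega_0\rangle+\langle\omega_1\rangle\varepsilon)=\langle\omega_0(a)\rangle$ is then manifestly well defined and $\Bbbk$-linear on $\Omega^n[t]_{ab}\oplus\Omega^{n-1}[t]_{ab}$.

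Since this is entirely a well-definedness verification there is no serious obstacle; the single point warranting attention, which I would isolate as the key step, is that the sign $(-1)^{p(n-p)}$ must be reproduced verbatim after substitution. This holds precisely because $ev_a$ preserves the bidegree $(p,n-p)$ of the pair $(\omega_0,\theta_0)$, so once grading compatibility is recorded every remaining step is immediate.
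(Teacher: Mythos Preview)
Your proposal is correct and follows exactly the approach the paper takes: the paper's justification is the single inline observation that $_{x}(ev_a^n)_{x}$ sends the commutator $[\omega_0,\theta_0]$ to $[\omega_0(a),\theta_0(a)]$, and you have simply unpacked this by verifying that coefficientwise substitution $t\mapsto a$ is a degree-preserving $\Bbbk$-linear functor $\Omega^{\ast}[t]\to\Omega^{\ast}$, from which the commutator-to-commutator property is immediate. There is nothing to add.
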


\begin{lemma}
The family $\{ev_{a}^{n}\}_{n\in 
%TCIMACRO{\U{2115} }%
%BeginExpansion
\mathbb{N}
%EndExpansion
}$ is a morphism of complexes between $(\widetilde{\Omega }_{ab}^{\ast
},\partial_{ab}^{\ast })$ and $(\Omega _{ab}^{\ast },d_{ab}^{\ast }).$

\begin{proof}
We identify $(\widetilde{\Omega }_{ab}^{\ast },\partial _{ab}^{\ast })$ with
the cochain complex from Lemma \ref{le:DeRham}. Therefore, if $\left\langle
\omega _{0}\right\rangle +\left\langle \omega _{1}\right\rangle \varepsilon $
is a cochain of degree $n,$ then%
\begin{equation*}
\left( ev_{a}^{n+1}\circ \partial ^{n}\right) (\left\langle \omega
_{0}\right\rangle +\left\langle \omega _{1}\right\rangle \varepsilon
)=ev_{a}^{n+1}\left( \left\langle d\omega _{0}\right\rangle +\left\langle
d\omega _{1}+(-1)^{n}\dot{\omega}_{0}\right\rangle \varepsilon \right)
=(d\omega _{0})(a).
\end{equation*}%
We conclude the proof by remarking that $\left( d\omega _{0}\right)
(a)=d(\omega _{0}(a))=\left( d^{n}\circ ev_{a}^{n}\right) (\left\langle
\omega _{0}\right\rangle +\left\langle \omega _{1}\right\rangle \varepsilon
) $.
\end{proof}
\end{lemma}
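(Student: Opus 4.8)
The plan is to reduce the chain-map identity $ev_a^{n+1}\circ\partial_{ab}^n = d_{ab}^n\circ ev_a^n$ to the elementary fact that the differential of $\Omega^\ast[t]$ commutes with evaluation of polynomials at a scalar. First I would use Lemma \ref{le:DeRham} to replace the source-side complex $(\widetilde{\Omega}_{ab}^\ast,\partial_{ab}^\ast)$ by the explicit complex $(\mathrm{C}^\ast,\delta^\ast)$ with $\mathrm{C}^n = \Omega^n[t]_{ab}\oplus\Omega^{n-1}[t]_{ab}$ and $\delta^n$ as displayed there; under this identification $ev_a^n$ is exactly the map of \S\ref{fa:ev_map} sending $\langle\omega_0\rangle+\langle\omega_1\rangle\varepsilon$ to $\langle\omega_0(a)\rangle$, already known to be well defined on the quotient since it carries commutators to commutators.

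Next I would evaluate both composites on an arbitrary cochain $\langle\omega_0\rangle+\langle\omega_1\rangle\varepsilon$ of degree $n$. Applying $\delta^n$ and then $ev_a^{n+1}$ discards the $\varepsilon$-component and produces $\langle(d\omega_0)(a)\rangle$; applying $ev_a^n$ and then $d_{ab}^n$ produces $\langle d(\omega_0(a))\rangle$. Hence the whole statement reduces to the identity $(d\omega_0)(a)=d(\omega_0(a))$, which I would check already at the level of ${}_x\Omega_y^{n+1}$, before passing to $\Omega_{ab}^{n+1}$: writing $\omega_0=\sum_i\omega_i t^i$ with $\omega_i\in{}_x\Omega_y^n$, the definition of the differential on $\Omega^\ast[t]$ gives $d\omega_0=\sum_i(d\omega_i)t^i$, whence $(d\omega_0)(a)=\sum_i(d\omega_i)a^i=d(\sum_i\omega_i a^i)=d(\omega_0(a))$ using only $\Bbbk$-linearity of $d^n$. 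I would also record the degree-$0$ case separately, where ${}_x\widetilde{\Omega}_y^0={}_x\Omega^0[t]_y$ has no $\varepsilon$-part and the identity reads verbatim.

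I do not expect any real obstacle: the substance has already been isolated in Lemma \ref{le:DeRham}, and once $\partial$ is put in the explicit form given there the evaluation map simply forgets the piece of the differential that involves the $t$-derivative, so compatibility with $d$ is automatic. The only points demanding a little care are the book-keeping of the $\varepsilon$-summand and the sign $(-1)^n$ occurring in $\delta^n$ --- but these play no role in the end because $ev_a$ kills the $\varepsilon$-component --- and making sure the argument does not secretly use anything about $\Omega^\ast$ beyond $\Bbbk$-linearity of its differential.
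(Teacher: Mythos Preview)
Your proposal is correct and follows essentially the same approach as the paper: identify $(\widetilde{\Omega}_{ab}^{\ast},\partial_{ab}^{\ast})$ with the explicit complex of Lemma~\ref{le:DeRham}, apply both composites to a generic cochain $\langle\omega_0\rangle+\langle\omega_1\rangle\varepsilon$, and reduce to the identity $(d\omega_0)(a)=d(\omega_0(a))$. Your write-up is slightly more explicit in justifying this last identity via $\Bbbk$-linearity of $d$, but there is no substantive difference in strategy.
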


In order to prove that $\{ev_{0}^{n}\}_{n\in 
%TCIMACRO{\U{2115} }%
%BeginExpansion
\mathbb{N}
%EndExpansion
}$ and $\{ev_{1}^{n}\}_{n\in 
%TCIMACRO{\U{2115} }%
%BeginExpansion
\mathbb{N}
%EndExpansion
}$ induce the same maps in cohomology, we are going to construct a homotopy
map between them.

\begin{fact}[The homotopy operator.]
We keep the notation and the assumptions from \S \ref{fa:ev_map}. In
addition, we assume that the characteristic of $\Bbbk $ is zero. For any $%
\omega :=\textstyle\sum\nolimits_{i=0}^{p}\omega _{i}t^{i}$ with
coefficients in ${}_{x}\Omega _{y}^{n-1}$ we define%
\begin{equation*}
\textstyle\int\nolimits_{0}^{1}\omega dt:=\textstyle\sum%
\nolimits_{i=0}^{p}(i+1)^{-1}\omega _{i}.
\end{equation*}%
It is easy to see that the map $k^{n}:\Omega ^{n}[t]_{ab}\oplus {}\Omega
^{n-1}[t]_{ab}\varepsilon \longrightarrow {}\Omega _{ab}^{n-1}$ given by%
\begin{equation*}
\quad _{x}k_{y}^{n}(\left\langle \omega \right\rangle _{0}+\left\langle
\omega _{1}\right\rangle \varepsilon )=(-1)^{n}\langle \textstyle%
\int\nolimits_{0}^{1}\omega _{1}dt\rangle
\end{equation*}%
is well defined, as $\textstyle\int\nolimits_{0}^{1}[\zeta ,\xi ]dt$ is a
sum of commutators in $\Omega ^{\ast },$ for any $\zeta \in \,_{x}\Omega
^{p}[t]_{y} $ and $\xi \in \,_{y}\Omega ^{n-p-1}[t]_{x}.$
\end{fact}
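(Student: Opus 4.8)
The claim to establish is the well-definedness of the homotopy operator $k^{n}$ just defined; as noted there, this comes down to showing that
$\int_{0}^{1}[\zeta ,\xi ]\,dt$ lies in $[\Omega ^{\ast },\Omega ^{\ast }]^{n-1}$ for every $\zeta \in {}_{x}\Omega ^{p}[t]_{y}$ and $\xi \in {}_{y}\Omega ^{n-p-1}[t]_{x}$. The plan is to reduce this to the $\Bbbk $-linearity of the integration operator together with a term-by-term comparison of the graded commutator in $\Omega ^{\ast }[t]$ with graded commutators in $\Omega ^{\ast }$.

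First I would record that $\sum_{i}\omega _{i}t^{i}\mapsto \sum_{i}(i+1)^{-1}\omega _{i}$ is a $\Bbbk $-linear map from $\bigoplus_{x\in \mathcal{C}_{0}}{}_{x}\Omega ^{n-1}[t]_{x}$ to $\bigoplus_{x\in \mathcal{C}_{0}}{}_{x}\Omega ^{n-1}_{x}$; hence, to see that it carries $[\Omega ^{\ast }[t],\Omega ^{\ast }[t]]^{n-1}$ into $[\Omega ^{\ast },\Omega ^{\ast }]^{n-1}$, it suffices to evaluate it on a single graded commutator $[\zeta ,\xi ]$ of forms homogeneous of $\Omega$-degrees $p$ and $n-1-p$. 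Writing $\zeta =\sum_{r}\zeta _{r}t^{r}$ and $\xi =\sum_{s}\xi _{s}t^{s}$ with $\zeta _{r}\in {}_{x}\Omega ^{p}_{y}$ and $\xi _{s}\in {}_{y}\Omega ^{n-1-p}_{x}$, and expanding by means of the definition of the composition in $\Omega ^{\ast }[t]$ and of the graded commutator \eqref{ec:grcomm}, one obtains
\begin{equation*}
[\zeta ,\xi ]=\sum_{k\ge 0}\Bigl(\,\sum_{r+s=k}[\zeta _{r},\xi _{s}]\Bigr)t^{k},
\end{equation*}
the inner bracket now being the graded commutator in $\Omega ^{\ast }$ of $\zeta _{r}$ and $\xi _{s}$. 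Applying the integration operator then yields
\begin{equation*}
\textstyle\int_{0}^{1}[\zeta ,\xi ]\,dt=\sum_{r,s\ge 0}\frac{1}{r+s+1}\,[\zeta _{r},\xi _{s}],
\end{equation*}
a $\Bbbk $-linear combination of graded commutators in $\Omega ^{\ast }$, hence an element of $[\Omega ^{\ast },\Omega ^{\ast }]^{n-1}$; this is the only point where the hypothesis $\operatorname{char}\Bbbk =0$ is used, namely to make the scalars $(r+s+1)^{-1}$ meaningful.

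To finish I would note that, since $\Omega ^{n-1}[t]_{ab}$ is by definition the quotient of $\bigoplus_{x}{}_{x}\Omega ^{n-1}[t]_{x}$ modulo $[\Omega ^{\ast }[t],\Omega ^{\ast }[t]]^{n-1}$, the computation above shows that the integration operator descends to a well-defined $\Bbbk $-linear map $\Omega ^{n-1}[t]_{ab}\to \Omega ^{n-1}_{ab}$, $\langle \omega _{1}\rangle \mapsto \langle \int_{0}^{1}\omega _{1}\,dt\rangle $; composing it with the (evidently well-defined) projection $\langle \omega _{0}\rangle +\langle \omega _{1}\rangle \varepsilon \mapsto \langle \omega _{1}\rangle $ and with the scalar $(-1)^{n}$ produces exactly $k^{n}$, which is therefore well defined.

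There is no real obstacle here — the argument is bookkeeping — but two points deserve attention. The first is the sign: one must check that the Koszul sign $(-1)^{p(n-1-p)}$ entering $[\zeta ,\xi ]$ inside $\Omega ^{\ast }[t]$ agrees with the one entering $[\zeta _{r},\xi _{s}]$ inside $\Omega ^{\ast }$, which it does because adjoining the polynomial variable $t$ leaves the $\mathbb{N}$-grading unchanged (that is, $t$ has degree $0$, so $\Omega ^{n}[t]$ sits in degree $n$). The second is conceptual rather than computational: the formula for $k^{n}$ a priori depends on the chosen polynomial representative $\omega _{1}$ of its class in $\Omega ^{n-1}[t]_{ab}$, and it is precisely the displayed membership $\int_{0}^{1}[\zeta ,\xi ]\,dt\in [\Omega ^{\ast },\Omega ^{\ast }]^{n-1}$ that removes this dependence.
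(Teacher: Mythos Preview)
Your argument is correct and is exactly the computation the paper has in mind: the text itself offers no proof beyond the parenthetical remark that $\int_{0}^{1}[\zeta,\xi]\,dt$ is a sum of commutators in $\Omega^{\ast}$, and you have simply expanded that remark by writing $\zeta$ and $\xi$ in coefficients and observing that the graded-commutator sign is unchanged because $t$ has degree $0$. There is nothing to add.
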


\begin{lemma}
The operators $(-1)^{\ast }k^{\ast }$ define a homotopy between $%
ev_{1}^{\ast }$ and $ev_{0}^{\ast }$.
\end{lemma}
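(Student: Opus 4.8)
The plan is to verify, by a direct computation on the explicit model $(\mathrm{C}^{\ast},\delta^{\ast})$ of $(\widetilde{\Omega}_{ab}^{\ast},\partial_{ab}^{\ast})$ supplied by Lemma~\ref{le:DeRham}, that the operators $h^{n}:=(-1)^{n}k^{n}$ satisfy the chain-homotopy identity
$$ev_{1}^{n}-ev_{0}^{n}=d_{ab}^{\,n-1}\circ h^{n}+h^{n+1}\circ\partial_{ab}^{\,n}.$$
By $\Bbbk$-linearity it suffices to test this on an arbitrary generator $\langle\omega_{0}\rangle+\langle\omega_{1}\rangle\varepsilon$ of $\mathrm{C}^{n}$, with $\omega_{0}\in\Omega^{n}[t]$ and $\omega_{1}\in\Omega^{n-1}[t]$.

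First I would record two elementary properties of the fibrewise integral $\omega\mapsto\int_{0}^{1}\omega\,dt$, this being the place where the standing hypothesis that the characteristic of $\Bbbk$ is zero enters (it is what makes the scalars $(i+1)^{-1}$ meaningful). Writing $\omega=\textstyle\sum_{i}\omega_{i}t^{i}$: (a) it commutes with $d$, since $d$ acts coefficientwise; and (b) it obeys the fundamental theorem of calculus at the level of polynomials, $\int_{0}^{1}\dot{\omega}\,dt=\omega(1)-\omega(0)$, because $\int_{0}^{1}i\,t^{i-1}\,dt=1$.

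Then I would substitute $\partial^{n}(\langle\omega_{0}\rangle+\langle\omega_{1}\rangle\varepsilon)=\langle d\omega_{0}\rangle+\langle d\omega_{1}+(-1)^{n}\dot{\omega}_{0}\rangle\varepsilon$ into $h^{n+1}$, compute $d_{ab}^{\,n-1}h^{n}$ on the same class, and add the two. Each of the resulting terms contains a summand of the form $\langle\int_{0}^{1}d\omega_{1}\,dt\rangle$; by property~(a) these are the same element, but — because of the $(-1)^{n}$ occurring in the $\varepsilon$-component of $\partial^{n}$ together with the signs $(-1)^{n}$ and $(-1)^{n+1}$ carried by $k^{n}$ and $k^{n+1}$ — they enter with opposite signs and cancel. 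What survives, after applying property~(b), is exactly $\langle\omega_{0}(1)-\omega_{0}(0)\rangle=(ev_{1}^{n}-ev_{0}^{n})(\langle\omega_{0}\rangle+\langle\omega_{1}\rangle\varepsilon)$, the overall prefactor $(-1)^{\ast}$ in the statement being precisely what is needed to fix the global sign. This establishes the homotopy identity, and hence the fact that $ev_{0}^{\ast}$ and $ev_{1}^{\ast}$ induce one and the same homomorphism on cohomology, which is the use to which the lemma will be put.

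The only point requiring care is the sign bookkeeping: several conventions interact (the $(-1)^{n}$ in $\partial^{n}$, the $(-1)^{n}$ in the definition of $k^{n}$, the prefactor $(-1)^{\ast}$ of the homotopy, and the Koszul signs implicit in the isomorphism of Lemma~\ref{le:DeRham}), and it is their combined effect that makes the unwanted $\langle\int_{0}^{1}d\omega_{1}\,dt\rangle$-terms disappear. Conceptually there is nothing further: the argument is the polynomial--de Rham shadow of the classical chain homotopy between the two endpoint restrictions $i_{0}^{\ast},i_{1}^{\ast}$ on $X\times[0,1]$, with $k^{\ast}$ playing the role of integration over the interval fibre.
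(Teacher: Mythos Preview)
Your proposal is correct and follows essentially the same route as the paper: a direct computation on a generic cochain $\langle\omega_{0}\rangle+\langle\omega_{1}\rangle\varepsilon$ using (a) that $d$ commutes with $\int_{0}^{1}(-)\,dt$ and (b) the polynomial fundamental theorem of calculus $\int_{0}^{1}\dot{\omega}_{0}\,dt=\omega_{0}(1)-\omega_{0}(0)$, so that the two $\langle\int_{0}^{1}d\omega_{1}\,dt\rangle$ contributions cancel and only $ev_{1}^{n}-ev_{0}^{n}$ survives. The paper's proof is exactly this computation written out in three lines.
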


\begin{proof}
Let $\varpi :=\langle \omega _{0}\rangle +\langle \omega _{1}\rangle
\varepsilon $ be a cochain of degree $n$ in $(\widetilde{\Omega }_{ab}^{\ast
},\partial _{ab}^{\ast }).$ Thus%
\begin{align*}
\left( k^{n+1}\circ \partial ^{n}\right) (\varpi )& =(-1)^{n+1}\langle %
\textstyle\int\nolimits_{0}^{1}(d^{n-1}\omega _{1})dt\rangle +\langle %
\textstyle\int\nolimits_{0}^{1}\dot{\omega}_{0}dt\rangle \\
& =(-1)^{n+1}\langle d^{n-1}(\textstyle\int\nolimits_{0}^{1}\omega
_{1}dt)\rangle +\langle \omega _{0}(1)\rangle -\langle \omega _{0}(0)\rangle
\\
& =-\left( d^{n-1}\circ k^{n}\right) (\varpi )+\left(
ev_{1}^{n}-ev_{0}^{n}\right) (\varpi ).
\end{align*}%
This computation shows us that $(-1)^{\ast }k^{\ast }$ is a homotopy between 
$ev_{1}^{\ast }$ and $ev_{0}^{\ast }$.
\end{proof}

\begin{corollary}
\label{co:cobord} If $\varpi $ is a cocycle of degree $n$ in $(\widetilde{%
\Omega }_{ab}^{\ast },\partial _{ab}^{\ast })$ then $ev_{1}^{n}(\varpi
)-ev_{0}^{n}(\varpi )$ is a coboundary.
\end{corollary}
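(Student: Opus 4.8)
The plan is to deduce the statement directly from the homotopy constructed in the preceding lemma, via the standard fact that chain-homotopic morphisms of complexes induce the same map on cohomology. Concretely, I would recall the identity established (after a trivial rearrangement of terms) in that lemma's proof: for every cochain $\varpi$ of degree $n$ in $(\widetilde{\Omega}^{\ast}_{ab}, \partial^{\ast}_{ab})$ one has
\begin{equation*}
(ev_1^n - ev_0^n)(\varpi) = \big(d_{ab}^{n-1}\circ k^n\big)(\varpi) + \big(k^{n+1}\circ \partial_{ab}^n\big)(\varpi),
\end{equation*}
which expresses that the operators $k^{\ast}$ (up to the sign convention used in the definition of homotopy) form a chain homotopy between the morphisms of complexes $ev_1^{\ast}$ and $ev_0^{\ast}$.

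Next I would specialize this identity to a \emph{cocycle} $\varpi$ of degree $n$, that is, one satisfying $\partial_{ab}^n(\varpi)=0$. The second summand then vanishes, so
\begin{equation*}
ev_1^n(\varpi) - ev_0^n(\varpi) = d_{ab}^{n-1}\big(k^n(\varpi)\big).
\end{equation*}
Since $k^n(\varpi)\in \Omega^{n-1}_{ab}$, the right-hand side is, by the very definition of the de Rham complex $(\Omega^{\ast}_{ab}, d_{ab}^{\ast})$, a coboundary of degree $n$, which is exactly the assertion of the corollary.

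I do not expect any genuine obstacle here: all the substantive work --- defining the operators $k^{\ast}$, checking that they descend to the quotients by graded commutators, and deriving the homotopy identity above --- has already been carried out in the lemmas that precede the statement. The only point meriting a moment's attention is the sign bookkeeping relating $k^{\ast}$ to the operators $(-1)^{\ast}k^{\ast}$ named in the previous lemma; but since the argument uses only the displayed identity applied to a cocycle, those signs play no role in the conclusion.
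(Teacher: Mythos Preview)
Your proposal is correct and is exactly the standard argument the paper leaves implicit: the corollary is stated without proof, immediately after the lemma establishing the homotopy identity, and your derivation---rearranging that identity and specializing to a cocycle so that the $k^{n+1}\circ\partial_{ab}^n$ term vanishes---is precisely the intended one-line deduction.
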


Now we can prove that the cohomology class of $\Ch^{2q}(M,\nabla)$ does not
depend on $\nabla.$ Let $B^{n}(\mathcal{C},\Omega^{\ast })$ denote the space
of $n$-coboundaries in the de Rham complex. The cohomology class of an $n$%
-cocycle $\omega$ will be denoted by $\omega+B^{n}(\mathcal{C},\Omega^{\ast
})$.

\begin{theorem}
Let $\mathcal{C}$ be a $\Bbbk $-linear category. We assume that $\Bbbk $ is
a field of characteristic zero and that $\Omega ^{\ast }$ is a $DG$-category
such that $\Omega ^{0}=\mathcal{C}.$ If $M$ is a finitely generated
projective right $\mathcal{C}$-module and $\nabla _{1},\nabla _{2}:M\to
M\otimes _{\Omega ^{0}}\Omega ^{1}$ are connections on $M$, then%
\begin{equation*}
\Ch^{q}(M,\nabla _{1})+B^{2q}(\mathcal{C},\Omega ^{\ast })=\Ch^{q}(M,\nabla
_{2})+B^{2q}(\mathcal{C},\Omega ^{\ast }).
\end{equation*}
\end{theorem}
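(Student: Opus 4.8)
The plan is to interpolate between $\nabla _{1}$ and $\nabla _{2}$ over the auxiliary $DG$-category $\widetilde{\Omega }^{\ast }$, to apply the first theorem of this section there in order to produce a $\partial _{ab}$-cocycle, and then to specialise the parameter at $t=0$ and $t=1$ using Corollary \ref{co:cobord}.

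First I would fix dual bases $\{m_{i}\}_{i\in I}$ and $\{\varphi ^{i}\}_{i\in I}$ on $M$, and let $\Lambda ^{(p)}=(\lambda _{ij}^{(p)})_{i,j\in I}$ be the matrix of $\nabla _{p}$ with respect to $\{m_{i}\}_{i\in I}$ in the sense of \S \ref{fact:conn}, for $p\in \{1,2\}$. Then I would pass to the base change $\widetilde{M}:=M\otimes _{\mathcal{C}}\mathcal{C}[t]$, where $\mathcal{C}[t]=\Omega ^{0}[t]=\widetilde{\Omega }^{0}$; as the base-change functor $-\otimes _{\mathcal{C}}\mathcal{C}[t]$ is additive and $M$ is a direct summand of some $\mathcal{C}(\mathcal{I})$, the module $\widetilde{M}$ is a direct summand of $\mathcal{C}(\mathcal{I})\otimes _{\mathcal{C}}\mathcal{C}[t]$ and hence finitely generated projective over $\widetilde{\Omega }^{0}$, with dual bases $\{\widetilde{m}_{i}\}_{i\in I}$ and $\{\widetilde{\varphi }^{i}\}_{i\in I}$ obtained by base change, $\widetilde{m}_{i}:=m_{i}\otimes _{\mathcal{C}}1$. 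On $\widetilde{M}$ I would take the connection $\widetilde{\nabla }$ with respect to $\widetilde{\Omega }^{\ast }$ whose matrix in the sense of \eqref{ec:lambda}, relative to the generators $\{\widetilde{m}_{i}\}_{i\in I}$, is $\widetilde{\Lambda }:=(1-t)\Lambda ^{(1)}+t\Lambda ^{(2)}$, with entries in ${}_{x_{i}}\Omega _{x_{j}}^{1}\otimes \Bbbk [t]\subseteq {}_{x_{i}}\widetilde{\Omega }_{x_{j}}^{1}$. Concretely $\widetilde{\nabla }$ is the affine combination, with coefficients $1-t$ and $t$, of the base changes of $\nabla _{1}$ and $\nabla _{2}$ (a connection with respect to $\Omega ^{\ast }[t]$, since the inhomogeneous term $m\otimes df$ of \eqref{ec:conn} appears there with total coefficient $1$), corrected by the operator $m\mapsto \dot{m}\,\varepsilon $, which accounts for the $\varepsilon $-component of $\partial $ on $\widetilde{\Omega }^{0}=\mathcal{C}[t]$. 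The one genuinely new verification is that this $\widetilde{\nabla }$ is well defined and satisfies the Leibniz rule \eqref{ec:conn} for the differential $\partial $ of $\widetilde{\Omega }^{\ast }$; this is a direct computation using that $\partial f=df+\dot{f}\varepsilon $ for $f\in \mathcal{C}[t]$ together with \eqref{ec:coord}, and I expect it to be the only place where the formalism of $\widetilde{\Omega }^{\ast }$ must be handled with care.

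Since $\Bbbk $ has characteristic zero, in particular different from $2$, the first theorem of this section applied to the $DG$-category $\widetilde{\Omega }^{\ast }$ and to $\widetilde{\nabla }$ on $\widetilde{M}$ shows that $\Ch^{q}(\widetilde{M},\widetilde{\nabla })$ is a $2q$-cocycle in $(\widetilde{\Omega }_{ab}^{\ast },\partial _{ab}^{\ast })$. I would then evaluate it at $t=0$ and $t=1$ via the chain maps $ev_{0}^{2q}$ and $ev_{1}^{2q}$ of \S \ref{fa:ev_map}. Writing $\widetilde{\Gamma }=\partial \widetilde{\Lambda }+\widetilde{\Lambda }^{2}$, each entry $\widetilde{\gamma }_{ij}$ splits into an $\varepsilon $-free part $(1-t)d\lambda _{ij}^{(1)}+t\,d\lambda _{ij}^{(2)}+\sum _{k}\widetilde{\lambda }_{ik}\widetilde{\lambda }_{kj}$ and an $\varepsilon $-part coming solely from $\dot{\widetilde{\Lambda }}$. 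Since $ev_{a}^{2q}$ projects onto the $\varepsilon $-free summand of $\widetilde{\Omega }_{ab}^{2q}\cong \Omega ^{2q}[t]_{ab}\oplus \Omega ^{2q-1}[t]_{ab}\varepsilon $ (Lemma \ref{le:DeRham}) and then substitutes $t=a$, and since the $\varepsilon $-free part of a product is the product of the $\varepsilon $-free parts, formula \eqref{ec:wq} applied to $\widetilde{M}$ yields
\begin{equation*}
ev_{0}^{2q}\bigl(\Ch^{q}(\widetilde{M},\widetilde{\nabla })\bigr)=\Ch^{q}(M,\nabla _{1}),\qquad ev_{1}^{2q}\bigl(\Ch^{q}(\widetilde{M},\widetilde{\nabla })\bigr)=\Ch^{q}(M,\nabla _{2}).
\end{equation*}
Indeed, at $t=a$ one has $\widetilde{\lambda }_{ij}=\lambda _{ij}^{(1)}$ when $a=0$ and $\widetilde{\lambda }_{ij}=\lambda _{ij}^{(2)}$ when $a=1$, so the $\varepsilon $-free part of $\widetilde{\gamma }_{ij}$ evaluated at $t=a$ is exactly the curvature entry $\gamma _{ij}$ attached to $\nabla _{1}$, respectively $\nabla _{2}$, by \eqref{ec:Gamma}, while $\widetilde{\varphi }_{x_{i_{1}}}^{i_{0}}(\widetilde{m}_{i_{1}})$ specialises to $\varphi _{x_{i_{1}}}^{i_{0}}(m_{i_{1}})$.

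Finally, since $\Ch^{q}(\widetilde{M},\widetilde{\nabla })$ is a cocycle of degree $2q$ in $(\widetilde{\Omega }_{ab}^{\ast },\partial _{ab}^{\ast })$, Corollary \ref{co:cobord} shows that $ev_{1}^{2q}(\Ch^{q}(\widetilde{M},\widetilde{\nabla }))-ev_{0}^{2q}(\Ch^{q}(\widetilde{M},\widetilde{\nabla }))$ is a coboundary in $(\Omega _{ab}^{\ast },d_{ab}^{\ast })$; by the preceding step this difference equals $\Ch^{q}(M,\nabla _{2})-\Ch^{q}(M,\nabla _{1})$, so the two classes agree in $H_{DR}^{2q}(\mathcal{C},\Omega ^{\ast })$. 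The main obstacle is the bookkeeping around $\widetilde{\nabla }$ and the power $\widetilde{\Gamma }^{q}$: one must check that every contribution of the $t$-derivative lands in the $\varepsilon $-component and is therefore killed by $ev_{0}^{2q}$ and $ev_{1}^{2q}$; granting that, the argument is a formal assembly of the preliminary results of this section.
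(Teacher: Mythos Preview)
Your argument is correct, but it differs from the paper's in one structural respect. The paper first passes to a free module: choosing $N$ with $M\oplus N=\mathcal{C}(\mathcal{I})$ and endowing $N$ with its Levi--Civita connection, it reduces to showing that $\Ch^{q}(\mathcal{C}(\mathcal{I}),\nabla)$ is a coboundary for \emph{any} connection $\nabla$ on a free module; it then interpolates on $\mathcal{C}[t](\mathcal{I})$ between $\nabla^{\Lambda}$ and the trivial connection via the matrix $\widetilde{\Lambda}=\Lambda t$. You instead stay on $M$ itself, base change to $\widetilde{M}=M\otimes_{\mathcal{C}}\mathcal{C}[t]$, and interpolate directly between $\nabla_{1}$ and $\nabla_{2}$ with $\widetilde{\Lambda}=(1-t)\Lambda^{(1)}+t\Lambda^{(2)}$. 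The paper's detour buys simplicity: on a free module a matrix automatically determines a connection, so no well-definedness check is needed, and as a by-product one obtains the slightly sharper fact that the Chern cocycle of a free module is always exact. Your route is more direct and conceptually transparent, but it does require the extra verification you flag, namely that the affine combination of the base-changed connections together with the correction $m\mapsto \dot{m}\otimes 1\varepsilon$ really satisfies the Leibniz rule for $\partial$; this is the honest price for not reducing to the free case, and the computation goes through exactly as you indicate. Both proofs then finish identically via Corollary~\ref{co:cobord}.
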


\begin{proof}
Let $N$ be a right $\mathcal{C}$-module such that $M\oplus N=\mathcal{C}(%
\mathcal{I})$. The Levi-Civita connection $\nabla ^{LC}$ on $N$ exists, as $%
N $ is projective and finitely generated, being a direct summand of $%
\mathcal{C}(\mathcal{I})$. For $i\in \{1,2\}$ we have 
\begin{equation*}
\Ch^{q}(M\oplus N,\nabla _{i}\oplus \nabla ^{LC})=\Ch^{q}(M,\nabla _{i})+\Ch%
^{q}(N,\nabla ^{LC}).
\end{equation*}%
Therefore, by subtracting these two equations, we get%
\begin{equation*}
\Ch^{q}(M,\nabla _{1})=\Ch^{q}(M,\nabla _{2})+\Ch^{q}(M\oplus N,\nabla
_{1}\oplus \nabla ^{LC})-\Ch^{q}(M\oplus N,\nabla _{2}\oplus \nabla ^{LC}).
\end{equation*}%
In conclusion, it is enough to prove that $\Ch^{q}(\mathcal{C}(\mathcal{I}%
),\nabla )$ is a coboundary for any connection $\nabla $ on $\mathcal{C}(%
\mathcal{I}).$ We may assume that $\nabla =\nabla ^{\Lambda },$ where $%
\Lambda =(\lambda _{ij})_{i,j\in I}$ is a matrix with $\lambda _{ij}\in
{}_{x_{i}}\Omega _{x_{j}}^{1}$.

Let $\widetilde{\Omega }^{\ast }$ be the $DG$-category that we constructed
in the subsection \ref{fa:Omega_tilde}. Recall that, by construction, $%
\widetilde{\Omega }^{0}$ is the $\Bbbk $-linear category $\mathcal{C}[t]$.
It has the same objects as $\mathcal{C}$, and its morphism from $y$ to $x$
are polynomials with coefficients in $_{x}\mathcal{C}_{y}.$ On the right $%
\mathcal{C}[t]$-module $\widetilde{M}:=\mathcal{C}[t](\mathcal{I})$, we
consider the connection $\widetilde{\nabla }:\widetilde{M}\to \widetilde{M}%
\otimes _{\mathcal{C}[t]}\widetilde{\Omega }^{1}$ associated to the matrix $%
\widetilde{\Lambda }:=(\lambda _{ij}t+0\varepsilon )_{i,j\in I}=\Lambda
t+0\varepsilon .$ Note that $\lambda _{ij}t+0\varepsilon \in {}_{x_{i}}%
\widetilde{\Omega }_{x_{j}}^{1}.$

We have already proved that $\varpi :=\Ch^{q}(\widetilde{M},\widetilde{%
\nabla })$ is a $2q$-cocycle in $\widetilde{\Omega }_{ab}^{\ast }$, so by
Corollary \ref{co:cobord} it follows that $ev_{1}(\varpi )-ev_{0}(\varpi )$
is a coboundary in $(\Omega _{ab}^{\ast },d_{ab}^{\ast })$. On the other
hand, by the computation that we performed in \S \ref{fa:Chern}, the cocycle 
$\varpi $ equals the class in $\widetilde{\Omega }_{ab}^{2q}$ of the trace
of $\widetilde{\Gamma }=\partial \widetilde{\Lambda }+\widetilde{\Lambda }%
^{2}$. Since $\partial \widetilde{\Lambda }=(d\Lambda )t+\Lambda \varepsilon 
$ and $\widetilde{\Lambda }^{2}=t^{2}\Lambda ^{2}+0\varepsilon ,$ we have%
\begin{equation*}
\widetilde{\Gamma }^{q}=(\partial \widetilde{\Lambda }+\widetilde{\Lambda }%
^{2})^{q}=\left( (td\Lambda +t^{2}\Lambda ^{2})+\Lambda \varepsilon \right)
^{q}=(td\Lambda +t^{2}\Lambda ^{2})^{q}+\Lambda ^{\prime }\varepsilon ,
\end{equation*}%
where $\Lambda ^{\prime }:=\left( \lambda _{ij}^{\prime }\right) _{i,j\in I}$
is a certain matrix with $\lambda _{ij}^{\prime }\in {}_{x_{i}}\Omega
_{x_{j}}^{2q-1}.$ Hence $\varpi =\Tr\left( (td\Lambda +t^{2}\Lambda
^{2})^{q}\right) +\Tr(\Lambda ^{\prime })\varepsilon .$ By the definition of
the evaluation map, $ev_{a}(\varpi )=\Tr\left( ad^{1}\Lambda +a^{2}\Lambda
^{2})^{q}\right) .$ Thus, 
\begin{equation*}
\Ch^{q}(\mathcal{C}(\mathcal{I}),\nabla )=\Tr\left( (d^{1}\Lambda +\Lambda
^{2})^{q}\right) =ev_{1}\left( \varpi \right) -ev_{0}\left( \varpi \right) .
\end{equation*}%
We conclude that $\Ch^{q}(\mathcal{C}\left( \mathcal{I}\right) ,\nabla )$ is
a coboundary in $\Omega _{ab}^{2q}$, so the theorem is proved.
\end{proof}

\begin{fact}[The Chern classes.]
Let $M$ be a finitely generated projective $\mathcal{C}$-module, where $%
\mathcal{C}$ is a linear category over a field of characteristic zero. We
assume that $\Omega^{\ast}$ is a $DG$-category such that its homogeneous
component of degree zero equals $\mathcal{C}$. On $M$ we consider a
connection $\nabla$ on $M.$ Note that such a connection always exists, as $M$
is finitely generated and projective, so on $M$ we can take for instance the
Levi-Civita connection. We have just proved that the de Rham cohomology
class of $\Ch^{q}(M,\nabla )$ in $H^{2q}_{DR}(\mathcal{C},\Omega^{\ast})$
does not depend on $\nabla.$ We shall call this cohomology class the $q^{%
\text{th}}$ \emph{Chern class} of $M$, and we shall denote it by $\Ch%
^{q}(M,\Omega^{\ast})$.
\end{fact}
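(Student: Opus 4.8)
The statement is a well-definedness assertion: it packages the two preceding theorems, together with the existence of a connection, into the definition of the Chern class $\Ch^{q}(M,\Omega^{\ast})$. The plan is therefore to verify three things in turn: that at least one connection exists on $M$, that each $\Ch^{q}(M,\nabla)$ represents a cohomology class, and that this class is the same for every $\nabla$.

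First I would establish that the set of connections on $M$ is nonempty. Since $M$ is finitely generated and projective it admits dual bases $\{m_{i}\}_{i\in I}$ and $\{\varphi^{i}\}_{i\in I}$, and the construction of \S\ref{fa:ExConn} produces the Levi-Civita connection $\nabla^{LC}$ on $M$ explicitly via \eqref{ec:LC}. Thus $\Ch^{q}(M,\nabla)$ can be formed for at least one connection, and the definition is not vacuous.

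Next I would record that, for an arbitrary connection $\nabla$ on $M$, the computation of \S\ref{fa:Chern} exhibits $\Ch^{q}(M,\nabla)$ as the class in $\Omega^{2q}_{ab}$ of the $2q$-form $\omega^{q}(M,\nabla)$ of \eqref{ec:wq}. The cocycle theorem proved above---whose hypothesis that $2$ be invertible in $\Bbbk$ is subsumed by the standing assumption $\mathrm{char}\,\Bbbk=0$---guarantees that this $2q$-cochain is in fact a cocycle of $(\Omega^{\ast}_{ab},d^{\ast}_{ab})$. Hence $\Ch^{q}(M,\nabla)$ determines a bona fide element of $H^{2q}_{DR}(\mathcal{C},\Omega^{\ast})$. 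Finally, to see that this class is independent of $\nabla$, I would invoke the connection-independence theorem proved above: for any two connections $\nabla_{1},\nabla_{2}$ on $M$ the cocycles $\Ch^{q}(M,\nabla_{1})$ and $\Ch^{q}(M,\nabla_{2})$ differ by an element of $B^{2q}(\mathcal{C},\Omega^{\ast})$, again because $\mathrm{char}\,\Bbbk=0$. Consequently $\Ch^{q}(M,\nabla)+B^{2q}(\mathcal{C},\Omega^{\ast})$ does not depend on $\nabla$, and it is legitimate to denote this common cohomology class by $\Ch^{q}(M,\Omega^{\ast})$ and to call it the $q$-th Chern class of $M$.

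I do not anticipate any genuine obstacle, since all the substantive work is carried by the two theorems above. The only point deserving attention is the bookkeeping on characteristic hypotheses: the cocycle property uses that $2$ is invertible, while independence of the connection uses characteristic zero, and both are covered by the hypothesis $\mathrm{char}\,\Bbbk=0$ imposed in the statement, so no new restriction on $\Bbbk$ is introduced.
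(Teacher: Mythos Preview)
Your proposal is correct and matches the paper's approach exactly: this ``fact'' is a definitional subsection rather than a theorem with an independent proof, and the paper justifies the definition precisely by citing the existence of the Levi-Civita connection and the two preceding theorems (cocycle property and connection-independence), just as you do. Your additional remark about the characteristic hypotheses is accurate bookkeeping that the paper leaves implicit.
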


\begin{fact}[The Grothendieck group of a linear category.]
We keep the notation and the assumptions from the previous subsection. Let
us denote the isomorphism class of $M$ by $\left[ M\right] .$ The
Grothendieck group of $\mathcal{C}$ is, by definition, the quotient of the
free abelian group generated by the set $\{\left[ M\right] :M$ is finitely
generated projective$\}$ through the subgroup generated by the elements $%
\left[ M^{\prime }\right] +\left[ M^{\prime \prime }\right] -\left[
M^{\prime }\oplus M^{\prime \prime }\right] ,$ where $M^{\prime }$ and $%
M^{\prime \prime }$ are arbitrary finitely generated projective modules. We
shall denote the Grothendieck group of $\mathcal{C}$ by $K_{0}(\mathcal{C}).$

We can now prove the main result of this paper.
\end{fact}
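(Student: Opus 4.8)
The plan is to observe that the statement above is a \emph{definition}, so the only thing that genuinely requires verification is that the construction of $K_0(\mathcal{C})$ is well-posed: that the generating collection $\{[M]\}$ is actually a \emph{set}, that $\oplus$ induces a commutative monoid structure on isomorphism classes, and that the displayed quotient is exactly the group completion of this monoid. I would carry this out in three short steps.

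First I would check that the isomorphism classes of finitely generated projective right $\mathcal{C}$-modules form a set rather than a proper class; this is the only delicate point, and it is where the standing assumption that $\mathcal{C}$ is small is indispensable. By \S\ref{fa: fgp}, every finitely generated projective $M$ is a direct summand of $\mathcal{C}(\mathcal{I})$ for some finite family $\mathcal{I}=\{x_i\}_{i\in I}$ of objects, hence $M\cong\im(e)$ for an idempotent $e\in\End_{\mathcal{C}}(\mathcal{C}(\mathcal{I}))$. Since $\mathcal{C}_0$ is a set, the finite families of objects form a set, each space $\End_{\mathcal{C}}(\mathcal{C}(\mathcal{I}))$ is a set, and so its idempotents form a set; taking the union over all $\mathcal{I}$ exhibits a set of representatives for every isomorphism class. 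I would denote the resulting set of classes by $V(\mathcal{C})$.

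Second I would verify that $\oplus$ descends to $V(\mathcal{C})$: if $M\cong M'$ and $N\cong N'$ then $M\oplus N\cong M'\oplus N'$, and the direct sum is associative and commutative up to isomorphism with unit the zero module, so $(V(\mathcal{C}),\oplus,[0])$ is a commutative monoid. Here one uses that a direct summand of a finitely generated projective module is again finitely generated projective, and that a finite direct sum of such modules has the same property, so $V(\mathcal{C})$ is closed under $\oplus$. Third, the abelian group described in the statement — the free abelian group on $V(\mathcal{C})$ modulo the relations $[M']+[M'']-[M'\oplus M'']$ — is by construction the group completion of $(V(\mathcal{C}),\oplus)$, which exists for every commutative monoid; hence $K_0(\mathcal{C})$ is a well-defined abelian group.

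The main obstacle is precisely the set-theoretic point in the first step: without smallness one could not even form the free abelian group, so the reduction to idempotents in $\End_{\mathcal{C}}(\mathcal{C}(\mathcal{I}))$ is what must be handled with care, while everything else is routine monoid bookkeeping. I would finally note only that the closing sentence ``We can now prove the main result of this paper'' is not itself part of the definition but a transition: it announces the forthcoming theorem asserting that $[M]\mapsto\Ch^q(M,\Omega^\ast)$ factors through $K_0(\mathcal{C})$, whose proof rests on the additivity of $\Ch^q$ under direct sums together with the universal property of the group completion just recorded, and which I would defer to that theorem.
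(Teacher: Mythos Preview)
Your reading is correct: the block in question is purely a definition, and the paper offers no proof for it whatsoever --- it simply states the construction of $K_0(\mathcal{C})$ and moves on to the theorem. Your additional verification that the isomorphism classes form a set (via the idempotent description of finitely generated projectives and the smallness of $\mathcal{C}$), and that $\oplus$ makes them into a commutative monoid whose group completion is the displayed quotient, is all correct but goes beyond what the paper records; the authors take these points for granted. Your closing remark about the transitional sentence is also accurate: the actual content is deferred to the subsequent theorem, whose proof in the paper is exactly the additivity argument you anticipate.
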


\begin{theorem}
Let $\mathcal{C}$ be a $\Bbbk $-linear category. We assume that $\Bbbk $ is
a field of characteristic zero and that $\Omega ^{\ast }$ is a $DG$-category
such that $\Omega ^{0}=\mathcal{C}.$ The mapping $\left[ M\right] \mapsto \Ch%
^{2q}(M)$ induces a morphism of groups from $K_{0}(\mathcal{C})$ to $%
H_{DR}^{\ast }(\mathcal{C},\Omega ^{\ast }).$
\end{theorem}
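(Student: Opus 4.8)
The plan is to show that the assignment $M\mapsto\Ch^{q}(M,\Omega^{\ast})$ passes the two tests built into the definition of $K_0(\mathcal{C})$: it depends only on the isomorphism class of $M$, and it is additive on finite direct sums. Granting this, the universal property of the free abelian group on the set of isomorphism classes of finitely generated projective right $\mathcal{C}$-modules produces, for each fixed $q$, a group homomorphism into $H_{DR}^{2q}(\mathcal{C},\Omega^{\ast})$; additivity then forces every generator $[M']+[M'']-[M'\oplus M'']$ of the relation subgroup into its kernel, so the homomorphism descends to $K_0(\mathcal{C})$. Throughout I use the two Theorems already proved: that $\Ch^{q}(M,\nabla)$ is a $2q$-cocycle, and that its cohomology class $\Ch^{q}(M,\Omega^{\ast})$ does not depend on the connection $\nabla$.

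\emph{Isomorphism invariance.} Let $u:M\to M'$ be an isomorphism in $\Mod$-$\mathcal{C}$ and let $\nabla$ be a connection on $M$. One checks directly against \eqref{ec:conn} that $\nabla':=(u\otimes_{\mathcal{C}}\Id_{\Omega^{1}})\circ\nabla\circ u^{-1}$ is a connection on $M'$, and that $u$ intertwines $R(\nabla)^{q}$ with $R(\nabla')^{q}$. Since the Hattori--Stallings trace defined in \S\ref{fa: fgp} satisfies $\Tr(v\circ w)=\Tr(w\circ v)$, conjugation by an isomorphism leaves it unchanged, so $\Ch^{q}(M,\nabla)=\Ch^{q}(M',\nabla')$ already at the level of cochains; in particular $\Ch^{q}(M,\Omega^{\ast})=\Ch^{q}(M',\Omega^{\ast})$. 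Combined with connection-independence, this shows that $[M]\mapsto\Ch^{q}(M,\Omega^{\ast})$ is a well-defined function on isomorphism classes.

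\emph{Additivity.} Fix finitely generated projective modules $M'$, $M''$ with connections $\nabla'$, $\nabla''$, and form $\nabla'\oplus\nabla''$ on $M'\oplus M''$ by Lemma~\ref{le:conn}(i). Choosing dual bases on $M'$ and on $M''$, their union --- the generators viewed in $M'\oplus M''$ through the canonical inclusions, the coordinate functionals precomposed with the canonical projections --- is a pair of dual bases on $M'\oplus M''$. With respect to it the connection matrix of $\nabla'\oplus\nabla''$ is block-diagonal with diagonal blocks the matrices $\Lambda'$, $\Lambda''$ of $\nabla'$, $\nabla''$; hence so is the curvature matrix $\Gamma=d\Lambda+\Lambda^{2}$. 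Substituting this block structure into the explicit formula \eqref{ec:wq}, every mixed term vanishes, because a cyclic product $\varphi_{x_{i_{1}}}^{i_{0}}(m_{i_{1}})\,\gamma_{i_{1}i_{2}}\cdots\gamma_{i_{q}i_{0}}$ is zero unless all of its indices lie in a single block; one obtains the identity
\begin{equation*}
\omega^{q}(M'\oplus M'',\nabla'\oplus\nabla'')=\omega^{q}(M',\nabla')+\omega^{q}(M'',\nabla''),
\end{equation*}
which is exactly the one used (and called obvious) in the proof that $\Ch^{q}(M,\nabla)$ is a cocycle. Passing to classes in $\Omega^{2q}_{ab}$, then to cohomology, and invoking connection-independence once more, gives $\Ch^{q}(M'\oplus M'',\Omega^{\ast})=\Ch^{q}(M',\Omega^{\ast})+\Ch^{q}(M'',\Omega^{\ast})$; taking $M'=M''=0$ shows the zero module maps to $0$.

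Assembling these two properties as in the first paragraph proves the theorem. The only step that is more than bookkeeping is the additivity computation --- verifying that the union of dual bases is again a pair of dual bases, that the connection and curvature matrices on the direct sum are genuinely block-diagonal, and that the off-diagonal contributions to \eqref{ec:wq} really cancel. I expect the care to be needed there; the final descent through the Grothendieck-group relations is then automatic.
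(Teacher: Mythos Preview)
Your proof is correct and follows the same route as the paper: establish additivity by computing $\Ch^{q}(M'\oplus M'',\nabla'\oplus\nabla'')=\Ch^{q}(M',\nabla')+\Ch^{q}(M'',\nabla'')$ via the block-diagonal connection and curvature matrices, then invoke connection-independence. You are in fact more careful than the paper, which leaves isomorphism invariance implicit and states the additivity identity without the block-diagonal justification you supply.
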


\begin{proof}
Let $M^{\prime }$ and $M^{\prime \prime }$ be two finitely generated
projective $\mathcal{C}$-module. Let $\nabla ^{\prime }$ and $\nabla
^{\prime \prime }$ be connections on $M^{\prime }$ and $M^{\prime \prime },$
respectively. We have seen that there is a unique connection $\nabla
^{\prime }\oplus \nabla ^{\prime \prime }$ on $M^{\prime }\oplus M^{\prime
\prime },$ such that its restrictions to $M^{\prime }$ and $M^{\prime \prime
}$ coincide with $\nabla ^{\prime }$ and $\nabla ^{\prime \prime },$
respectively. Since the definition of the Chern class $\Ch^{q}(M^{\prime
}\oplus M^{\prime \prime })$ does not depend on the connection, and 
\begin{equation*}
\Ch^{q}(M^{\prime }\oplus M^{\prime \prime },\nabla^{\prime }\oplus
\nabla^{\prime \prime }=\Ch^{q}(M^{\prime },\nabla^{\prime })+\Ch%
^{q}(M^{\prime \prime },\nabla^{\prime \prime }),
\end{equation*}%
it follows that $\Ch^{q}(M^{\prime }\oplus M^{\prime \prime })=\Ch%
^{q}(M^{\prime })+\Ch^{q}(M^{\prime \prime }).$ Hence the theorem is proved.
\end{proof}

\noindent\textbf{Acknowledgement.} The authors of the paper were financially
supported by UEFISCDI, Contract 560/2009 (CNCSIS code ID\_69).

\end{document}